\newcommand\R{\mathbb{R}}
\newcommand\C{\mathbb{C}}
\newcommand\N{\mathbb{N}}
\newcommand\A{{\bf A}}
\newcommand\LL{\mathcal{L}}
\numberwithin{equation}{section}
\newtheorem{proposition}{Proposition}[section]
\newtheorem{lemma}{Lemma}[section]
\newtheorem{theorem}{Theorem}[section]
\newtheorem{remark}{Remark}[section]
\begin{document}
\title[Resolvent estimates for 2D-magnetic Schr\"odinger ]{Uniform resolvent estimates for critical magnetic Schr\"odinger operators in 2D}

\author{Luca Fanelli}
\address{Ikerbasque and Universidad del Pa\'is Vasco, Departamento de Matem\'aticas, Bilbao, Spain}
\email{luca.fanelli@ehu.es}

\author{Junyong Zhang}
\address{Department of Mathematics, Beijing Institute of Technology, Beijing 100081}
\email{zhang\_junyong@bit.edu.cn}

\author{Jiqiang Zheng}
\address{Institute of Applied Physics and Computational Mathematics, Beijing 100088}
\email{zhengjiqiang@gmail.com; zheng\_jiqiang@iapcm.ac.cn}

\begin{abstract}
We study the $L^p-L^q$-type uniform resolvent estimates for 2D-Schr\"odinger operators in scaling-critical magnetic fields, involving the Aharonov-Bohm model as a main example. As an application, we prove localization estimates for the eigenvalue of some non self-adjoint zero-order perturbations of the magnetic Hamiltonian.

\end{abstract}


\maketitle 

\section{Introduction}

In dimension $n=2$, let us consider a scaling invariant magnetic Schr\"odinger operator of the form
\begin{equation}\label{LAa}
\mathcal{L}_{{\A},a}=\Big(i\nabla+\frac{{\A}(\hat{x})}{|x|}\Big)^2+\frac{a(\hat{x})}{|x|^2},
\end{equation}
where $\hat{x}=\tfrac{x}{|x|}\in\mathbb{S}^1$, $a\in W^{1,\infty}(\mathbb{S}^{1},\mathbb{R})$ and ${\A}\in W^{1,\infty}(\mathbb{S}^1;\R^2)$  satisfies the transversality condition
\begin{equation}\label{eq:transversal}
{\A}(\hat{x})\cdot\hat{x}=0,
\qquad
\text{for all }x\in\R^2.
\end{equation}
We recently started a program involving the above operators.
 In  \cite{FZZ},  the authors proved
the Strichartz estimates for wave; and later  in \cite{GYZZ}, together with Gao and Yin, the last two authors showed the Strichartz estimates for Klein-Gordon by constructing the spectral measure of the operator $\LL_{{\A},0}$. In \cite{GWZZ},
 we studied $L^2$-weighted resolvent estimates.
In this paper, based on the resolvent kernel constructed in \cite{GYZZ}, we aim to study  the $L^p\to L^q$ type uniform resolvent inequalities for the purely magnetic operator $\mathcal L_{\A,0}$ of the form
\begin{equation}\label{eq:unifmagn}
\|(\LL_{{\A},0}-\sigma)^{-1}\|_{L^q(\R^2)\to L^p(\R^2)}\leq C|\sigma|^{\frac1p-\frac1q-1},
\end{equation}
where $\sigma\in\C\setminus[0,\infty)$ and the constant $C$ is independent of $\sigma$.
\vspace{0.1cm}

The zero-energy and general dimension version of estimates \eqref{eq:unifmagn}  are related to the classical Hardy-Littlewood-Sobolev inequality, which by Fourier transform is equivalent to the Sobolev embedding
\begin{equation}
\|(-\Delta)^{-\frac s2} u\|_{L^q(\R^n)}\lesssim \|u\|_{L^p(\R^n)},\quad \frac1p=\frac1q+\frac sn,\quad 1<q,r<\infty,\quad 0<s<n,
\end{equation}
or equivalently
$$\|u\|_{L^q(\R^n)}\lesssim \|(-\Delta)^{\frac s2} u\|_{L^p(\R^n)}.$$
Here the symbol $\lesssim$ means as usual that in the right-hand side of the inequality a multiplicative constant is hidden, which does not depend on $u$.
In particular, when $s=2$, this inequality can be regarded as the following resolvent estimate
$$\|(-\Delta-\sigma)^{-1} u\|_{L^q(\R^n)}\leq  C(\sigma)\|u\|_{L^p(\R^n)}$$
at the zero energy $\sigma=0$, where $C(\sigma)$ is positive dimensional constant depending on $\sigma$. For non-zero energies, we have the following bound
\begin{equation}\label{uS}
\left\|(-\Delta-\sigma)^{-1}u\right\|_{L^{q} (\R^n)}\leq
C|\sigma|^{\frac n2(\frac1p-\frac1q)-1}\|u\|_{L^{p}(\R^n)},\quad \sigma\in\C\setminus[0,\infty),
\end{equation}
where $n\ge3$ and $(p,q)$ satisfies
\begin{align*}\label{p_q_0}
\frac{2}{n+1}\le \frac1p-\frac1q\le \frac2n,\ \frac{2n}{n+3}<p<\frac{2n}{n+1},\ \frac{2n}{n-1}<q<\frac{2n}{n-3}.
\end{align*}

In this free context ${\A}\equiv0$, the above result appears first in the seminal paper by Kenig-Ruiz-Sogge \cite{KRS} (in the case $q=p'$, $n\geq3$), and later in Gutierrez for the complete range of $p,q$, and again $n\geq3$ (we also mention the paper \cite{CS, Gut}, as well as the well known paper by Kato-Yajima \cite{KY} and the references therein for a more general picture). Later, Frank noticed in \cite{Fr} that the same holds in dimension $n=2$, away from the endpoint, and used this to prove eigenvalue bounds for zero-order perturbations.
Again in dimension $n=2$, more recently Ev\'equoz \cite{Ev} applied the method in \cite{Gut}
to show the uniform resolvent estimate \eqref{uS}
  provided that $(1/p,1/q)$ is contained in the pentagon (see Figure 1)
\begin{equation}\label{pq2}
	\{(x,y): 2/3\le x-y <1, \, 3/4<x\le1, \, 0\le y<1/4 \}.	
\end{equation}	

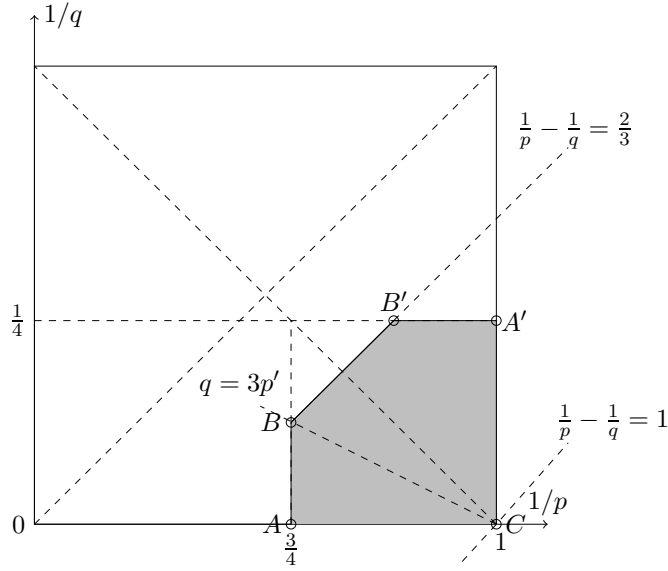
\begin{figure}[htbp]\label{figure1}
\begin{center}
\scalebox{0.9}[0.9]{
\begin{tikzpicture}
\draw (0,0) rectangle (6.75,6.75);
\draw[->]  (0,0) -- (0,7.5);
\draw[->]  (0,0) -- (7.5,0);
\filldraw[fill=gray!50](3.75,0)--(3.75,1.5)--(5.25,3)--(6.75,3)--(6.75,0); 
\draw (6.75,0) node[below] {$\ 1$};
\draw (0,0) node[below, left] {$0$};
\draw (7.5,0) node[above] {$1/p$};
\draw (0,7.5) node[right] {$1/q$};
\draw[dashed] (0,0) -- (6.75,6.75);
\draw[dashed] (6.75,0) -- (0,6.75);
\draw[dashed] (6.25,-0.55)--(6.75,0)  -- (7.8,1.2);
\draw (7.5,1.5) node[right] {$\frac1p-\frac1q=1$};
\draw[dashed] (5.25,3) -- (7.8,5.55);
\draw (7.9,5.4) node[above] {$\frac1p-\frac1q=\frac{2}{3}$};
\draw (3.0,1.76) node[above] {$q=3p'$};
\draw[dashed] (3.75,1.5) -- (3.75,3.0) -- (5.25,3.0);
\draw (3.75,0) circle (2pt) node[left] {$A$};    
\draw (6.75,3)  circle (2pt) node[right] {$\!A'$}; 
\draw (3.75,1.5)  circle (2pt) node[left] {$B$};   
\draw (5.25,3)  circle (2pt) node[above] {$B'$}; 
\draw (6.75,0)  circle (2pt) node[right] {$C$};
\draw[dashed] (6.75,0) -- (3.3,1.74);
\draw[dashed] (3.75,0) -- (3.75,1.5) ; 
\draw[dashed] (5.25,3) -- (6.75,3) ; 
\draw (3.75,1.5) -- (5.25,3); 
\draw (3.75,0) node[below] {$\frac{3}{4}$};
\draw (0,3) node[left] {$\frac{1}{4}$};
\draw[dashed] (0,3) -- (3.75,3.0);
\end{tikzpicture}
}
\end{center}
\caption{Here $A=(\frac{3}{4},0)$, $B=(\frac{3}{4},\frac{1}{12})$, and $A',B'$ are dual points of $A,B$, respectively.}
\end{figure}

By the scaling $u(z)\mapsto u(|\sigma|^{1/2}z)$, we see that to prove \eqref{uS} it is sufficient to prove
the case $|\sigma|=1, \sigma\neq 1$. In the case $\frac1p-\frac1q=\frac2n$, the constant of the estimate is uniform in $\sigma$, and \eqref{uS} is equivalent to
\begin{equation}\label{uS'}
\|u\|_{L^q(\R^n)}\leq  C\|(-\Delta-\sigma) u\|_{L^p(\R^n)},  \quad |\sigma|=1, \sigma\neq 1.
\end{equation}
This is why, when  $\frac1p-\frac1q=\frac2n$, \eqref{uS} is usually called the \emph{uniform Sobolev inequality}.
Notice that in dimension $n=2$, which is the one studied in this manuscript, the uniform Sobolev inequality (as well as the Sobolev inequality) fails (it is the low-right corner $\frac1p-\frac1q=1$ in Figure 1), due to the logaritmic singularity in the kernel of the Green function.
We also stress that  \eqref{pq2} is known to be sharp, see \cite{Gut,KRS,KL}.

It is a quite natural problem to ask whether the analogous uniform resolvent estimates hold for the Schr\"odinger operator with magnetic or electric potentials.
 There is an extensive  literature (see e.g.  \cite{RV, Kato} and the references therein for an obviously incomplete picture) on the resolvent estimates of the Schr\"odinger operator with purely electric potential
\begin{equation}
\LL_V=-\Delta+V(x).
\end{equation}
It was observed that the uniform resolvent estimates play a fundamental role in the establishment of time-decay estimates or Strichartz estimates, we refer to \cite{Kato, JSS, RS, GS, JK}.
For electromagnetic or magnetic cases, there are some results proved by Garcia \cite{Gar1, Gar2} and Cuenin-Kenig \cite{CK16}.
 In all the above papers, the properties of the potentials about the smooth regularity and decay conditions at infinity are key to this problem.
On one hand, the potentials can usually be quite rough, but have sufficient decay at infinity. On the other hand, unbounded potentials are allowed but they are usually assumed to be smooth. The general philosophy to prove resolvent estimates in $L^p$ spaces is to combine the $L^p$ estimates for the free resolvent with weighted $L^2$ estimates.\vspace{0.2cm}

In this paper, we focus on purely magnetic potentials that are at critical with respect to the scaling of the free operator. It worths to remark that
most of results \cite{Gar1,Gar2, CK16} needed that the electric potential decays at infinity as $|x|^{-2-\epsilon}$ and the magnetic potential
$|x|^{-1-\epsilon}$ with $\epsilon>0$.
The inverse-square potential $V(x)=a(\hat{x})|x|^{-2}$ with $\hat{x}=\frac{x}{|x|}$ is a typical scaling-critical electric potential; and any homogeneous vector field of degree -1 is a scaling critical magnetic potential.
In \cite{BPST},  Burq-Planchon-Stalker-Tahvildar-Zadeh  proved
the $L^2$-weighted resolvent estimates $$\||x|^{-1}((-\Delta+a(\hat{x})|x|^{-2})-\sigma)^{-1}|x|^{-1}\|_{L^2(\R^n)\to L^2(\R^n)}\leq C$$ hold uniformly in $\sigma$, and then proved local smoothing estimates and Strichartz estimates.
Recently,  Bouclet-Mizutani \cite{BM1} proved
the weighted resolvent estimates $\|w(\LL_V-\sigma)^{-1}w^*\|_{L^2\to L^2}$ uniformly in $\sigma$
where $w$ is in a large class of weight functions in Morrey-Campanato spaces.
The assumption on potential $V(x)$ in \cite{BM1}  covers the scaling critical inverse-square type potentials $a|x|^{-2}$.
Later,  Mizutani \cite{Miz} generalized the  $L^2$ weighted resolvent estimates to  the $L^p-L^q$ type uniform Sobolev estimates \eqref{uS},
but one needs  that $V\in L^{\frac n2}(\R^n)$ with $n\geq3$ which does not include the inverse-square potential $a|x|^{-2}$.
Very recently, Mizutani and the last two authors \cite{MZZ}
proved the uniform resolvent estimates \eqref{uS} which recovers the scaling critical inverse-square type potentials
and the dimension $n\geq3$. The new ingredient of \cite{MZZ} was to prove a generalized $L^2$ weighted resolvent estimates
$$\||x|^{-s}((-\Delta+a(\hat{x})|x|^{-2})-\sigma)^{-1}|x|^{-(2-s)}\|_{L^2(\R^n)\to L^2(\R^n)}\leq C,$$
hold for some $s$, hence one can obtain  $L^p-L^q$ type uniform resolvent estimates. The general philosophy, mentioned above, combining the resolvent estimates of $-\Delta$ in $L^p$ spaces
and weighted $L^2$ estimates works for $n\geq 3$ in \cite{MZZ} but fails when $n=2$.

 \vspace{0.2cm}

 In \cite{GWZZ}, Gao, Wang and the last two author considered the electromagnetic operator $\LL_{{\A},a}$ in $\R^2$ given by \eqref{LAa} and prove the following weighted $L^2$-estimates
\begin{equation}\label{w-L2}
\||x|^{-s}(\LL_{{\A},a}-\sigma)^{-1}|x|^{-(2-s)}\|_{L^2(\R^2)\to L^2(\R^2)}\leq C,
\end{equation}
where $s\in (1-\mu_0,1+\mu_0)$ with $\mu_0$ depending on ${\A}$ and $a$.
 In particular, the diamagnetic phenomenon allows to consider negative electric potential which can be singular in the same fashion as the inverse-square potential. It is worth pointing out that the model $\LL_{{\A},a}$ is in dimension two in which \eqref{w-L2} fails even for $s=1$ when
 ${\bf A}=0$ and $a=0$ since the resonance occurs at zero energy  in this case.

\vspace{0.2cm}

We focus on the purely magnetic case and
 we  study the validity of the uniform resolvent estimates \eqref{uS} for
 the magnetic Schr\"odinger operator $\LL_{{\A},0}$ in $\R^2$. The main example is given by the Aharonov-Bohm potential
\begin{equation}\label{ab-potential}
a\equiv0,
\qquad
{\A}(\hat{x})=\alpha\Big(-\frac{x_2}{|x|},\frac{x_1}{|x|}\Big),\quad \alpha\in\R,
\end{equation}
introduced in \cite{AB}, as an well known model describing some scattering phenomenon arising in regions where the magnetic field is not present.
Notice that the singularity of the Aharonov-Bohm potential is not locally in $L^2$, which makes the analysis of this operator peculiar, compared to the usual theory for magnetic Schr\"odinger Hamiltonians (see \cite{RS}). On the other hand, the scaling invariance helps from the point of view of Fourier Analysis and permits to perform explicit calculus for generalized eigenfunctions, as we shall see in the sequel.

 \vspace{0.2cm}

Our first main result is the following.
\begin{theorem}\label{thm:LA0} Let $1\leq p<\frac43$ and $4<q\leq \infty$ satisfy $\frac23\leq \frac1p-\frac1q<1$ and let $\LL_{{\A},0}$
be \eqref{LAa} with $a\equiv0$.
Then, for $\sigma\in\C\setminus [0,+\infty)$, there exists a constant $C=C(p,q)>0$ such that
\begin{equation}\label{est:resa-z}
\|(\LL_{{\A},0}-\sigma)^{-1}f\|_{L^q(\R^2)}\leq C|\sigma|^{\frac1p-\frac1q-1}\|f\|_{L^p(\R^2)}.
\end{equation}
In particular, if $6\leq q<\infty$, then \eqref{est:resa-z} holds for $q=p'$.
\end{theorem}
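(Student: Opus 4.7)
The plan is to combine a scaling reduction, the resolvent kernel constructed in \cite{GYZZ}, a Stein--Tomas type restriction estimate for the spectral measure of $\sqrt{\LL_{\A,0}}$, and a complex interpolation with the weighted $L^2$-bound \eqref{w-L2} of \cite{GWZZ} (taken at $a\equiv 0$). Since $\LL_{\A,0}$ is homogeneous of degree $-2$, the change of variables $u(x)\mapsto u(|\sigma|^{-1/2} x)$ reduces \eqref{est:resa-z} to the case $|\sigma|=1$, $\sigma\notin[0,\infty)$.

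Next, I would use the spectral theorem to represent
\[
(\LL_{\A,0}-\sigma)^{-1} = \int_0^\infty \frac{1}{\lambda^2-\sigma}\,dE_{\sqrt{\LL_{\A,0}}}(\lambda),
\]
together with the kernel computed in \cite{GYZZ}, in which the spectral density separates into angular modes:
\[
dE_{\sqrt{\LL_{\A,0}}}(\lambda)(x,y) = \lambda \sum_{k} \phi_k(\hat x)\overline{\phi_k(\hat y)}\, J_{\nu_k}(\lambda |x|)\, J_{\nu_k}(\lambda |y|),
\]
where $\{\phi_k\}$ is an orthonormal basis of eigenfunctions of the angular operator $L_\A := (i\pa_\theta + \A(\theta))^2$ on $\mathbb{S}^1$ and $\nu_k>0$ is the square root of the $k$-th eigenvalue of $L_\A$.

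The core analytic step is to prove, uniformly in $\lambda>0$, the restriction-type estimate
\[
\|dE_{\sqrt{\LL_{\A,0}}}(\lambda)\|_{L^p(\R^2)\to L^q(\R^2)}\leq C\lambda^{2(1/p-1/q)-1}
\]
at the Stein--Tomas endpoint $(1/p,1/q)=(3/4,1/12)$ (and dually at $(11/12,1/4)$). Via a $TT^*$ argument combined with the uniform large-order asymptotics of Bessel functions across the transition region $|x|\sim\nu_k/\lambda$, this reduces to a magnetic analogue of the Stein--Tomas restriction to $\mathbb{S}^1\subset\R^2$: the orthogonality of the $\phi_k$'s controls the sum in $k$, while $\A$ enters only through the shifted orders $\nu_k$. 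Once this restriction bound is in place, integrating against the multiplier $(\lambda^2-\sigma)^{-1}$ yields \eqref{est:resa-z} at the vertex $(3/4,1/12)$ of the pentagon.

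Finally, the full pentagon follows by complex interpolation. Stein's analytic interpolation applied to the family $z\mapsto (\LL_{\A,0}-\sigma)^{-z}$ between the Stein--Tomas endpoint just proved and the weighted estimate \eqref{w-L2} (which, after removing the weights by H\"older, yields a bound at a point on the antidiagonal $1/p+1/q=1$) produces the interior of the pentagon; the resolvent duality $((\LL_{\A,0}-\sigma)^{-1})^*=(\LL_{\A,0}-\overline{\sigma})^{-1}$ then covers the reflected half. The principal obstacle is the $\lambda$-uniform restriction estimate above: the infinite sum over angular modes must be dominated with constants independent of $\lambda$, and the logarithmic failure of the 2D free Green kernel at the diagonal is precisely what forces the exclusion of the endpoint line $1/p-1/q=1$, pinning down the shape of the pentagon.
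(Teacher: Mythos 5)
Your overall scheme (scaling to $|\sigma|=1$, a Stein--Tomas vertex estimate, then filling out the pentagon) shares some spirit with the paper, but the three concrete mechanisms you propose each run into a serious obstruction, and the paper deliberately avoids all three.

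First, you propose to prove the restriction estimate by expanding the spectral measure in angular modes with Bessel kernels $J_{\nu_k}(\lambda r)$. The obstruction here is the sum over $k$: ``orthogonality of the $\phi_k$'' controls the sum only in $L^2_\theta$, not in $L^p$ for $p\neq 2$, and the transition region $r\sim\nu_k/\lambda$ moves with $k$, so you would be summing kernels with moving singularities and cannot simply dominate them. The paper bypasses this entirely by working with the \emph{closed-form} kernel from \cite{GYZZ} (Proposition \ref{prop:res-ker}), in which the modal sum has already been performed and the result splits into a geometric piece with factor $A_\alpha(\theta_1,\theta_2)$ and a diffractive piece with factor $B_\alpha(s,\theta_1,\theta_2)$; the $L^p\to L^q$ estimates (Lemmas \ref{lem:TGj} and \ref{lem:TDj}) are carried out directly on these kernels via a dyadic decomposition in $|x-y|$ (resp.\ $r_1+r_2$) and oscillatory integral bounds.

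Second, you assert that ``integrating against the multiplier $(\lambda^2-\sigma)^{-1}$'' passes from a spectral measure bound to a resolvent bound. When $\sigma$ approaches $[0,\infty)$ the factor $(\lambda^2-\sigma)^{-1}$ is not absolutely integrable, so this does not follow from the restriction estimate alone; this is precisely the limiting absorption difficulty. The paper splits $\sigma$ into three regimes: for $\sigma$ bounded away from $[0,\infty)$ the kernel bounds of Lemma \ref{lem:apmest} plus Young's inequality suffice, and for $\sigma$ near the positive real axis the argument is rerouted through Theorem \ref{thm:LA0'} (the $\lambda\pm i0$ resolvent), where the oscillation is exploited in the explicit kernel, not in the spectral multiplier.

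Third, and most importantly, your proposed interpolation with the weighted $L^2$ bound \eqref{w-L2} is exactly the strategy the paper rules out: ``the general philosophy\dots combining the resolvent estimates\dots in $L^p$ spaces and weighted $L^2$ estimates works for $n\geq 3$ in \cite{MZZ} but fails when $n=2$.'' Concretely, ``removing the weights by H\"older'' is not possible because $|x|^{\pm s}$ is not in any global Lebesgue space on $\R^2$, and the endpoint $1/p-1/q=1$ (which would be needed as an interpolation anchor) genuinely fails because of the logarithmic singularity of the 2D Green kernel. Instead, the paper fills the pentagon by \emph{real} interpolation (weak-type $L^{p,1}\to L^{q,\infty}$ bounds on the edge $1/p-1/q=2/3$, via the dyadic operators $T_{G_1}^j$), together with duality, in the style of Guti\'errez \cite{Gut}; no weighted $L^2$ input is used. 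You would need to replace your complex-interpolation step by this Lorentz-space argument (or something equivalent) for the proof to close.
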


\begin{remark}
It would be interesting to prove \eqref{est:resa-z} for $\LL_{{\A},a}$. However,
the resolvent representation was constructed in \cite{GYZZ}  to capture both the oscillation and decay properties only for the case with pure magnetic potential, that is, $a\equiv0$. It would be a bit delicate to recover the effect of the electric potential $a(\hat{x})|x|^{-2}$ in the explicitly construction.
\end{remark}

\begin{remark}
The perturbation argument in \cite{MZZ} to treat the inverse-square potential  breaks down since  the uniform Sobolev inequality $\frac1p-\frac1q=1$ in $\R^2$ fails, as in the free case. This is due to the presence of a logaritmic singularity in the kernel of the Green function, which also appear in the Aharonov-Bohm case (see \eqref{est:b} and \eqref{eq:log1} below).
\end{remark}

The proof of Theorem \ref{thm:LA0} follows from the ideas of Guti\'errez \cite{Gut} and Kenig-Ruiz-Sogge \cite{KRS}.
But we need to establish the Stein-Tomas restriction estimates associated with the variable coefficient operator $\LL_{{\A},0}$ (perturbated by
scaling critical magnetic potentials) which
has its own independent interesting. Since the operator $\LL_{{\A},0}$ is conically singular, we are in spirit of \cite{BFM18} (in which Blair-Ford-Marzuola studied the cluster estimates for a conical singular
Schr\"odinger operator) to prove the Stein-Tomas restriction estimates.
Based on the kernel of resolvent constructed in \cite{GYZZ},
due to the diffractive effect, the kernel is more singular and different from the kernel in the Euclidean setting.
In contrast to the estimates in weight $L^2$ space, the establishment of $L^p\to L^q$-type estimates needs to efficiently exploit the oscillation behavior of the kernel,
and hence we have to modify the celebrating oscillatory integral theory of Stein \cite{Stein} and H\"ormander \cite{Hor} to adapt to the singular kernel.\vspace{0.2cm}

%

The most usual applications of results like Theorem \ref{thm:LA0} are concerned with spectral theory. Uniform resolvent estimates are a fundamental tool in order to understand the point spectrum of Hamiltonians of the form
\begin{equation}\label{eq:complex}
\mathcal L_{\A,V}:=\LL_{{\A},0} + V,
\end{equation}
being $V:\R^2\to\C$ a (complex valued) measurable function. These non self-adjoint perturbations of self-adjoint Hamiltonians has been intensively studied in the last decades, due to their interest in Quantum Mechanics (see e.g. \cite{BB, SGY}). In our case, we consider the Aharonov-Bohm Schr\"odinger operator $\LL_{{\A},0}$ as the free Hamiltonian, and $V$ as the perturbation. A first natural question is concerned with the absence of eigenvalues for $\mathcal L_{\A,V}:=\LL_{{\A},0} + V$, when $V$ is sufficiently small in some suitable sense. Notice that in the magnetic-free case the question is critical in dimension $n=2$, due to the lack of Hardy's inequality. In other words, even a compactly supported potential $V$ can create eigenvalues in the spectrum of $-\Delta+V$. On the other hand, due to the presence of the Aharonov-Bohm field, a Hardy inequality holds when the circulation $\alpha$ is not an integer number (see \cite{LW}), hence the above question makes sense. An answer to this question is given in \cite{CFK, FKV2}, where weighted $L^2$ estimates come into play. In our play, we would like to use the same arguments as in Theorem 2 by \cite{Fr}, but unfortunately they strongly rely on the uniform Sobolev estimate, which in our case is false, as remarked above.

On the other hand, another natural problem is concerned with localization estimates for eigenvalues of $\mathcal L_{\A,V}$, in terms of integral norms of $V$. Following Frank (cfr. \cite[Theorem 1]{Fr}), we prove the following application of Theorem \ref{thm:LA0}.

\begin{theorem}\label{thm:frank}
Let $0<\gamma\leq\frac12$, and let $\lambda\in\C\setminus[0,\infty)$ be an eigenvalue of $\mathcal L_{\A,V}$ given in \eqref{eq:complex}. Then,
if the complex potential $V\in L^{\gamma+1}(\R^2)$, the following estimate
\begin{equation}\label{eq:eigenbound}
|\lambda|^{\gamma}\leq C_\gamma\int_{\R^2}|V(x)|^{\gamma+1}\,dx
\end{equation}
holds, with a positive constant $C_\gamma$ only depending on $\gamma$.
\end{theorem}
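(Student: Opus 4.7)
The strategy is to follow Frank's Birman-Schwinger argument (cfr. \cite[Theorem 1]{Fr}), with our Theorem \ref{thm:LA0} playing the role of the uniform Sobolev inequality. Factorize $V=W_1W_2$ with $W_1(x):=\operatorname{sgn}(V(x))|V(x)|^{1/2}$ and $W_2(x):=|V(x)|^{1/2}$, and introduce the Birman-Schwinger operator
\[
K_\lambda := W_2\,(\LL_{\A,0}-\lambda)^{-1}\,W_1.
\]
Since $\lambda\in\C\setminus[0,\infty)$ lies in the resolvent set of the self-adjoint operator $\LL_{\A,0}$, the expression $K_\lambda$ makes sense as a bounded operator on $L^2(\R^2)$, with norm controlled in the next paragraph. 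The first task is to show that if $\psi$ is an eigenfunction of $\mathcal L_{\A,V}$ at $\lambda$, then $\phi := W_2\psi$ is a nonzero element of $L^2(\R^2)$ solving $K_\lambda\phi=-\phi$, so that $\|K_\lambda\|_{L^2\to L^2}\geq 1$. Nontriviality of $\phi$ follows from the observation that $W_2\psi\equiv 0$ would force $V\psi\equiv 0$, turning $\psi$ into an $L^2$-eigenfunction of $\LL_{\A,0}$, which is impossible since $\sigma(\LL_{\A,0})=[0,\infty)\not\ni\lambda$.

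The second task is the quantitative bound on $\|K_\lambda\|_{L^2\to L^2}$. Choose
\[
p := \frac{2(\gamma+1)}{\gamma+2}, \qquad q := p' = \frac{2(\gamma+1)}{\gamma},
\]
so that $\tfrac1p-\tfrac1q = \tfrac{1}{\gamma+1}$. As $\gamma$ varies in $(0,\tfrac12]$, this difference ranges over $[\tfrac23,1)$ while $q\in[6,\infty)$, placing $(p,q)$ exactly on the segment of the diagonal $q=p'$ covered by the endpoint clause of Theorem \ref{thm:LA0}. Two applications of H\"older's inequality, with exponents chosen so that $W_1,W_2\in L^{2(\gamma+1)}$, yield
\[
\|W_1f\|_{L^p}\le \bigl\||V|^{1/2}\bigr\|_{L^{2(\gamma+1)}}\|f\|_{L^2},\qquad \|W_2g\|_{L^2}\le \bigl\||V|^{1/2}\bigr\|_{L^{2(\gamma+1)}}\|g\|_{L^q},
\]
and sandwiching Theorem \ref{thm:LA0} between these produces
\[
\|K_\lambda f\|_{L^2}\le C\,|\lambda|^{\frac1p-\frac1q-1}\,\bigl\||V|^{1/2}\bigr\|_{L^{2(\gamma+1)}}^{2}\,\|f\|_{L^2} = C\,|\lambda|^{-\frac{\gamma}{\gamma+1}}\,\|V\|_{L^{\gamma+1}}\,\|f\|_{L^2},
\]
using $\bigl\||V|^{1/2}\bigr\|_{L^{2(\gamma+1)}}^{2}=\|V\|_{L^{\gamma+1}}$. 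Combined with $\|K_\lambda\|_{L^2\to L^2}\ge 1$, this gives $|\lambda|^{\gamma/(\gamma+1)}\le C\,\|V\|_{L^{\gamma+1}}$, and raising to the $(\gamma+1)$-th power produces \eqref{eq:eigenbound}.

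The delicate step is the rigorous derivation of the Birman-Schwinger identity $K_\lambda\phi=-\phi$ from the eigenvalue equation, that is, the verification that $V\psi\in L^1_{\mathrm{loc}}(\R^2)$ and $\phi=W_2\psi\in L^2(\R^2)$. Starting from the a priori fact that any eigenfunction of $\mathcal L_{\A,V}$ lies in $L^2(\R^2)$, a bootstrap based on the identity $\psi=-(\LL_{\A,0}-\lambda)^{-1}(V\psi)$, combined with H\"older's inequality (using $V\in L^{\gamma+1}$) and Theorem \ref{thm:LA0}, upgrades the integrability of $\psi$ to $L^q$ for $q=2(\gamma+1)/\gamma$, whence $\phi\in L^2$ by one more application of H\"older. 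Once this a priori regularity is secured, the identity $K_\lambda\phi=-\phi$ becomes an unambiguous computation in $L^2$, and the quantitative bound above concludes the proof.
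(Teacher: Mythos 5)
Your quantitative argument agrees with the paper's intended proof (and with Frank's original one): factorize $V = W_1 W_2$ with $|W_i| = |V|^{1/2}$, bound the Birman--Schwinger operator $K_\lambda = W_2(\LL_{\A,0} - \lambda)^{-1} W_1$ on $L^2$ by sandwiching Theorem~\ref{thm:LA0} on the diagonal $q=p'$, $q\in[6,\infty)$, between two H\"older applications, and combine with $\|K_\lambda\|_{L^2\to L^2}\geq 1$. Your exponent bookkeeping is correct: $p=\tfrac{2(\gamma+1)}{\gamma+2}$, $q=\tfrac{2(\gamma+1)}{\gamma}$ with $\tfrac1p-\tfrac1q=\tfrac1{\gamma+1}\in[\tfrac23,1)$ and $q\in[6,\infty)$ for $\gamma\in(0,\tfrac12]$, and raising $|\lambda|^{\gamma/(\gamma+1)}\le C\|V\|_{L^{\gamma+1}}$ to the power $\gamma+1$ gives \eqref{eq:eigenbound}.

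The genuine gap is in your last paragraph, where you try to establish the Birman--Schwinger correspondence by a bootstrap from $\psi\in L^2$. The first iteration already fails: if $\psi\in L^2$ and $V\in L^{\gamma+1}$, H\"older only gives $V\psi\in L^r$ with $\tfrac1r=\tfrac1{\gamma+1}+\tfrac12$, and for every $\gamma\le\tfrac12$ this yields $\tfrac1r\ge\tfrac76>1$, i.e.\ $r<1$; so $V\psi$ is not even a priori locally integrable, and Theorem~\ref{thm:LA0} (which requires $p\ge1$) cannot be invoked to start the iteration. The issue is not one of effort but of formulation: with the bare information $\psi\in L^2$ the identity $\psi=-(\LL_{\A,0}-\lambda)^{-1}(V\psi)$ does not make sense. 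The paper avoids this by citing the abstract Birman--Schwinger principle of Hansmann and Krej\v{c}i\v{r}\'{i}k (\cite[Theorems~6,7]{HK}), which defines $\mathcal L_{\A,V}$ directly through the factorization $V=W_1W_2$ and the boundedness/compactness of $K_\lambda$, and delivers the correspondence between eigenvalues of $\mathcal L_{\A,V}$ and the spectrum of $K_\lambda$ without any regularity bootstrap; Frank's original argument in \cite{Fr} rests on the same factorization-based definition. To close your proof you should either cite that abstract result, as the paper does, or reformulate the eigenvalue problem in the factorized (form) sense so that $\phi=W_2\psi$ lies in $L^2$ by definition rather than by bootstrap.
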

\begin{remark}\label{rem:frank1}
Inequality \eqref{eq:eigenbound} is related to Lieb-Thirring inequalities, which are usually simpler to prove when $V$ is real, since the self-adjoint setting provides useful variational characterizations for the eigenvalues (see e.g. \cite{BDEL} and the references therein). At our knowledge, the case $V\in\C$ was still not known.
Very recently, B\"ogli and Cuenin showed in \cite{BC} that the bound from above $\gamma\leq\frac12$ is sharp for the magnetic-free case, disproving a conjecture by Laptev and Safronov \cite{LS}. 
\end{remark}
\begin{remark}\label{rem:frank2}
In the magnetic-free case, Frank and Simon obtained in \cite{FS} that the bound can be improved to $0<\gamma\leq 2$, if one involves a stronger norm for $V$ in the radial direction. The argument, which is again perturbative, also works in magnetic operators as above. We omit here further details and we address the reader to \cite{FS}.
\end{remark}
\begin{remark}\label{rem:frank3}
By interpolation between \eqref{est:resa-z} and the trivial estimate
$$
\|(\LL_{{\A},0}-\sigma)^{-1}f\|_{L^2}\leq \text{dist}\,(\sigma,\R^+)\|f\|_{L^2},
$$
one can obtain bounds for the resolvent (and hence for the eigenvalues) for any value of $\gamma$, as showed in \cite{Fr2} in the magnetic-free case. Recently, Kwon and Lee showed in \cite{KL} that the resolvent bounds are sharp, while B\"oglie and Cuenin showed the sharpness of the corresponding eigenvalue bounds in \cite{BC}.
\end{remark}

The proof of Theorem \ref{thm:frank} is identical to the one of Theorem 1 in \cite{Fr}. Indeed, the only ingredients which come into play are the resolvent estimates \eqref{est:resa-z} and the Birman-Schwinger Principle, which in this case holds true (cfr. \cite[Theorems 6,7]{HK}). Therefore, we omit here further details of the proof.

The paper is organized as follows. In Section \ref{sec:pre}, as a preliminaries, we recall the resolvent and spectral measure kernel of the operator $\LL_{{\A},0}$  and some basic lemmas about the oscillatory integrals. In Section \ref{sec:thmmain}, we reduce the proof of Theorem \ref{thm:LA0} to the limitaton absorbing theorem, i.e, Theorem \ref{thm:LA0'} below, which will be shown in  Section \ref{sec:thmla0}. Finally, in appendix section, we verify some conditions used  in Section  \ref{sec:thmla0}. 
\vspace{0.2cm}

{\bf Acknowledgement.} The authors wish to express their gratitude to Rupert Frank for several useful remarks about Theorem \ref{thm:frank}.




\section{Preliminaries}\label{sec:pre}

In this section, we recall
the resolvent and spectral measure kernel of the operator $\LL_{{\A},0}$
constructed in  \cite{GYZZ} and some basic lemmas about the oscillatory integrals.
These preliminaries will be used in the next section.

\subsection{Some notations}
We recall some notations in \cite{FZZ, GYZZ}  which
are associated with the operator $\mathcal{L}_{{\A},0}$.
From \eqref{LAa} with $a=0$, we write
\begin{equation}\label{LAa-r}
\begin{split}
\mathcal{L}_{{\A},0}=-\partial_r^2-\frac{1}r\partial_r+\frac{L_{{\A},0}}{r^2},
\end{split}
\end{equation}
where the operator
\begin{equation}\label{L-angle}
\begin{split}
L_{{\A},0}&=(i\nabla_{\mathbb{S}^{1}}+{\A}(\hat{x}))^2,\qquad \hat{x}\in \mathbb{S}^1
\\&=-\Delta_{\mathbb{S}^{1}}+\big(|{\A}(\hat{x})|^2+i\,\mathrm{div}_{\mathbb{S}^{1}}{\A}(\hat{x})\big)+2i {\A}(\hat{x})\cdot\nabla_{\mathbb{S}^{1}}.
\end{split}
\end{equation}
Let $\hat{x}=(\cos\theta,\sin\theta)$, then
\begin{equation*}
\partial_\theta=-\hat{x}_2\partial_{\hat{x}_1}+\hat{x}_1\partial_{\hat{x}_2},\quad \partial_\theta^2=\Delta_{\mathbb{S}^{1}}.
\end{equation*}
Define $\alpha(\theta):[0,2\pi]\to \R$ such that
\begin{equation}\label{equ:alpha}
\alpha(\theta)={\bf A}(\cos\theta,\sin\theta)\cdot (-\sin\theta,\cos\theta),
\end{equation}
then by \eqref{eq:transversal}, we can write
\begin{equation*}
{\bf A}(\cos\theta,\sin\theta)=\alpha(\theta)(-\sin\theta,\cos\theta),\quad \theta\in[0,2\pi].
\end{equation*}
Thus, we obtain
\begin{equation}\label{LAa-s}
\begin{split}
L_{{\A},0}&=-\Delta_{\mathbb{S}^{1}}+\big(|{\A}(\hat{x})|^2+i\,\mathrm{div}_{\mathbb{S}^{1}}{\A}(\hat{x})\big)+2i {\A}(\hat{x})\cdot\nabla_{\mathbb{S}^{1}}\\
&=-\partial_\theta^2+\big(|{\alpha}(\theta)|^2+i\,{\alpha'}(\theta)\big)+2i {\alpha}(\theta)\partial_\theta\\
&=(i\partial_\theta+\alpha(\theta))^2.
\end{split}
\end{equation}
For simplicity, we define the constant $\alpha$ to be
$$\alpha=\Phi_{\A}=\frac1{2\pi}\int_0^{2\pi} \alpha(\theta) d\theta.$$

\subsection{Resolvent kernel}
To prove Theorem \ref{thm:LA0}, we need the representation of  the resolvent kernel of the operator $\LL_{{\A},0}$.
Let $\lambda>0$, we define the resolvent
\begin{equation}\label{def:res}
\big(\LL_{{\A},0}-(\lambda^2\pm i0)\big)^{-1}
=\lim_{\epsilon\searrow0}\big(\LL_{{\A},0}-(\lambda^2\pm i\epsilon)\big)^{-1}.
\end{equation}

\begin{proposition}[Resolvent kernel, \cite{GYZZ}]\label{prop:res-ker}
Let $x=r_1(\cos\theta_1,\sin\theta_1)$ and $y=r_2(\cos\theta_2,\sin\theta_2)$,
then we have the expression of resolvent kernel
\begin{align}\label{equ:res-ker-out}
     \big(\LL_{{\A},0}-(\lambda^2\pm i0)\big)^{-1}=&\frac1{\pi}\int_{\R^2}\frac{e^{-i(x-y)\cdot\xi}}{|\xi|^2-(\lambda^2\pm i0)}\;d\xi\, A_\alpha(\theta_1,\theta_2)\\\nonumber
  &+\frac1{\pi}\int_0^\infty \int_{\R^2}\frac{e^{-i{\bf n}\cdot\xi}}{|\xi|^2-(\lambda^2\pm i0)}\;d\xi \, B_\alpha(s,\theta_1,\theta_2)\;ds\\
  :=&G(r_1,\theta_1;r_2,\theta_2)+D(r_1,\theta_1;r_2,\theta_2),
\end{align}
where  ${\bf n}=(r_1+r_2, \sqrt{2r_1r_2(\cosh s-1)})$
and where
\begin{equation}\label{A-al}
A_{\alpha}(\theta_1,\theta_2)= \frac{e^{i\int_{\theta_1}^{\theta_2}\alpha(\theta')d\theta'}}{4\pi^2}\big(\mathbbm{1}_{[0,\pi]}(|\theta_1-\theta_2|)
  +e^{-i2\pi\alpha}\mathbbm{1}_{[\pi,2\pi]}(|\theta_1-\theta_2|)\big)
  \end{equation}
 and
\begin{equation}\label{B-al}
\begin{split}
&B_{\alpha}(s,\theta_1,\theta_2)= -\frac{1}{4\pi^2}e^{-i\alpha(\theta_1-\theta_2)+i\int_{\theta_2}^{\theta_{1}} \alpha(\theta') d\theta'}  \Big(\sin(|\alpha|\pi)e^{-|\alpha|s}\\
&\qquad +\sin(\alpha\pi)\frac{(e^{-s}-\cos(\theta_1-\theta_2+\pi))\sinh(\alpha s)-i\sin(\theta_1-\theta_2+\pi)\cosh(\alpha s)}{\cosh(s)-\cos(\theta_1-\theta_2+\pi)}\Big).
 \end{split}
\end{equation}

\end{proposition}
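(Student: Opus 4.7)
The plan is to construct the resolvent kernel by separation of variables in polar coordinates and then to resum the resulting Bessel series through a Sommerfeld--Carslaw contour representation. First, I would diagonalize the angular operator $L_{\A,0}=(i\partial_\theta+\alpha(\theta))^2$ on $L^2(\mathbb{S}^1)$ via the gauge transformation $u\mapsto e^{i\phi(\theta)}u$ with $\phi(\theta):=\int_0^\theta\alpha(\theta')\,d\theta'$. A direct computation gives $(i\partial_\theta+\alpha(\theta))\,e^{i\phi(\theta)}=e^{i\phi(\theta)}(i\partial_\theta)$, so $L_{\A,0}$ is intertwined with $-\partial_\theta^2$ acting on the twisted space of functions satisfying $u(2\pi)=e^{-2\pi i\alpha}u(0)$, where $\alpha=\Phi_{\A}$. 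The resulting eigenbasis is
\[
\psi_k(\theta)=\tfrac{1}{\sqrt{2\pi}}\,e^{i\phi(\theta)}e^{i(k-\alpha)\theta},\qquad k\in\Z,
\]
with eigenvalues $(k-\alpha)^2$.

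On the $k$-th angular mode, the radial part of $\LL_{\A,0}$ in \eqref{LAa-r} reduces to the Bessel operator $-\partial_r^2-r^{-1}\partial_r+r^{-2}(k-\alpha)^2$, whose Green function at energy $\lambda^2+i0$ is the classical expression $\tfrac{i}{2}J_{|k-\alpha|}(\lambda r_<)\,H^{(1)}_{|k-\alpha|}(\lambda r_>)$. Assembling these through the spectral decomposition yields the Bessel-series representation
\[
(\LL_{\A,0}-(\lambda^2+i0))^{-1}(x,y)=\frac{i}{2}\sum_{k\in\Z}J_{|k-\alpha|}(\lambda r_<)H^{(1)}_{|k-\alpha|}(\lambda r_>)\,\psi_k(\theta_1)\overline{\psi_k(\theta_2)}.
\]
This already encodes the stated kernel, but in a form that is too implicit for the oscillatory-integral analysis later in the paper.

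To reach the closed integral representation of the proposition, I would then apply the Schl\"afli--Sommerfeld contour formula for the product $J_\nu H^{(1)}_\nu$ together with a Poisson summation in $k$. The resulting double integral naturally decomposes into two pieces. The geometric part collects the residues from the straight geodesic between $x$ and $y$ (plus one wrap-around copy weighted by the monodromy phase $e^{-2\pi i\alpha}$); after undoing the Hankel transform it reproduces the free 2D Euclidean resolvent kernel $\pi^{-1}\int_{\R^2}\frac{e^{-i(x-y)\cdot\xi}}{|\xi|^2-(\lambda^2+i0)}\,d\xi$ multiplied by the gauge phase $e^{i\int_{\theta_1}^{\theta_2}\alpha(\theta')\,d\theta'}$ and the angular indicator combination that makes up $A_\alpha$ in \eqref{A-al}. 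The diffractive part, evaluated along the Sommerfeld steepest-descent contour parametrised by $s\in[0,\infty)$, produces the free resolvent kernel evaluated at the ``imaginary distance'' vector $\mathbf{n}(s)=(r_1+r_2,\sqrt{2r_1r_2(\cosh s-1)})$ (note $|\mathbf{n}(s)|^2=r_1^2+r_2^2+2r_1r_2\cosh s$) weighted by $B_\alpha(s,\theta_1,\theta_2)$. The latter arises from summing two geometric series in $e^{-|\alpha|s}$ and $e^{-s}$ with weights $\sin(|\alpha|\pi)$ and $\sin(\alpha\pi)$, which recombine into the single fraction with denominator $\cosh(s)-\cos(\theta_1-\theta_2+\pi)$.

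The hard step is clearly this last one: tracking the contour deformation and identifying the diffractive piece in closed form. In particular one has to handle carefully (i) the branch points induced by the non-integer indices $|k-\alpha|$, (ii) the monodromy indicator $\mathbbm{1}_{[0,\pi]}+e^{-2\pi i\alpha}\mathbbm{1}_{[\pi,2\pi]}$ that encodes whether the short geodesic wraps around the flux line at the origin, and (iii) the recombination of the two geometric sums into the compact hyperbolic--trigonometric form of $B_\alpha$ in \eqref{B-al}. An alternative that shortens the combinatorial bookkeeping is to work on the universal cover of $\R^2\setminus\{0\}$ and derive the kernel by the Cheeger--Taylor flat-cone methodology (which is essentially the Carslaw formula in presence of a magnetic flux); either route leads to the decomposition $G+D$ stated in \eqref{equ:res-ker-out}.
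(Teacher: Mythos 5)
The paper does not prove this proposition; it is imported verbatim from \cite{GYZZ}, and your sketch reproduces exactly the construction used there (and in \cite{FZZ}), namely the Cheeger--Taylor/Carslaw program: gauge away $\alpha(\theta)-\alpha$ to reduce $L_{\A,0}$ to $-\partial_\theta^2$ on functions with twisted boundary condition $u(2\pi)=e^{-2\pi i\alpha}u(0)$, expand in the eigenbasis $\psi_k$ with angular eigenvalues $(k-\alpha)^2$, write the mode-by-mode radial Green's function with Bessel/Hankel functions, and then resum the series via a Sommerfeld/Schl\"afli contour representation into a geometric piece (free resolvent along geodesics, with the monodromy phase $e^{-2\pi i\alpha}$ for the wrap-around ray, giving $A_\alpha$) plus a diffractive piece parametrised by $s\in[0,\infty)$ with $|{\bf n}(s)|^2=r_1^2+r_2^2+2r_1r_2\cosh s$, giving $B_\alpha$. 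Two small remarks: the radial Green's function should carry the constant $\tfrac{i\pi}{2}J_{\nu}(\lambda r_<)H^{(1)}_\nu(\lambda r_>)$ rather than $\tfrac{i}{2}$, consistent with the overall $\tfrac1\pi$ prefactor in \eqref{equ:res-ker-out}; and your account of the resummation (Poisson summation, contour deformation, and the recombination of the two geometric sums in $e^{\pm\alpha s}$ and $e^{-s\pm i(\theta_1-\theta_2+\pi)}$ into the single Carslaw fraction with denominator $\cosh s-\cos(\theta_1-\theta_2+\pi)$) is only an outline of what is genuinely the hard computation in \cite{GYZZ}, though you correctly identify where the difficulty lies and the indicator structure $\mathbbm{1}_{[0,\pi]}+e^{-2\pi i\alpha}\mathbbm{1}_{[\pi,2\pi]}$ that records whether the geometric ray crosses the cut.
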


The above integral in $\xi$-variable in \eqref{equ:res-ker-out} is connected with the Euclidean resolvent  kernel which is known
as Hankel function, for example, see \cite[Chapter 3.4]{CK1} or \cite[Eq. (5.16.3)]{Le}.
\begin{lemma}\label{lem:res-k} The integral
\begin{equation}
\int_{\R^2} \frac{e^{ix\cdot\xi}}{|\xi|^2-(1\pm i0)} d\xi=\pm\frac{i}4H_0^{\pm}(|x|)
\end{equation}
where $H_0^\pm$ are the Hankel functions of order zero with $H_0^-=\overline{H_0^+}$ and
\begin{equation}
H_0^+(r)= e^{ir} r^{-1/2} a(r) +b(r).
\end{equation}
Here
\begin{equation}\label{est:a}
|a^{(k)}(r)|\lesssim r^{-k},\quad \forall k\geq 0, \quad a(r)=0, \quad \forall 0<r<\frac12,
\end{equation}
and $b(r)=0$ for all $r>\frac34$, with
\begin{equation}\label{est:b}
\begin{split}
|b(r)|\lesssim |\log r|,\quad |b^{(k)}(r)|&\lesssim r^{-k}\qquad \forall k\geq 1.
\end{split}
\end{equation}
\end{lemma}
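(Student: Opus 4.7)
The plan is to identify the given integral with the 2D outgoing/incoming Helmholtz Green's function and then analyse $H_0^+$ in its two asymptotic regimes. For the identity, observe that, as a tempered distribution in $x$, the function
\[
u_\pm(x) := \int_{\R^2} \frac{e^{ix\cdot\xi}}{|\xi|^2 - (1\pm i0)}\,d\xi
\]
is a radial distributional solution of the Helmholtz equation $(-\Delta - 1)u_\pm = 4\pi^2\,\delta_0$ satisfying Sommerfeld's outgoing (respectively incoming) radiation condition, as selected by the sign of $\pm i0$. The unique such solution is a multiple of $H_0^+(|x|)$ (respectively $H_0^-(|x|)$); the precise proportionality constant $\pm i/4$ is then fixed by passing to polar coordinates in $\xi$, using the Bessel integral $\int_0^{2\pi} e^{i|x||\xi|\cos\varphi}\,d\varphi = 2\pi J_0(|x||\xi|)$, and closing the $|\xi|$-contour in the appropriate complex half-plane around the pole at $|\xi|=1$.

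For the decomposition of $H_0^+$, I would choose a smooth cutoff $\chi \in C_c^\infty([0,\infty))$ with $\chi\equiv 1$ on $[0,1/2]$ and $\mathrm{supp}\,\chi\subset[0,3/4]$, and define
\[
b(r) := \chi(r)\,H_0^+(r),\qquad a(r) := (1-\chi(r))\,r^{1/2}\,e^{-ir}\,H_0^+(r).
\]
By construction $b(r) = 0$ for $r>3/4$, $a(r)=0$ for $r<1/2$, and the identity $a(r)\,e^{ir}r^{-1/2} + b(r) = H_0^+(r)$ holds as required.

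The bounds on $a$ follow from the classical large-argument asymptotic expansion
\[
H_0^+(r) = \sqrt{\tfrac{2}{\pi r}}\,e^{i(r-\pi/4)}\sum_{j=0}^{N} c_j\,r^{-j} + O(r^{-N-3/2}),\qquad r\to\infty,
\]
together with its term-by-term differentiation: the function $r^{1/2}e^{-ir}H_0^+(r)$ is smooth and bounded on $[1/2,\infty)$, with $k$-th derivative $O(r^{-k})$, and combined with the smoothness and compactness of the transition region $[1/2,3/4]$ this yields $|a^{(k)}(r)|\lesssim r^{-k}$ for all $k\geq 0$. For $b$, the small-argument expansion $H_0^+(r) = \frac{2i}{\pi}J_0(r)\log(r/2) + h(r)$, with $J_0$ and $h$ analytic functions of $r^2$, immediately gives $|b(r)|\lesssim|\log r|$ on $\mathrm{supp}\,\chi$; for $k\geq 1$ every differentiation of $\log(r/2)$ produces a factor $r^{-1}$, so $|b^{(k)}(r)|\lesssim r^{-k}$. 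The only delicate point is the bookkeeping for the sign of the $\pm i0$ prescription and the precise constant $\pm i/4$ in the first step; once those are in place, the remaining estimates reduce to the two standard asymptotic regimes of $H_0^+$.
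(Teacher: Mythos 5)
Your approach matches what the paper implicitly relies on: the paper gives no proof of this lemma, only citing Colton--Kress (Chapter~3.4) for the Green's function identity and Lebedev for the Hankel asymptotics, and your reconstruction --- distributional identification via the Sommerfeld radiation condition, a smooth cutoff decomposition, and term-by-term differentiation of the large- and small-argument expansions of $H_0^{(1)}$ --- is precisely the route those references take. The bounds on $a$ and $b$ come out as you say; in particular the Leibniz differentiation of $J_0(r)\log(r/2)$ gives the $r^{-k}$ derivative bounds for $b$, and the bounded-with-decaying-derivatives behaviour of $r^{1/2}e^{-ir}H_0^{(1)}(r)$ gives those for $a$.

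One caveat before you state that ``the precise proportionality constant $\pm\frac{i}{4}$ is then fixed'' by your contour/Bessel calculation: with the distributional identity $\int_{\R^2}e^{ix\cdot\xi}\,d\xi=4\pi^2\delta_0$ you invoke, together with the fact that $\frac{i}{4}H_0^{(1)}(|x|)$ is the outgoing fundamental solution of $(-\Delta-1)G=\delta_0$ in $\R^2$, the integral evaluates to $\pm i\pi^2 H_0^{\pm}(|x|)$, not $\pm\frac{i}{4}H_0^{\pm}(|x|)$. If you actually carry out the polar computation (e.g.\ via $\int_0^\infty\frac{\rho J_0(a\rho)}{\rho^2-1-i0}\,d\rho=\frac{\pi i}{2}H_0^{(1)}(a)$) you will find the stated constant is off by a factor of $4\pi^2$. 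This is almost certainly a harmless normalization slip in the lemma --- every subsequent estimate in the paper is invariant under absolute constants --- but your argument as written cannot ``fix'' the constant at $\pm\frac{i}{4}$; you should either reconcile the Fourier conventions explicitly or state the identity only up to a nonzero absolute constant, which is all the rest of the paper uses.
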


\subsection{Spectral measure kernel}
According to Stone's formula, the spectral measure is related to the resolvent
\begin{equation}\label{equ:spemes}
   dE_{\sqrt{\LL_{\A},0}}(\lambda)=\frac{d}{d\lambda}E_{\sqrt{\LL_{\A},0}}(\lambda)\;d\lambda
   =\frac{\lambda}{i\pi}\big(R(\lambda+i0)-R(\lambda-i0)\big)\;d\lambda
\end{equation}
where the resolvent
$$R(\lambda\pm i0)=\big(\LL_{{\A},0}-(\lambda^2\pm i0)\big)^{-1}=\lim_{\epsilon\searrow0}\big(\LL_{{\A},0}-(\lambda^2\pm i\epsilon)\big)^{-1}.$$
\begin{proposition}[Spectral measure kernel \cite{GYZZ}]\label{prop:spect}
Let $x=r_1(\cos\theta_1, \sin\theta_1)$ and $y=r_2(\cos\theta_2, \sin\theta_2)$ in $\R^2\setminus\{0\}$. Define
\begin{equation}\label{d-j}
d(r_1,r_2,\theta_1,\theta_2)=\sqrt{r_1^2+r_2^2-2r_1r_2\cos(\theta_1-\theta_2)}=|x-y|,
\end{equation}
and
\begin{equation} \label{d-s}
d_s(r_1,r_2,\theta_1,\theta_2)=|{\bf n}|=\sqrt{r_1^2+r_2^2+2  r_1r_2\, \cosh s},\quad s\in [0,+\infty).
\end{equation}
Then the Schwartz kernel of the spectral measure
 \begin{equation}\label{ker:spect}
 \begin{split}
 dE_{\sqrt{\LL_{{\A},0}}}(\lambda;x,y) =&
\frac{\lambda}{\pi} \sum_{\pm}\Big(
 a_\pm(\lambda |x-y|)e^{\pm i\lambda |x-y|} A_{\alpha}(\theta_1,\theta_2)
 \\&\quad
+\int_0^\infty a_\pm(\lambda |{\bf{n}}|)e^{\pm i\lambda |{\bf n}|}
B_{\alpha}(s,\theta_1,\theta_2) ds\Big),
\end{split}
\end{equation}
where $A_{\alpha}(\theta_1,\theta_2)$ and $B_{\alpha}(s,\theta_1,\theta_2)$ are given in \eqref{A-al} and \eqref{B-al}
and $a_\pm\in C^\infty([0,+\infty))$ satisfies
\begin{equation}\label{bean}
\begin{split}
| \partial_r^k a_\pm(r)|\leq C_k(1+r)^{-\frac{1}2-k},\quad k\geq 0.
\end{split}
\end{equation}
\end{proposition}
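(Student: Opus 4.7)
The plan is to obtain \eqref{ker:spect} as a direct consequence of Stone's formula \eqref{equ:spemes} combined with the explicit resolvent representation of Proposition \ref{prop:res-ker}. The starting point is to insert both boundary values $R(\lambda^2\pm i0)$ from Proposition \ref{prop:res-ker} into \eqref{equ:spemes}. In each of the $\xi$-integrals appearing there, the rescaling $\xi=\lambda\eta$ reduces the integrand to the unit-energy form treated in Lemma \ref{lem:res-k}, giving
\[
\int_{\R^2}\frac{e^{-i(x-y)\cdot\xi}}{|\xi|^2-(\lambda^2\pm i0)}\,d\xi \;=\; \pm\frac{i}{4}\,H_0^{\pm}\!\bigl(\lambda|x-y|\bigr),
\]
and analogously with $|x-y|$ replaced by $|\mathbf{n}|$. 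Inserting these into $R(\lambda^2\pm i0)$ and forming $R(\lambda^2+i0)-R(\lambda^2-i0)$, the prefactors $\pm i/4$ conspire so that the two Hankel functions add rather than cancel, and using $H_0^-=\overline{H_0^+}$ one obtains $H_0^+(\lambda r)+H_0^-(\lambda r)=2J_0(\lambda r)$. Multiplying by $\lambda/(i\pi)$ as dictated by \eqref{equ:spemes} yields
\[
dE_{\sqrt{\LL_{\A,0}}}(\lambda;x,y)\;=\;\frac{\lambda}{2\pi^{2}}\Bigl[J_0\!\bigl(\lambda|x-y|\bigr)A_\alpha(\theta_1,\theta_2)+\int_0^\infty J_0\!\bigl(\lambda|\mathbf{n}|\bigr)B_\alpha(s,\theta_1,\theta_2)\,ds\Bigr].
\]

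What remains is to split $\tfrac{1}{2\pi}J_0(r)$ as $a_+(r)e^{ir}+a_-(r)e^{-ir}$ with $a_\pm\in C^\infty([0,\infty))$ satisfying the symbol bound \eqref{bean}. I would do this regionally. For large argument, the classical asymptotic expansion $J_0(r)=r^{-1/2}\bigl(e^{ir}f_+(r)+e^{-ir}f_-(r)\bigr)$ with $f_-=\overline{f_+}$ and $|f_\pm^{(k)}(r)|\lesssim r^{-k}$ allows one to set $a_\pm(r)=\tfrac{1}{2\pi}r^{-1/2}f_\pm(r)$, which satisfies \eqref{bean} directly. For bounded $r$, $J_0$ is entire and all its derivatives are bounded, so the naive choice $a_\pm(r)=\tfrac{1}{4\pi}J_0(r)e^{\mp ir}$ also trivially obeys \eqref{bean}. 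A smooth cutoff then patches the two regimes; the conjugation symmetry $a_-=\overline{a_+}$, which guarantees reality of the decomposition, is preserved by construction.

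The main obstacle is the logarithmic singularity of $H_0^+$ at the origin, encoded in the $b(r)$ piece of Lemma \ref{lem:res-k} via the bound $|b(r)|\lesssim|\log r|$ from \eqref{est:b}. The crucial cancellation is that although $H_0^+-H_0^-=2iY_0$ carries this logarithm, Stone's formula produces the complementary combination $H_0^++H_0^-=2J_0$, which is entire; this is precisely what allows the symbol estimate \eqref{bean} to extend down to $r=0$ with no logarithmic loss. A secondary technical point is the justification of interchanging the limit $\varepsilon\searrow 0$ of Stone's formula with the $s$-integral appearing in the $D$-term; this is ensured by the uniform exponential decay of $B_\alpha(s,\theta_1,\theta_2)$ in $s$ visible from \eqref{B-al}, which renders the $s$-integral absolutely convergent uniformly in $\varepsilon$.
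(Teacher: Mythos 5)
Your derivation is correct. The paper itself does not prove this proposition — it imports it from \cite{GYZZ}, which is also the source of the resolvent kernel (Proposition \ref{prop:res-ker}) and of Stone's formula \eqref{equ:spemes} as recorded here — so there is no internal proof to compare against, but your route (rescale the $\xi$-integral to unit energy, invoke Lemma \ref{lem:res-k}, note that the $\pm i/4$ prefactors make Stone's formula produce $H_0^+ + H_0^- = 2J_0$ rather than the log-singular $H_0^+ - H_0^- = 2iY_0$, then split $J_0(r)$ into $a_+(r)e^{ir}+a_-(r)e^{-ir}$ using the oscillatory asymptotics for $r\gtrsim 1$ and a trivial cutoff near $r=0$) is exactly the natural derivation from those ingredients and is arithmetically consistent: the resulting constant $\lambda/(2\pi^2)$ in front of $J_0$ matches $\lambda/\pi$ in front of $\sum_\pm a_\pm e^{\pm i\lambda\cdot}$ precisely when $\sum_\pm a_\pm(r)e^{\pm ir} = \tfrac{1}{2\pi}J_0(r)$. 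One small sharpening worth recording: the two regional definitions of $a_\pm$ automatically agree on the overlap $3/4 < r < 1$, since there $b\equiv 0$ in Lemma \ref{lem:res-k} and hence $J_0(r)=\tfrac12\big(e^{ir}r^{-1/2}a(r)+e^{-ir}r^{-1/2}\overline{a(r)}\big)$ exactly, so the smooth cutoff does not need to interpolate between genuinely different functions; this makes the verification of the symbol bound \eqref{bean} entirely elementary. The observation about dominated convergence in the $s$-integral via the exponential decay of $B_\alpha$ (which holds since $|\alpha|<1$, noting $\cosh(\alpha s)/\cosh s$ decays like $e^{-(1-|\alpha|)s}$, and the near-$s=0$ singularity is integrable by \eqref{equ:ream1}--\eqref{equ:ream3}) is also the right justification.
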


\subsection{Some basic lemmas}
In this subsection, we recall two basic lemmas about the oscillatory integrals, see Stein \cite{Stein}.

\begin{lemma}\label{lem:absdec}
Let $\phi$ and $\psi$ be smooth functions so that $\psi$ has compact support in $(a,b)$, and $\phi'(x)\neq 0$ for all $x\in[a,b]$. Then,
\begin{equation}\label{equ:lamdec}
  \big|\int_a^b e^{i\lambda \phi(x)}\psi(x)\;dx\big|\lesssim (1+\lambda)^{-K},
\end{equation}
for all $K\geq0$.
\end{lemma}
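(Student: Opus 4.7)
The plan is the classical non-stationary phase argument: integrate by parts repeatedly using the operator that inverts $e^{i\lambda\phi}$ under differentiation, and combine the resulting bound with the trivial estimate to get the stated $(1+\lambda)^{-K}$ decay.

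First I would dispose of the small-$\lambda$ regime. For $0\le\lambda\le 1$ one has the trivial bound
\begin{equation*}
\Bigl|\int_a^b e^{i\lambda\phi(x)}\psi(x)\,dx\Bigr|\le \|\psi\|_{L^1(a,b)},
\end{equation*}
which is $O(1)=O((1+\lambda)^{-K})$ uniformly in $\lambda\in[0,1]$ for any $K\ge 0$. So the real content is the case $\lambda\ge 1$, where I must gain arbitrarily many powers of $\lambda^{-1}$.

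For $\lambda\ge 1$, I introduce the first-order differential operator
\begin{equation*}
L f := \frac{1}{i\lambda\,\phi'(x)}\,\frac{d f}{dx},
\end{equation*}
which is well-defined on $[a,b]$ because $\phi'\neq 0$ there, and its key property is $L\bigl(e^{i\lambda\phi}\bigr)=e^{i\lambda\phi}$. Its formal transpose is
\begin{equation*}
L^{t}\psi = -\frac{1}{i\lambda}\,\frac{d}{dx}\Bigl(\frac{\psi}{\phi'}\Bigr).
\end{equation*}
Since $\psi$ has compact support inside the open interval $(a,b)$, integration by parts produces no boundary contributions, and iterating $K$ times gives
\begin{equation*}
\int_a^b e^{i\lambda\phi(x)}\psi(x)\,dx=\int_a^b L^{K}(e^{i\lambda\phi(x)})\,\psi(x)\,dx
=\int_a^b e^{i\lambda\phi(x)}\,(L^{t})^{K}\psi(x)\,dx.
\end{equation*}
Each application of $L^{t}$ pulls out a factor of $\lambda^{-1}$ and replaces $\psi$ by a finite linear combination of derivatives of $\psi$ divided by powers of $\phi'$; all such expressions are smooth on $[a,b]$ (using $\phi'\neq 0$) and compactly supported in $(a,b)$. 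Hence
\begin{equation*}
\Bigl|\int_a^b e^{i\lambda\phi(x)}\psi(x)\,dx\Bigr|\le \lambda^{-K}\,C_{K}(\phi,\psi),
\end{equation*}
where $C_{K}(\phi,\psi)$ depends only on $\|\psi\|_{C^{K}}$ and on $\min_{[a,b]}|\phi'|$ and $\|\phi\|_{C^{K+1}([a,b])}$.

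Combining the two regimes: for $0\le\lambda\le 1$ the trivial bound gives the estimate with constant $\|\psi\|_{L^1}\cdot 2^{K}$, and for $\lambda\ge 1$ the integration-by-parts bound gives $\lambda^{-K}\le 2^{K}(1+\lambda)^{-K}$. Taking the maximum of the two constants yields the desired inequality \eqref{equ:lamdec} for all $\lambda\ge 0$ and all $K\ge 0$. There is no real obstacle here — the only thing to be careful about is that compact support of $\psi$ inside the \emph{open} interval $(a,b)$ is what kills the boundary terms at each integration by parts, and that $\phi'\neq 0$ on the whole closed interval guarantees $1/\phi'$ and all its derivatives are bounded on $\mathrm{supp}\,\psi$.
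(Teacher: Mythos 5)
Your proof is correct and is precisely the classical non-stationary phase argument (repeated integration by parts with the operator $L=\frac{1}{i\lambda\phi'}\partial_x$, no boundary terms thanks to compact support in the open interval, then combine with the trivial bound for small $\lambda$). The paper does not give its own proof of this lemma but simply cites Stein, and Stein's proof is the same argument you have written out.
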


We also need the following Van der Corput lemma.
\begin{lemma}[Van der Corput,\cite{Stein} ] \label{lem:VCL} Let $\phi$ be real-valued and smooth in $(a,b)$, and that $|\phi^{(k)}(x)|\geq1$ for all $x\in (a,b)$. Then
\begin{equation}
\left|\int_a^b e^{i\lambda\phi(x)}\psi(x)dx\right|\leq c_k\lambda^{-1/k}\left(|\psi(b)|+\int_a^b|\psi'(x)|dx\right)
\end{equation}
holds when (i) $k\geq2$ or (ii) $k=1$ and $\phi'(x)$ is monotonic. Here $c_k$ is a constant depending only on $k$.
\end{lemma}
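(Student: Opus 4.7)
The plan is to proceed by induction on $k$, following the classical scheme in Stein's monograph.

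For the base case $k=1$, I would integrate by parts once, writing $e^{i\lambda\phi}=(i\lambda\phi')^{-1}\partial_x e^{i\lambda\phi}$. This produces a boundary contribution plus an interior piece,
\[
\left.\frac{\psi\, e^{i\lambda\phi}}{i\lambda\phi'}\right|_a^b-\frac{1}{i\lambda}\int_a^b e^{i\lambda\phi}\left(\frac{\psi'}{\phi'}-\frac{\psi\phi''}{(\phi')^2}\right)dx.
\]
Using $|\phi'|\geq 1$ together with $|\psi(a)|\leq |\psi(b)|+\int_a^b|\psi'|\,dx$ (the fundamental theorem of calculus), the boundary terms and the $\psi'/\phi'$ term are immediately controlled by $\lambda^{-1}$ times the stated amplitude factor. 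For the remaining $\psi\phi''/(\phi')^2$ term, the monotonicity of $\phi'$ forces $\phi''$ to have constant sign, so
\[
\int_a^b\frac{|\phi''|}{(\phi')^2}\,dx=\Bigl|\bigl[-1/\phi'\bigr]_a^b\Bigr|\leq 2,
\]
and combining this with the pointwise bound $|\psi(x)|\leq |\psi(b)|+\int_a^b|\psi'|\,dx$ closes the base case.

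For the inductive step with $k\geq 2$, I would assume $|\phi^{(k)}|\geq 1$, WLOG $\phi^{(k)}\geq 1$. Then $\phi^{(k-1)}$ is strictly increasing and vanishes at most at one point $c\in[a,b]$ (choose $c$ to be an endpoint if $\phi^{(k-1)}$ has no zero). For a parameter $\delta>0$ to be optimized, split the integral into the piece over $I_\delta:=(a,b)\cap\{|x-c|<\delta\}$ and its complement. The integral over $I_\delta$ is bounded trivially by $2\delta\,\sup_{[a,b]}|\psi|\leq 2\delta(|\psi(b)|+\int_a^b|\psi'|)$. On each of the at most two subintervals making up the complement, the mean value theorem gives $|\phi^{(k-1)}|\geq \delta$, so applying the inductive hypothesis to the rescaled phase $\phi/\delta$ (which satisfies $|(\phi/\delta)^{(k-1)}|\geq 1$) yields a bound of order $(\lambda\delta)^{-1/(k-1)}(|\psi(b)|+\int_a^b|\psi'|)$. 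Balancing the two contributions with $\delta\sim \lambda^{-1/k}$ delivers the sharp $\lambda^{-1/k}$ decay.

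The main technical obstacle is the bookkeeping of the amplitude factor $|\psi(b)|+\int_a^b|\psi'|$ across the inductive splitting. When the inductive hypothesis is invoked on a subinterval of $(a,b)$, its "right endpoint value" of $\psi$ and its $L^1$ derivative norm must be controlled by the original quantities on $(a,b)$; this is routine via the fundamental theorem of calculus, but it must be done carefully so that the constants depend only on $k$ and not on $\phi$, $\psi$, or the splitting point $c$.
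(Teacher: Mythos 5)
The paper does not prove this lemma; it is quoted directly from Stein's lecture notes \cite{Stein}, so there is no in-paper argument to compare against. Your sketch is a correct reconstruction of the classical argument from that reference: integration by parts plus the single-sign observation $\int_a^b |\phi''|/(\phi')^2\,dx = \bigl|\,[-1/\phi']_a^b\,\bigr|\leq 2$ for the base case, and the split-at-the-zero-of-$\phi^{(k-1)}$, rescale, and optimize-in-$\delta$ scheme for the induction. The one organizational difference from Stein is that he first proves the amplitude-free case $\psi\equiv 1$ by induction and then inserts a general $\psi$ by a single further integration by parts, writing $\int_a^b e^{i\lambda\phi}\psi\,dx = F(b)\psi(b)-\int_a^b F(x)\psi'(x)\,dx$ with $F(x)=\int_a^x e^{i\lambda\phi}\,dt$; you instead thread the amplitude factor $|\psi(b)|+\int_a^b|\psi'|$ through the induction, which is equally valid since, as you note, this factor on any subinterval is dominated by the one on $(a,b)$ via the fundamental theorem of calculus. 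Two small points to tidy in a full write-up: the reduction to $\phi^{(k)}\geq 1$ should be justified by remarking that $|\phi^{(k)}|\geq 1$ and continuity force a single sign, so one may replace $\phi$ by $-\phi$ and conjugate; and the choice $\delta\sim\lambda^{-1/k}$ should be checked against the degenerate case $\delta\geq b-a$, where the complement is empty and the trivial bound alone already gives $\lesssim\lambda^{-1/k}\sup|\psi|$.
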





\section{The proof of Theorem \ref{thm:LA0}}\label{sec:thmmain}

In this section, we prove Theorem \ref{thm:LA0} based on Proposition \ref{prop:res-ker}, Proposition \ref{prop:spect}  and Lemma \ref{lem:res-k}.
By the scaling invariant of the operator $\LL_{{\A},0}$, it suffices to prove \eqref{est:resa-z} when $|\sigma|=1$ with $\sigma\neq 1$, that is,
\begin{equation}\label{est:res-s}
\|(\LL_{{\A},0}-\sigma)^{-1}f\|_{L^q(\R^2)}\leq C\|f\|_{L^p(\R^2)}, \quad |\sigma|=1,\,\sigma\neq 1,
\end{equation}
where $p,q$ are in Theorem \ref{thm:LA0}.
More precisely, for fixed $\sigma\neq0$, from Proposition \ref{prop:spect}  with $\lambda\to |\sigma|^{1/2}\lambda$,
\begin{equation}
\begin{split}
(\LL_{{\A},0}-\sigma)^{-1}&=\int_0^\infty \frac{1}{\lambda^2-\sigma} dE_{\sqrt{\LL_{{\A},0}}}(\lambda;x,y)
\\&=
|\sigma|^{-1}\int_0^\infty \frac{1}{\lambda^2-\sigma |\sigma|^{-1}} dE_{\sqrt{\LL_{{\A},0}}}(\lambda;\sqrt{|\sigma|}x,\sqrt{|\sigma|} y),
\end{split}
\end{equation}
thus we see that \eqref{est:resa-z}
is reduce to \eqref{est:res-s}.

To prove \eqref{est:res-s}, let $\sigma=\pm \sqrt{1-\delta^2}+i\delta$ with $\delta\in (-1,0)\cup (0,1)$,
we consider three cases: (i) $\sigma=- \sqrt{1-\delta^2}+i\delta$, (ii) $\sigma=\sqrt{1-\delta^2}+i\delta$ and $|\delta|\geq \epsilon$,
and (iii) $\sigma=\sqrt{1-\delta^2}+i\delta$ and $|\delta|\leq \epsilon$, where $0<\epsilon\ll1$.

\subsection{Case (i): $\sigma=- \sqrt{1-\delta^2}+i\delta$.} Recall \eqref{ker:spect}, in this case, we have
\begin{equation}\label{R-case1}
\begin{split}
(\LL_{{\A},0}-\sigma)^{-1}&=\int_0^\infty \frac{\lambda}{\lambda^2+\sqrt{1-\delta^2}-i\delta} \sum_{\pm}\Big(
 a_\pm(\lambda |x-y|)e^{\pm i\lambda |x-y|} A_{\alpha}(\theta_1,\theta_2)
 \\&\quad
+\int_0^\infty a_\pm(\lambda |{\bf{n}}|)e^{\pm i\lambda |{\bf n}|}
B_{\alpha}(s,\theta_1,\theta_2) ds\Big) \, d\lambda.
\end{split}
\end{equation}

\begin{lemma}\label{lem:apmest} Let $a_\pm$ satisfy \eqref{bean}, then there exists a constant C such that
 \begin{equation}
 \begin{split}
\Big|\int_0^\infty \frac{\lambda}{\lambda^2+\sqrt{1-\delta^2}-i\delta}   a_\pm(\lambda r)e^{\pm i\lambda r}\, d\lambda\Big|\leq C\times
\begin{cases} -\log r, \quad r<3/4;\\
r^{-1},\quad r\geq 3/4.
\end{cases}
\end{split}
\end{equation}

\end{lemma}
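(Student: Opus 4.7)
The plan is to exploit two facts: a uniform (in $\delta$) lower bound on $|\lambda^2-\sigma|$ that follows from $|\sigma|=1$, together with the decay \eqref{bean} of $a_\pm$. The small-$r$ regime will be handled by direct estimation, while the large-$r$ regime requires one integration by parts to extract the gain $r^{-1}$ from the oscillatory factor $e^{\pm i\lambda r}$.

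First I would establish the uniform denominator bound
$$|\lambda^2+\sqrt{1-\delta^2}-i\delta|^2=(\lambda^2+\sqrt{1-\delta^2})^2+\delta^2\geq \lambda^4+(1-\delta^2)+\delta^2=1+\lambda^4,$$
so the modulus of the integrand is dominated by $\lambda(1+\lambda r)^{-1/2}(1+\lambda^4)^{-1/2}$, uniformly in $\delta\in(-1,0)\cup(0,1)$. For $r<3/4$, I would split the $\lambda$-integration at $\lambda=1$ and $\lambda=1/r$. On $(0,1)$ the integrand is $\lesssim\lambda$ and contributes $O(1)$; on $(1,1/r)$ one has $(1+\lambda r)^{-1/2}\leq 1$ together with $\lambda(1+\lambda^4)^{-1/2}\lesssim\lambda^{-1}$, yielding $\int_1^{1/r}d\lambda/\lambda=-\log r$; on $(1/r,\infty)$ the bound $(1+\lambda r)^{-1/2}\leq(\lambda r)^{-1/2}$ gives an integrand $\lesssim r^{-1/2}\lambda^{-3/2}$ with integral $O(1)$. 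Summing these three contributions produces the claimed $-\log r$ bound on $(0,3/4)$.

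For $r\geq 3/4$ the direct estimate returns only $O(1)$, which is insufficient for large $r$, so I would integrate by parts via $e^{\pm i\lambda r}=\pm(ir)^{-1}\partial_\lambda e^{\pm i\lambda r}$. The boundary terms vanish: at $\lambda=0$ because of the explicit factor $\lambda$ in the numerator, and at $\lambda=\infty$ because $\lambda\,a_\pm(\lambda r)/(\lambda^2-\sigma)=O(r^{-1/2}\lambda^{-3/2})$. What remains is an $r^{-1}$ prefactor multiplied by an integral of
$$\partial_\lambda\!\left[\frac{\lambda\,a_\pm(\lambda r)}{\lambda^2-\sigma}\right]=\frac{a_\pm(\lambda r)+\lambda r\,a_\pm'(\lambda r)}{\lambda^2-\sigma}-\frac{2\lambda^2 a_\pm(\lambda r)}{(\lambda^2-\sigma)^2}.$$
Since $|s\,a_\pm'(s)|\lesssim(1+s)^{-1/2}$ by \eqref{bean}, both summands are controlled in modulus by $C(1+\lambda r)^{-1/2}(1+\lambda^4)^{-1/2}$ (the square in the second denominator only improving matters), and the same splitting at $\lambda=1$ and $\lambda=1/r$ now yields a total integral of size $O(r^{-1/2})$. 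Hence the integral in the lemma is bounded by $Cr^{-3/2}\leq C'r^{-1}$ on $r\geq 3/4$, as required.

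The main thing to watch out for—though ultimately routine—is keeping all implicit constants independent of $\delta\in(-1,0)\cup(0,1)$, which is guaranteed by the lower bound $|\lambda^2-\sigma|\geq\sqrt{1+\lambda^4}$ established in the first step. No Van der Corput or stationary-phase argument is needed here, because in case (i) the square root $\sqrt{\sigma}$ stays bounded away from the positive real axis, so the integrand is non-resonant and a single integration by parts suffices.
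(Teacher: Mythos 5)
Your argument follows the same route as the paper's: for $r<3/4$ you split the $\lambda$-integral at $1$ and $1/r$ and estimate directly using \eqref{bean} to obtain $-\log r$, and for $r\geq 3/4$ you integrate by parts once in $\lambda$ (with the explicit denominator bound $|\lambda^2-\sigma|\geq\sqrt{1+\lambda^4}$ replacing the paper's implicit use of it) to extract $r^{-1}$. The extra $r^{-1/2}$ you gain in the large-$r$ regime is correct but unnecessary, and the phrase ``the same splitting at $\lambda=1$ and $\lambda=1/r$'' is slightly off there since $1/r\leq 4/3$ may lie below $1$, but the intended $O(1)$ bound on the post-IBP integral clearly holds.
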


\begin{proof}
We first consider the case that $r<3/4$. From \eqref{bean}, we obtain
 \begin{equation}
 \begin{split}
&\Big|\int_0^\infty \frac{\lambda}{\lambda^2+\sqrt{1-\delta^2}-i\delta}   a_\pm(\lambda r)e^{\pm i\lambda r}\, d\lambda\Big|\\
&\lesssim \int_0^1\lambda \,d\lambda+\int_1^{\frac1r} \lambda^{-1}\, d\lambda+r^{-\frac12}\int_{\frac1r}^\infty \lambda^{-\frac32}\, d\lambda\leq C|\log r|.
\end{split}
\end{equation}
Next we consider the case that $r\geq3/4$.  From \eqref{bean} again, and by integration by parts, we  obtain
 \begin{equation}
 \begin{split}
&\Big|\int_0^\infty \frac{\lambda}{\lambda^2+\sqrt{1-\delta^2}-i\delta}   a_\pm(\lambda r)e^{\pm i\lambda r}\, d\lambda\Big|
\lesssim r^{-1} \int_0^\infty \frac{1}{\lambda^2+1}\, d\lambda\leq Cr^{-1}.
\end{split}
\end{equation}
Thus, we conclude the proof of Lemma \ref{lem:apmest}.

\end{proof}

By direct computation, we have
\begin{equation}\label{est:AB}
|A_{\alpha}(\theta_1,\theta_2)|+\int_0^\infty \big|B_{\alpha}(s,\theta_1,\theta_2)\big| ds\leq C.
\end{equation}
This together with \eqref{R-case1} and the fact $|{\bf{n}}|\geq |x-y|$ gives
\begin{equation}\label{R-case1}
\begin{split}
|(\LL_{{\A},0}-\sigma)^{-1}|\lesssim |\log |x-y||\chi_{|x-y|\leq 3/4}+|x-y|^{-1} \chi_{|x-y|\geq 3/4}.
\end{split}
\end{equation}
On the other hand, it is easy to see that for any $1\leq p<+\infty$
\begin{align}\nonumber
  \big\|\log|x|\big\|_{L^p(|x|\leq \frac34)}^p= &C \int_0^\frac34 \big|\log r\big|^p r\;dr\\\label{equ:logxlp}
  =&C\int_{\frac43}^{+\infty}\big|\log s\big|^p\frac{ds}{s^3}<+\infty
\end{align}
which implies
$$\big|\log |x|\big|\chi_{|x|\leq 3/4}+|x|^{-1} \chi_{|x|\geq 3/4}\in L^{\big(1+\frac1q-\frac1p\big)^{-1}}(\R^2),$$
for $\frac23\leq \frac1p-\frac1q<1$ or $0<1+\frac1q-\frac1p\leq \frac13$. Hence, \eqref{est:res-s} follows by
Young's inequality.

\subsection{Case (ii): $\sigma=\sqrt{1-\delta^2}+i\delta$ and $|\delta|\geq \epsilon$.}
In this case, we use the same argument of Case (i) to prove \eqref{est:res-s} if we could show
\begin{lemma} Let $a_\pm$ satisfy \eqref{bean} and $1\geq |\delta|\geq \epsilon$, then
there exists a constant $C_\epsilon>0$ such that
 \begin{equation}
 \begin{split}
\Big|\int_0^\infty \frac{\lambda}{\lambda^2-\sqrt{1-\delta^2}-i\delta}   a_\pm(\lambda r)e^{\pm i\lambda r}\, d\lambda\Big|\leq C_\epsilon\times
\begin{cases} -\log r, \quad r<3/4;\\
r^{-1},\quad r\geq 3/4.
\end{cases}
\end{split}
\end{equation}

\end{lemma}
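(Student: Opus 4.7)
The plan is to mirror the proof of Lemma 3.1 (Case (i)), with the free bound $|\lambda^2+\sqrt{1-\delta^2}-i\delta|\ge 1$ replaced by a uniform lower bound available only because $|\delta|\ge\epsilon$. Specifically, for $\sigma=\sqrt{1-\delta^2}+i\delta$ we have
$$
|\lambda^2-\sqrt{1-\delta^2}-i\delta|^2=(\lambda^2-\sqrt{1-\delta^2})^2+\delta^2\ge\delta^2\ge\epsilon^2
$$
for every $\lambda\ge 0$, while for $\lambda\ge\sqrt{2}$ the ``natural'' lower bound
$$
|\lambda^2-\sqrt{1-\delta^2}-i\delta|\ge \lambda^2-\sqrt{1-\delta^2}\ge \lambda^2/2
$$
still holds, since $\sqrt{1-\delta^2}\le 1$. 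These two bounds replace the estimate $|\lambda^2+\sqrt{1-\delta^2}-i\delta|\ge\max\{1,\lambda^2\}$ used in Case (i), and the rest of the argument then proceeds in the same fashion.

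For $r<3/4$, I would split the integral at $\lambda=2$. On $[0,2]$ the uniform lower bound gives
$$
\int_0^2\frac{\lambda\,|a_\pm(\lambda r)|}{|\lambda^2-\sqrt{1-\delta^2}-i\delta|}\,d\lambda\lesssim\frac{1}{\epsilon}\int_0^2\lambda\,d\lambda\lesssim \epsilon^{-1},
$$
using that $|a_\pm|$ is bounded by \eqref{bean}. On $[2,\infty)$ the denominator is $\ge\lambda^2/2$, and I repeat the splitting from Lemma 3.1 at $\lambda=1/r$: for $\lambda r\le 1$ use $|a_\pm|\lesssim 1$ and for $\lambda r\ge 1$ use $|a_\pm(\lambda r)|\lesssim(\lambda r)^{-1/2}$, which gives
$$
\int_2^{1/r}\lambda^{-1}\,d\lambda+r^{-1/2}\int_{1/r}^\infty\lambda^{-3/2}\,d\lambda\lesssim |\log r|+1.
$$
Adding the two pieces yields $\lesssim_\epsilon |\log r|$ on $(0,3/4)$ (for $r$ close to $3/4$ the bound is absorbed into the constant).

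For $r\ge 3/4$, I would integrate by parts in $\lambda$, writing $e^{\pm i\lambda r}=\pm(ir)^{-1}\partial_\lambda e^{\pm i\lambda r}$:
$$
I=\pm\frac{1}{ir}\int_0^\infty\partial_\lambda\!\left[\frac{\lambda\,a_\pm(\lambda r)}{\lambda^2-\sqrt{1-\delta^2}-i\delta}\right]e^{\pm i\lambda r}\,d\lambda,
$$
the boundary terms vanishing (the integrand vanishes at $\lambda=0$ and decays at $\infty$). The derivative produces two pieces: one with $(\lambda^2-\sqrt{1-\delta^2}-i\delta)^{-1}$ multiplied by $a_\pm+\lambda r\,a'_\pm$, and one with $2\lambda^2 a_\pm$ over $(\lambda^2-\sqrt{1-\delta^2}-i\delta)^{2}$. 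Each is estimated in $L^1_\lambda$ on $[0,2]$ by the uniform lower bound (contributing factors $\epsilon^{-1}$ or $\epsilon^{-2}$ against an integral over a set of finite measure with bounded $|a_\pm|$) and on $[2,\infty)$ by the $\lambda^{-2}$ decay already used in Case (i). The result is $\int_0^\infty|\partial_\lambda(\cdots)|\,d\lambda\le C_\epsilon$, whence $|I|\le C_\epsilon r^{-1}$.

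The only mildly delicate step is the contribution of the second derivative term, which carries the factor $(\lambda^2-\sqrt{1-\delta^2}-i\delta)^{-2}$; its $L^1$ norm on $[0,2]$ is controlled by $\epsilon^{-2}$ times a finite integral, which is acceptable since $C_\epsilon$ is allowed to blow up as $\epsilon\to 0$. Outside the singular zone one still recovers integrability by the $\lambda^2$-growth of the denominator, so no new idea beyond Case (i) is needed.
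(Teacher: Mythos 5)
Your proof is correct and follows essentially the same approach as the paper: for $r<3/4$ you use the uniform lower bound $|\lambda^2-\sqrt{1-\delta^2}-i\delta|\ge|\delta|\ge\epsilon$ on a compact $\lambda$-interval together with the same dyadic-type split at $\lambda\sim 1/r$ and the decay $|a_\pm(\lambda r)|\lesssim(\lambda r)^{-1/2}$, and for $r\ge 3/4$ you integrate by parts once to gain the factor $r^{-1}$, which is exactly the $k=1$ Van der Corput step the paper invokes. Your write-up is more explicit than the paper's (which compresses the $r\ge 3/4$ case into one line and does not display the squared-denominator term from the differentiation), but the underlying argument and the source of the $\epsilon$-dependence are the same.
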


\begin{proof}
We first consider the case that $r<3/4$. From \eqref{bean} and $|\delta|\geq \epsilon$, we obtain
 \begin{equation}
 \begin{split}
&\Big|\int_0^\infty \frac{\lambda}{\lambda^2-\sqrt{1-\delta^2}-i\delta}   a_\pm(\lambda r)e^{\pm i\lambda r}\, d\lambda\Big|\\
&\lesssim_\epsilon  \int_0^1\lambda \,d\lambda+\int_1^{\frac1r} \lambda^{-1}\, d\lambda+r^{-\frac12}\int_{\frac1r}^\infty \lambda^{-\frac32}\, d\lambda\leq C_\epsilon |\log r|.
\end{split}
\end{equation}
Next we consider the case that $r\geq3/4$.  From \eqref{bean} again, and by Van der Corput lemma, we  obtain
 \begin{equation*}
\Big|\int_0^\infty \frac{\lambda}{\lambda^2-\sqrt{1-\delta^2}-i\delta}   a_\pm(\lambda r)e^{\pm i\lambda r}\, d\lambda\Big|
\lesssim_\epsilon r^{-1}\int_0^\infty \frac{1}{|\lambda^2-\sqrt{1-\delta^2}|+\delta}\, d\lambda\leq C_\epsilon r^{-1}.
\end{equation*}

\end{proof}

\subsection{Case (iii): $\sigma=\sqrt{1-\delta^2}+i\delta$ and $|\delta|\leq \epsilon$.}
In this case, we will prove
\begin{theorem}\label{thm:LA0'} Let $1\leq p<\frac43$ and $4<q\leq \infty$ satisfy $\frac23\leq \frac1p-\frac1q<1$ and let $\LL_{{\A},0}$
be \eqref{LAa} with $a\equiv0$.
Then, for $\lambda>0$, there exists a constant $C=C(p,q)>0$ such that
\begin{equation}\label{est:resa0}
\|(\LL_{{\A},0}-(\lambda\pm i0))^{-1}f\|_{L^q(\R^2)}\leq C\lambda^{\frac1p-\frac1q-1}\|f\|_{L^p(\R^2)},
\end{equation}
where $(\LL_{{\A},0}-(\lambda\pm i0))^{-1}:=\lim\limits_{\epsilon\to 0^+}(\LL_{{\A},0}-(\lambda\pm i\epsilon))^{-1}$.
In particular, if $6\leq q<\infty$, then \eqref{est:resa-z} holds for $q=p'$.
\end{theorem}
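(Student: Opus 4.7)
By the homogeneity of $\LL_{\A,0}$ under $x\mapsto\lambda^{-1/2}x$, it is enough to establish \eqref{est:resa0} at $\lambda=1$; the prefactor $\lambda^{1/p-1/q-1}$ is then recovered by tracking how $L^p$ and $L^q$ norms transform under this dilation. Proposition \ref{prop:res-ker} combined with Lemma \ref{lem:res-k} expresses the resolvent kernel at $\lambda=1$ in terms of the Hankel splitting $H_0^{\pm}(r)=e^{\pm ir}r^{-1/2}a(r)+b(r)$, and so writes the full kernel as $K=K_{\mathrm{loc}}+K_{\mathrm{osc}}$. The local part $K_{\mathrm{loc}}$ collects the $b$-contributions and is supported in $\{|x-y|\le 3/4\}$ (respectively $\{|{\bf n}|\le 3/4\}$ for the diffractive piece); using \eqref{est:AB} and $|{\bf n}|\ge|x-y|$, one obtains the pointwise bound $|K_{\mathrm{loc}}(x,y)|\lesssim|\log|x-y||\,\chi_{|x-y|\le 3/4}$. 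The oscillatory part $K_{\mathrm{osc}}$ carries the phases $e^{\pm i|x-y|}$ and $e^{\pm i|{\bf n}|}$ with amplitudes decaying like $|x-y|^{-1/2}$ and $|{\bf n}|^{-1/2}$ respectively, weighted by the angular factors $A_\alpha$ and $B_\alpha$.

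\textbf{Local part via Young, oscillatory part via restriction.} The local contribution is handled exactly as in Case (i) of Section \ref{sec:thmmain}: the computation \eqref{equ:logxlp} places $|\log|x||\,\chi_{|x|\le 3/4}$ in $L^{(1+1/q-1/p)^{-1}}(\R^2)$ throughout the range $2/3\le 1/p-1/q<1$, and Young's convolution inequality yields the desired $L^p\to L^q$ bound. For $K_{\mathrm{osc}}$, I would follow the strategy of Guti\'errez and Kenig-Ruiz-Sogge, recasting the resolvent as the $TT^{\ast}$ of an extension operator built from the spectral projector $dE_{\sqrt{\LL_{\A,0}}}(1)$, and establishing a Stein-Tomas restriction bound of the form $\|dE_{\sqrt{\LL_{\A,0}}}(1)\|_{L^{6/5}\to L^{6}}\lesssim 1$. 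This corresponds to the endpoint $(p,q)=(6/5,6)$ lying on the line $1/p-1/q=2/3$; complex interpolation between this endpoint and the weighted-$L^2$ bounds of \cite{GWZZ} then fills out the pentagonal $(p,q)$-range of the theorem.

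\textbf{Main obstacle.} The delicate step is the diffractive integral $\int_0^\infty[\cdot]\,ds$, which has no Euclidean analog. The phase $|{\bf n}(s)|=\sqrt{r_1^2+r_2^2+2r_1r_2\cosh s}$ is convex but non-linear in $s$, and the amplitude $B_\alpha(s,\theta_1,\theta_2)$ in \eqref{B-al} only decays exponentially in $s$. To estimate this uniformly in the spatial parameters, I would combine Lemmas \ref{lem:absdec} and \ref{lem:VCL} in the $s$-variable with stationary-phase analysis near $s=0$, in the spirit of the conic Schr\"odinger analysis of Blair-Ford-Marzuola. This is precisely the step in which the Stein-H\"ormander oscillatory integral machinery must be modified to absorb the diffractive singularity of the kernel, and it is the main technical ingredient of the proof.
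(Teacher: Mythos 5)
Your high-level scaffolding matches the paper: reduce to $\lambda=1$ by scaling, split the kernel via the Hankel decomposition into a logarithmic local piece (handled by Young) and an oscillatory piece (where the real work is), and treat the diffractive $\int_0^\infty ds$ integral with Van der Corput estimates near $s=0$ in the spirit of Blair--Ford--Marzuola. But there are two genuine gaps in the way you propose to close the argument.

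First, your route to the full pentagonal $(p,q)$-range does not work as stated. You propose to prove a Stein--Tomas bound $\|dE_{\sqrt{\LL_{\A,0}}}(1)\|_{L^{6/5}\to L^6}\lesssim 1$ and then interpolate (with Stein's complex interpolation) against the weighted-$L^2$ resolvent bounds of \cite{GWZZ}. This is precisely the ``general philosophy'' that the paper explicitly flags as failing in dimension $n=2$: the weighted-$L^2$ interpolation scheme used in \cite{Miz,MZZ} to fill the Kenig--Ruiz--Sogge range requires the uniform Sobolev endpoint $\tfrac1p-\tfrac1q=\tfrac{2}{n}$, which in $\R^2$ is the logarithmically singular corner and is false. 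Instead the paper dyadically decomposes $K_{\mathrm{osc}}$ in the distance variable ($\beta(2^{-j}|x-y|)$ for $G_1$, $\beta(2^{-j}(r_1+r_2))$ for $D_1$), proves $j$-dependent operator bounds $\|T^j\|_{L^p\to L^q}\lesssim 2^{-2j(1/p-3/4)}$ on the ray $q\geq 3p'$ (Lemmas~\ref{lem:TGj}, \ref{lem:TDj}), and then obtains the pentagon by summing in $j$ (region $ABC$), duality (region $A'B'C$), convexity (region $BB'C$), and a Bourgain-style real interpolation with Lorentz spaces on the boundary line $BB'$ (choosing the cutoff index $N$ to balance the $L^2$ and $L^\infty$ pieces). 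Your proposal contains no dyadic decomposition and no mechanism to reach exponents with $1/p - 1/q > 2/3$, so it would not cover the claimed range.

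Second, you have overlooked a technical obstruction that the paper spends most of its effort on: the angular amplitude $A_\alpha(\theta_1,\theta_2)$ in \eqref{A-al} contains the jump function $\mathbbm{1}_{[0,\pi]}(|\theta_1-\theta_2|)$, and the diffractive amplitude $B_\alpha(s,\theta_1,\theta_2)$ in \eqref{B-al} has a singularity as $s\to 0$ of the form $\sin(\theta_1-\theta_2+\pi)/(\cosh s-\cos(\theta_1-\theta_2+\pi))$, i.e.\ a near-resonant $1/(s^2+b^2)$ with $b=\sqrt2\sin(\tfrac{\theta_1-\theta_2+\pi}{2})$. Neither of these is smooth, so the stationary-phase/Van der Corput machinery cannot be applied directly; the paper removes the jump by reducing the $\theta_1$-integral to an interval $[a,b]$ (cf.\ \eqref{Kj}), and isolates the singular piece of $\psi_3$ into the model kernel $H$ which is then compared to the conic-operator analysis of \cite{BFM18}. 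A phrase like ``modify the Stein--H\"ormander machinery'' is not a substitute for these steps; as written, the proposal does not show how the Van der Corput lemma produces the claimed $(2^jr_1r_2)^{-1/2}$ gain once the derivative lands on the $B_\alpha$ factor.
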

We postpone the proof of the key theorem in next section.
If taking $\epsilon$ small enough, we can
use Theorem \ref{thm:LA0'} with $\lambda=(1-\delta^2)^{\frac12}$ to obtain
\begin{equation}
\begin{split}
\|(\LL_{{\A},0}-\sigma)^{-1}f\|_{L^q(\R^2)}&\leq C(1-\delta^2)^{ (\frac1p-\frac1q-1)}\|f\|_{L^p(\R^2)}\leq C\|f\|_{L^p(\R^2)}.
\end{split}
\end{equation}

\section{The proof of Theorem \ref{thm:LA0'}}\label{sec:thmla0}
In this section, we prove the critical theorem about the limitaton absorbing theorem, that is, Theorem \ref{thm:LA0'}.
By the scaling invariant of the operator $\LL_{{\A},0}$ again, it suffices to prove \eqref{est:resa0} when $\lambda=1$, that is,
\begin{equation}\label{est:res1}
\|(\LL_{{\A},0}-(1\pm i0))^{-1}f\|_{L^q(\R^2)}\leq C\|f\|_{L^p(\R^2)}.
\end{equation}
From Proposition \ref{prop:res-ker} and Lemma \ref{lem:res-k}, we obtain
\begin{align}\label{eq:log1}
  (\LL_{{\A},0}-(1\pm i0))^{-1}(x,y)=&\frac{i}{4\pi}\big(G_1+G_2+D_1+D_2\big)(r_1,\theta_1;r_2,\theta_2),
\end{align}
where the kernels are defined by
\begin{equation}\label{ker-G}
\begin{split}
G_1(r_1,\theta_1;r_2,\theta_2)&=e^{i|x-y|} |x-y|^{-1/2} a(|x-y|)A_\alpha(\theta_1,\theta_2),\\
G_2(r_1,\theta_1;r_2,\theta_2)&=b(|x-y|)A_\alpha(\theta_1,\theta_2),
\end{split}
\end{equation}
and
\begin{equation}\label{ker-D}
\begin{split}
D_1(r_1,\theta_1;r_2,\theta_2)&= \int_0^\infty e^{i|{\bf n}|} |{\bf n}|^{-1/2} a(|{\bf n}|)\, B_\alpha(s,\theta_1,\theta_2)\;ds,\\
D_2(r_1,\theta_1;r_2,\theta_2)&= \int_0^\infty b(|{\bf n}|)\, B_\alpha(s,\theta_1,\theta_2)\;ds.
\end{split}
\end{equation}
Here $A_\alpha(\theta_1,\theta_2)$, $B_\alpha(s,\theta_1,\theta_2)$ are  as in Proposition \ref{prop:res-ker} and the functions $a$ and $b$
are as in Lemma \ref{lem:res-k}.
Hence, we write that
\begin{align*}
   & (\LL_{{\A},0}-(1\pm i0))^{-1}f(x)\\
=&\frac{i}{4\pi}\big(T_{G_1}f+T_{G_2}f+T_{D_1}f+T_{D_2}f\big)(x),
\end{align*}
where
$$T_{K}f(x)=\int_0^\infty \int_0^{2\pi }K(r_1,\theta_1;r_2,\theta_2) f(r_2,\theta_2)  d\theta_2 \,r_2 dr_2.$$
Thus, to prove \eqref{est:res1}, it suffices to prove that there exists a constant $C$ such that
\begin{equation}\label{equ:goalredu}
\|T_{K}\|_{L^p(\R^2)\to L^q(\R^2)}\leq C, \quad K\in\{G_1,G_2,D_1,D_2\},
\end{equation}
where $p,q$ are in Theorem \ref{thm:LA0'}.

\subsection{The estimations of $T_{G_2}$ and $T_{D_2}$.}

We first consider the two easy ones $T_{G_2}$ and $T_{D_2}$.
By \eqref{equ:logxlp}, \eqref{est:b} and the Young’s inequality, we deduce that for $1\leq q, p\leq \infty$ and $0\leq \frac1p-\frac1q<1$
\begin{equation}\label{equ:T2est}
\begin{split}
\|T_{G_2}f\|_{L^q(\R^2)}&\leq \Big\|\int_{\R^2} |b(|x-y|)| |f(y)|dy\Big\|_{L^q(\R^2)}\\
&\leq \Big\|\int_{\R^2} |\log|x-y||\chi_{|x-y|\leq \frac34} |f(y)|dy\Big\|_{L^q(\R^2)}\leq C\|f\|_{L^p(\R^2)}.
\end{split}
\end{equation}
And so it follows \eqref{equ:goalredu} with $K=G_2$.

Next, we turn to estimate the contribution from the term $T_{D_2}.$
We claim that
\begin{equation}\label{equ:g2claim}
  |D_2(r_1,\theta_1;r_2,\theta_2)|\lesssim |\log(r_1+r_2)|\chi_{r_1+r_2\leq \frac34}.
\end{equation}
Using this claim and the fact that $|x-y|\leq r_1+r_2$, we obtain \eqref{equ:goalredu} with $K=D_2$ by the same argument as \eqref{equ:T2est}.
It remains to prove the claim \eqref{equ:g2claim}.
Since $|{\bf n}|^2=r_1^2+r_2^2+2r_1r_2\cosh s\geq (r_1+r_2)^2$, from Proposition \ref{prop:res-ker} and Lemma \ref{lem:res-k}, one has
\begin{equation}
\begin{split}
&|D_2(r_1,\theta_1;r_2,\theta_2)|= \frac1{\pi}\Big|\int_0^\infty b(|{\bf n}|)\, B_\alpha(s,\theta_1,\theta_2)\;ds\Big|\\
&\leq C\int_0^\infty |\log(|{\bf n}|)|\chi_{|{\bf n}|\leq 1}\, \Big(|\sin(|\alpha|\pi)|e^{-|\alpha|s}\\\nonumber
  &\qquad +|\sin(\alpha\pi)|\cdot\Big|\frac{(e^{-s}-\cos(\theta_1-\theta_2+\pi))\sinh(\alpha s)-i\sin(\theta_1-\theta_2+\pi)\cosh(\alpha s)}{\cosh(s)-\cos(\theta_1-\theta_2+\pi)}\Big|\Big)\;ds.
\end{split}
\end{equation}
Note that $D_2(r_1,\theta_1,r_2,\theta_2)=0$ when $\alpha=0$. We always assume $\alpha\neq 0$.
Thus
\begin{equation}
\begin{split}
&|D_2(r_1,\theta_1;r_2,\theta_2)|\\
\leq& C |\log(r_1+r_2)|\chi_{r_1+r_2\leq \frac34} \int_0^\infty \Big(|e^{-|\alpha|s}\\\nonumber
  &\qquad +\Big|\frac{(e^{-s}-\cos(\theta_1-\theta_2+\pi))\sinh(\alpha s)-i\sin(\theta_1-\theta_2+\pi)\cosh(\alpha s)}{\cosh(s)-\cos(\theta_1-\theta_2+\pi)}\Big|\Big)\;ds\\
  \leq& C |\log(r_1+r_2)|\chi_{r_1+r_2\leq \frac34},
\end{split}
\end{equation}
where we have used the facts (proved in \cite{FZZ}) that for $\alpha\in(-1,1)\backslash\{0\}$
  \begin{align}\label{equ:ream1}
    \int_0^\infty e^{-|\alpha|s}\;ds\lesssim&1,\\\label{equ:ream2}
    \int_0^\infty  \Big|\frac{(e^{-s}-\cos(\theta_1-\theta_2+\pi))\sinh(\alpha s)}{\cosh(s)-\cos(\theta_1-\theta_2+\pi)}\Big|\;ds\lesssim&1,
    \\\label{equ:ream3}
     \int_0^\infty  \Big|\frac{\sin(\theta_1-\theta_2+\pi)\cosh(\alpha s)}{\cosh(s)-\cos(\theta_1-\theta_2+\pi)}\Big|\;ds\lesssim&1.
  \end{align}
Hence, we show the claim \eqref{equ:g2claim}. Therefore, we obtain \eqref{equ:goalredu} with $K=D_2$.

\subsection{The estimation of $ T_{G_1}$.} In this subsection, we prove
\begin{equation}\label{equ:tg1red}
  \|T_{G_1}\|_{L^{p}\to L^{q}}\leq C.
\end{equation}
 To consider the $T_{G_1}$  which is more complicated, we have to exploit the oscillation property.
To this end, recall $A_\alpha(\theta_1,\theta_2)$ in \eqref{A-al}, it suffices to prove
\begin{equation}
\begin{split}
\Big\|\int_0^\infty \int_0^{2\pi } K_G(r_1,r_2;\theta_1-\theta_2) f(r_2,\theta_2) r_2 dr_2 d\theta_2\Big\|_{L^q(\R^2)}\leq C\|f\|_{L^p(\R^2)}
\end{split}
\end{equation}
where
\begin{equation}
\begin{split}
K_G(r_1,r_2;\theta_1-\theta_2)&=e^{i|x-y|} |x-y|^{-1/2} a(|x-y|)
\mathbbm{1}_{I}(|\theta_1-\theta_2|),
\end{split}
\end{equation}
and $I=[0,\pi]$ or $[\pi,2\pi]$. Note that $|x-y|=\sqrt{r_1^2+r_2^2-2r_1r_2\cos(\theta_1-\theta_2)}$, we can only consider $I=[0,\pi]$ due to transmitting  invariant in $\theta$-variable.
Using the partition of one
$$\beta_0(r)=1-\sum_{j\geq1}\beta(2^{-j}r),\quad \beta\in\mathcal{C}_c^\infty\big(\big[\tfrac34,\tfrac83\big]\big),$$ we decompose
\begin{equation}\label{KGj}
\begin{split}
K_G(r_1,r_2;\theta_1-\theta_2)
=:\sum_{j\geq 0}K_G^j(r_1,r_2;\theta_1-\theta_2),
\end{split}
\end{equation}
where
$$K_G^j(r_1,r_2;\theta_1-\theta_2)=\beta(2^{-j}|x-y|)K_G(r_1,r_2;\theta_1-\theta_2), \quad j\geq1$$
and $K_G^0(r_1,r_2;\theta_1-\theta_2)=\beta_0(|x-y|)K_G(r_1,r_2;\theta_1-\theta_2)$.
Then we have
\begin{lemma}\label{lem:TGj} Let $T_{G_1}^j$ be the operator associated with kernel $K_{G}^j(r_1,r_2;\theta_1-\theta_2)$
given in \eqref{KGj} and let $6\leq 3p'\leq q$ and $6\leq q\leq \infty$.
Then
 \begin{equation}\label{est:TGj}
 \|T_{G_1}^j\|_{L^p\to L^q}\leq C 2^{-2j(\frac{1}{p}-\frac34)},\quad \forall j\geq0.
\end{equation}
\end{lemma}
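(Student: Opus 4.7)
My plan splits into the elementary $j=0$ case, a trivial H\"older endpoint on the slice $q=\infty$, and the substantive Stein-Tomas type endpoint at $(p,q)=(4/3,4)$; these endpoints will then be combined by Riesz-Thorin interpolation.

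For $j=0$, the cutoff $\beta_0(|x-y|)$ restricts matters to $|x-y|\lesssim 1$; since $a(r)\equiv 0$ for $0<r<\tfrac12$ by \eqref{est:a}, the kernel $K_G^0$ is actually supported in $\{\tfrac12\lesssim|x-y|\lesssim 1\}$ and pointwise bounded. Young's inequality yields $\|T_{G_1}^0\|_{L^p\to L^q}\lesssim 1$, which matches the target bound $2^{-2j(1/p-3/4)}|_{j=0}=1$.

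For $j\geq 1$, the pointwise bound $|K_G^j(x,y)|\lesssim 2^{-j/2}\chi_{\{|x-y|\sim 2^j\}}$ (from \eqref{est:a} and the boundedness of $A_\alpha$) and H\"older's inequality give
\[
\|T_{G_1}^j\|_{L^p\to L^\infty}\leq \sup_x\|K_G^j(x,\cdot)\|_{L^{p'}}\lesssim 2^{-j/2}(2^{2j})^{1/p'}=2^{-2j(1/p-3/4)},
\]
which is exactly the target bound along the slice $q=\infty$. The second, substantive endpoint I would establish is the Stein-Tomas type inequality
\[
\|T_{G_1}^j\|_{L^{4/3}\to L^{4}}\lesssim 1,
\]
which is the target at $(p,q)=(4/3,4)$ since $2^{-2j(3/4-3/4)}=1$. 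Because the bound $2^{-2j(1/p-3/4)}$ is log-linear in $1/p$ alone, Riesz-Thorin interpolation between these two endpoints (together with the trivial $L^{4/3}\to L^\infty$ bound provided by H\"older above) preserves the exponent and covers the full region $\{1/p\geq 1/2,\ 1/p+3/q\leq 1,\ 1/q\leq 1/6\}$ described by the hypotheses $6\leq 3p'\leq q$ and $6\leq q\leq \infty$.

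To prove the Stein-Tomas endpoint I would rescale $x\mapsto 2^{j}x$, reducing $T_{G_1}^j$ to an oscillatory integral operator $\tilde T_\lambda$ at frequency $\lambda=2^j$ with phase $\Phi(x,y)=|x-y|$ and amplitude localized to $|x-y|\sim 1$, and then run a $TT^*$ argument: the composition kernel has the form $\int e^{i\lambda(|x-y|-|x-y'|)}\rho(x;y,y')\,dx$, whose phase is stationary precisely when $x$ lies on the line through $y$ and $y'$. Non-degenerate stationary phase in the direction transverse to that line produces the decay $\lambda^{-1/2}|y-y'|^{-1/2}$, and the Hardy-Littlewood-Sobolev inequality then yields $\|TT^*\|_{L^{4/3}\to L^{4}}\lesssim \lambda^{-1}$, hence the required bound on $\|\tilde T_\lambda\|_{L^{4/3}\to L^{4}}$. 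The main obstacle, as flagged in the introduction, is that the magnetic factor $A_\alpha(\theta_1,\theta_2)$ in \eqref{A-al} contains the indicator $\mathbbm{1}_{[0,\pi]}(|\theta_1-\theta_2|)$, producing a genuine discontinuity at the antipodal set $|\theta_1-\theta_2|=\pi$; standard Stein-H\"ormander oscillatory integral theory does not apply directly across this jump, and one must perform a further angular decomposition and adapt the theory to the conically singular geometry, in the spirit of the Blair-Ford-Marzuola treatment \cite{BFM18} cited in the introduction.
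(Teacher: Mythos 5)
Your $j=0$ step and the $L^p\to L^\infty$ slice via H\"older/Young match the paper's argument, but the substantive endpoint you propose is wrong in a way that breaks the whole scheme. You claim a Stein--Tomas endpoint $\|T_{G_1}^j\|_{L^{4/3}\to L^4}\lesssim 1$. That is the \emph{three-dimensional} Stein--Tomas exponent ($q=2(n+1)/(n-1)=4$ only when $n=3$); in two dimensions the correct exponent is $q=6$, and the vertex the paper actually runs its $TT^*$ argument at is $(p,q)=(2,6)$ (the point $C$ in Figure~2 of the paper, with $\|T_{G_1}^j\|_{L^2\to L^6}\lesssim 2^{j/2}$). The point $(4/3,4)$ does not even lie in the region of the lemma, since $3p'\leq q$ at $p=4/3$ forces $q\geq 12$; worse, because the Sogge/Carleson--Sj\"olin estimate for kernels $e^{i|x-y|}|x-y|^{-1/2}\chi(|x-y|\sim 2^j)$ in $\R^2$ is sharp on the line $q=3p'$ (Knapp examples), a $j$-uniform bound at $q=4<12$ is not expected to hold. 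And even granting it, Riesz--Thorin from $(1/p,1/q)=(3/4,1/4)$ together with the $1/q=0$ slice does not reach the lemma's region: the vertex $(1/2,1/6)$ is not in the convex hull of $\{(3/4,1/4)\}\cup\{1/q=0\}$, as a direct computation of barycentric coordinates shows. So the interpolation step cannot close. Relatedly, your stated $TT^*$ kernel decay $\lambda^{-1/2}|y-y'|^{-1/2}$ fed into Hardy--Littlewood--Sobolev in $\R^2$ does not produce $L^{4/3}\to L^4$; the arithmetic is the 3D one.

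The other gap is that you flag the jump factor $\mathbbm{1}_{[0,\pi]}(|\theta_1-\theta_2|)$ as the obstacle but do not actually deal with it, and this is the heart of the paper's proof. After localizing $f$ to $\theta_2\in[0,\epsilon)$, the authors split $\theta_1$ into $[0,\pi]$ (the indicator is identically $1$, so Sogge's Lemma~5.4 applies directly and gives $q=3p'$ for all $1\leq p<4$), $[\pi+\epsilon,2\pi]$ (kernel vanishes), and the thin transitional band $[\pi,\pi+\epsilon]$. Only in the last band is the jump genuinely present, and there the authors run the $TT^*$ argument angularly (Lemma~\ref{lem:key1}): the key observation is that on the support of the angular cutoffs the two indicators $\mathbbm{1}_{[0,\pi]}(|\theta_1-\theta_2|)\mathbbm{1}_{[0,\pi]}(|\theta_1-\theta_2'|)$ reduce to the constraint $\theta_1\leq\min\{\theta_2+\pi,\theta_2'+\pi\}$, so the jump can be absorbed into the integration limits of the $\theta_1$-integral and then Van der Corput/stationary phase applies on a genuine interval; an $L^1\to L^\infty$ and an $L^2\to L^2$ bound are interpolated to $L^{6/5}\to L^6$ in $\theta$, and Hardy--Littlewood--Sobolev in the radial variable finishes the $(2,6)$ endpoint. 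Without an argument along these lines the lemma is not proved.
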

We postpone the proof of \eqref{est:TGj} for a moment.
Now, we use Lemma \ref{lem:TGj} to prove \eqref{equ:tg1red}.
We divide the proof into several cases.

{\bf Case 1:} $\big(\tfrac1p,\tfrac1q\big)$ is in  the region $ABC$ of Figure 1. By Lemma \ref{lem:TGj}, we deduce that
for $1\leq p<\frac{4}{3}, q\geq 3p'$, there holds
\begin{equation}\label{equ:t1abc}
  \|T_{G_1}\|_{L^{p}\to L^{q}}\leq C\sum_{j\geq0} \|T_{G_1}^j\|_{L^{p}\to L^{q}}\leq C.
\end{equation}

{\bf Case 2:} $\big(\tfrac1p,\tfrac1q\big)$ is in  the region $A'B'C$ of Figure 1. By \eqref{equ:t1abc} and duality, we obtain
$$ \|T_{G_1}\|_{L^{p}\to L^{q}}\leq C$$ provided that $1\leq q'<\frac{4}{3}, p'\geq 3q$.

{\bf Case 3:} $\big(\tfrac1p,\tfrac1q\big)$ is in  the region $BB'C$ of Figure 1. By Case 1 and Case 2, we use the interpolation to get \eqref{equ:tg1red} for $\big(\tfrac1p,\tfrac1q\big)$ being in  the region $BB'C$ of Figure 1.

{\bf Case 4:} $\big(\tfrac1p,\tfrac1q\big)$ is in  the line $BB'$ of Figure 1, i.e.  $\frac1p-\frac1q=\frac23$. We follow the argument of \cite[Theorem 6]{Gut}. By the real interpolation, it suffices to prove
 \begin{equation}
 \|T_{G_1}\|_{L^{p,1}\to L^{q,\infty}}\leq C ,\quad (q,p)=\big(4,\tfrac{12}{11}\big)\, \text{and}\, \big(12,\tfrac43\big),
\end{equation}
where $L^{p,1}$ and $L^{q,\infty}$ are the Lorentz spaces. By \cite[Theorem V.3.13 and Theorem V.3.21]{Stein-W},
we need to prove
\begin{equation}\label{TGjB}
\sigma\big|\{x\in\R^2:|T_{G_1}\mathbbm{1}_E(x)|>\sigma\}\big|^{\frac1q}\leq C|E|^{\frac1p},\quad \forall \sigma>0, \,|E|<\infty,
\end{equation}
where $|E|$ is the measure of $E\subset \R^2$ and $\mathbbm{1}_E$ denotes its characteristic function as above. Let
$$A=\{x\in\R^2:|T_{G_1}\mathbbm{1}_E(x)|>\sigma\},$$
then we have
\begin{equation}\label{A}
\begin{split}
|A|&=\int_{A} dx\leq \frac1{\sigma}\int_{\R^2} |T_{G_1}\mathbbm{1}_E(x)|\mathbbm{1}_A(x) dx\\
&\leq \frac1{\sigma}\sum_{j\geq0}\int_{\R^2} |T^j_{G_1}\mathbbm{1}_E(x)|\mathbbm{1}_A(x) dx\\
&\leq \frac1{\sigma}\Big(\sum_{0\leq j\leq N}\|T^j_{G_1}\mathbbm{1}_E(x)\|_{L^2(\R^2)} |A|^{\frac12}+\sum_{ j\geq N+1}\|T^j_{G_1}\mathbbm{1}_E(x)\|_{L^\infty(\R^2)} |A|\Big),
\end{split}
\end{equation}
where $N\in\N$ will be determined lately.
From \eqref{est:TGj}, we have
 \begin{equation*}
 \|T_{G_1}^j\|_{L^1\to L^\infty}\leq C 2^{-\frac{j}2},\qquad  \|T_{G_1}^j\|_{L^2\to L^6}\leq C 2^{\frac{j}2}.
\end{equation*}
By duality, we also have
$$\|T_{G_1}^j\|_{L^{\frac65}\to L^2}\leq C 2^{\frac{j}2}.$$
We plug these into \eqref{A} to show
\begin{equation}
\begin{split}
\sigma |A|
&\leq C\Big( \sum_{0\leq j\leq N} 2^{\frac{j}2} |E|^{\frac56} |A|^{\frac12}+\sum_{ j\geq N+1}2^{-\frac j2}|E| |A|\Big)\\
&\leq C\Big( 2^{\frac{N}2} |E|^{\frac56} |A|^{\frac12}+2^{-\frac {N+1}2}|E| |A|\Big)\leq C|E|^{\frac{11}{12}} |A|^{\frac34}
\end{split}
\end{equation}
in which we choose $N$ such that $2^{\frac{N}2}\leq |E|^{\frac{1}{12}} |A|^{\frac14}\leq 2^{\frac{N+1}2}$. Therefore, we obtain
$$\sigma|A|^{\frac14}\leq C|E|^{\frac{11}{12}}$$ which implies \eqref{TGjB} with $(q,p)=\big(4,\tfrac{12}{11}\big)$.
To prove \eqref{TGjB} with $(q,p)=\big(12,\tfrac{4}{3}\big)$, we similarly show
\begin{align*}
   |A|
\leq& \frac1{\sigma}\Big(\sum_{0\leq j\leq N}\|T^j_{G_1}\mathbbm{1}_E(x)\|_{L^6(\R^2)} |A|^{\frac56}+\sum_{ j\geq N+1}\|T^j_{G_1}\mathbbm{1}_E(x)\|_{L^\infty(\R^2)} |A|\Big)\\
\lesssim& \frac1{\sigma}\Big( \sum_{0\leq j\leq N} 2^{\frac{j}2} |E|^{\frac12} |A|^{\frac56}+\sum_{ j\geq N+1}2^{-\frac j2}|E| |A|\Big)\\
\lesssim& \frac1{\sigma}\Big( 2^{\frac{N}2} |E|^{\frac12} |A|^{\frac56}+2^{-\frac {N+1}2}|E| |A|\Big)\leq C|E|^{\frac{3}{4}} |A|^{\frac{11}{12}}
\end{align*}
in which we choose $N$ such that $2^{\frac{N}2}\leq |E|^{\frac{1}{4}} |A|^{\frac1{12}}\leq 2^{\frac{N+1}2}$.
Hence, we obtain
$$\sigma|A|^{\frac1{12}}\leq C|E|^{\frac{3}{4}}$$ which gives \eqref{TGjB} with $(q,p)=\big(12,\tfrac{4}{3}\big)$.

In sum, we conclude the proof of the estimate \eqref{equ:tg1red} by using Lemma \ref{lem:TGj}. Therefore, it remains to show Lemma \ref{lem:TGj}.

\begin{proof}[{\bf The proof of Lemma \ref{lem:TGj}:}] The argument follows from the oscillatory integral theory by Stein \cite{Stein} and H\"ormander \cite{Hor}. However, there is a jump function $\mathbbm{1}_{[0,\pi]}(|\theta_1-\theta_2|)$ in the kernel so that the estimates are not trivial
consequence of the standard theory.

{\bf Step 1: $j=0$.}  Since $\beta_0\in L^{r}(\R^2)$ with $1\leq r<\infty$, for $1\leq q, p\leq \infty$ and $0\leq \frac1p-\frac1q<1$, then the Young inequality gives
$$ \|T_{G_1}^0f\|_{L^q}\leq \Big\|\int_{\R^2} \beta_0(|x-y|) f(y) dy\Big\|_{L^q(\R^2)}\leq C\|f\|_{L^p(\R^2)}.
$$

{\bf Step 2: $j\geq1.$}
Let $0<\epsilon\ll 1$ small enough, by finite covering lemma, we cover
$[0,2\pi]$ by $\Theta_\epsilon=\{\theta: |\theta-\theta_{k,0}|\leq \epsilon\}$ with fixed $\{\theta_{k,0}\}_{k=1}^{N}\in [0,2\pi]$.
 Noting that
$$K_G^j(r_1,r_2;\theta_1-\theta_2)=\beta(2^{-j}|x-y|)K_G(r_1,r_2;\theta_1-\theta_2),$$
and using translation invariance in norm $L^q_\theta$,  we only need to prove that  \eqref{est:TGj} holds for
\begin{equation}\label{assu:f}
\text{supp} f\subset \{(r_2,\theta_2): r_2\in(0,\infty),  \theta_2\in[0,\epsilon)\},\quad 0<\epsilon\ll 1.
\end{equation}
Note that if $\theta_2\in[0,\epsilon]$, then
\begin{equation}
\mathbbm{1}_{[0,\pi]}(|\theta_1-\theta_2|)=
\begin{cases}1\quad \text{when}\,\theta_1\in [0,\pi],\\
0\quad \text{when}\,\theta_1\in [\pi+\epsilon,2\pi].
\end{cases}
\end{equation}

We divide the proof into three cases.

{\bf Case 1: $\theta_1\in[\pi+\epsilon,2\pi].$}  Since
$$\mathbbm{1}_{[\pi+\epsilon,2\pi]}(\theta_1) K_G^j(r_1,r_2;\theta_1-\theta_2)=0,$$
we have $\mathbbm{1}_{[\pi+\epsilon,2\pi]}(\theta_1)T_{G_1}^j  f=0$. Thus,
$$\Big\|\mathbbm{1}_{[\pi+\epsilon,2\pi]}(\theta_1)T_{G_1}^j f\Big\|_{L^q(\R^2)}\leq C 2^{\frac{2j}{p'}}2^{-\frac j2}\|f\|_{L^p}.$$

{\bf Case 2: $\theta_1\in[0,\pi].$}
In this case, we have
 $$\mathbbm{1}_{[0,\pi]}(\theta_1) K_G^j(r_1,r_2;\theta_1-\theta_2)=\mathbbm{1}_{[0,\pi]}(\theta_1) \beta(2^{-j}|x-y|)e^{i|x-y|} |x-y|^{-1/2} a(|x-y|).$$
Then, the jump function disappears. Hence, we can use the oscillation integral results to obtain, for $q=3p'$ and $1\leq p<4$
\begin{equation}
\begin{split}
&\Big\|\mathbbm{1}_{[0,\pi]}(\theta_1)T_{G_1}^j f\Big\|_{L^q(\R^2)}\\
\leq& \Big\|\int_{\R^2}\beta(2^{-j}|x-y|)e^{i|x-y|} |x-y|^{-1/2} a(|x-y|)f(y) dy \Big\|_{L^q(\R^2)}\\
\leq& C 2^{\frac{2j}{p'}}2^{-\frac j2}\|f\|_{L^p},
\end{split}
\end{equation}
in which we use \cite[Lemma 5.4]{Sogge} in the last inequality. Similarly, by Young's inequality, we obtain
\begin{equation}\label{equ:3.24es}
\begin{split}
&\Big\|\mathbbm{1}_{[0,\pi]}(\theta_1)T_{G_1}^j f\Big\|_{L^\infty(\R^2)}\\
\leq& \Big\|\int_{\R^2}\beta(2^{-j}|x-y|)e^{i|x-y|} |x-y|^{-1/2} a(|x-y|)f(y) dy \Big\|_{L^\infty(\R^2)}\\
\leq& C 2^{\frac{2j}{p'}}2^{-\frac j2}\|f\|_{L^p}.
\end{split}
\end{equation}
Therefore, by interpolation,  we obtain
\begin{equation}
\begin{split}
\Big\|\mathbbm{1}_{[0,\pi]}(\theta_1)T_{G_1}^j f\Big\|_{L^q(\R^2)}
\leq C 2^{-2j(\frac1p-\frac34)}\|f\|_{L^p},
\end{split}
\end{equation}
provided $q\geq3p'$ and $1\leq p<4$.

{\bf Case 3: $\theta_1\in[\pi,\pi+\epsilon].$}
Note $f$ in \eqref{assu:f}, we are left to prove
\begin{equation}\label{equ:case3pi}
 \Big\|\mathbbm{1}_{[\pi,\pi+\epsilon]}(\theta_1) T_{G_1}^j \mathbbm{1}_{[0,\epsilon]} f\Big\|_{L^q(\R^2)}\lesssim 2^{j(\frac2{p'}-\frac12)} \| f\|_{L^p(\R^2)}
\end{equation}
for $6\leq 3p'\leq q\leq+\infty,$ i.e. $\big(\tfrac1p,\tfrac1q\big)$ is in the region $ABC$ of Figure 2.

\begin{center}
 \begin{tikzpicture}[scale=0.6]
 \draw[->] (0,0) -- (6,0) node[anchor=north] {$\frac1p$};
\draw[->] (0,0) -- (0,3)  node[anchor=east] {$\frac1q$};
\path (3,-1.5) node(caption){Fig 2. };  

\draw { (2.5,0) -- (5,0) -- (2.5, 0.9) -- (2.5,0) } [fill=gray!90]; 
 \draw (0,0) node[anchor=north] {O}
 (2.5,0) node[anchor=north] {$\frac12$}
 (5,0) node[anchor=north] {$1$}
 (2.2,0.3) node[anchor=north] {A}
 (5.3,0.3) node[anchor=north] {B};

\draw  (0, 0.9) node[anchor=east] {$\frac16$}
(2.5, 0.9) node[anchor=east] {C};

 \draw[thick]
 (2.5, 0.9) -- (5,0)
 (2.5, 0.9) -- (2.5,0)
(-0.05, 0.9)-- (0.05, 0.9)
(2.5,-0.05) -- (2.5,0.05)  (5,-0.05) -- (5,0.05);  


\end{tikzpicture}

\end{center}

If $\big(\tfrac1p,\tfrac1q\big)$ is in the line $AB$, i.e. $q=+\infty.$ By Young's inequality, we have
\begin{equation}
\begin{split}
&\Big\|\mathbbm{1}_{[\pi,\pi+\epsilon]}(\theta_1) T_{G_1}^j \mathbbm{1}_{[0,\epsilon]} f\Big\|_{L^\infty(\R^2)}\\
\lesssim&\Big\| \sup_{x\in\R^2}\int_{\R^2}\beta(2^{-j}|x-y|) |x-y|^{-1/2} |f(y)| dy\Big\|_{L^\infty(\R^2)} \leq C2^{\frac{2j}{p'}}2^{-\frac j2}\|f\|_{L^p(\R^2)}.
\end{split}
\end{equation}

By interpolation, we are reduced to show that \eqref{equ:case3pi} holds for $(p,q)=(2,6)$,
that is,
\begin{equation}\label{equ:case3pis2}
 \Big\|\mathbbm{1}_{[\pi,\pi+\epsilon]}(\theta_1) T_{G_1}^j \mathbbm{1}_{[0,\epsilon]} f\Big\|_{L^6(\R^2)}\lesssim 2^{\frac{j}2} \| f\|_{L^2(\R^2)}.
\end{equation}
We remark that one could utilize $TT^\ast$-argument by considering $L^2$-space.

Recall
$$K^j_G(r_1,r_2;\theta_1-\theta_2)=\beta(2^{-j}|x-y|)e^{i|x-y|} |x-y|^{-1/2} a(|x-y|)
\mathbbm{1}_{[0,\pi]}(|\theta_1-\theta_2|),$$
and let
$$\tilde{K}^j_G(r_1,r_2;\theta_1-\theta_2)=K^j_G(2^jr_1,2^jr_2;\theta_1-\theta_2).$$
Define the operator
\begin{equation}
\begin{split}
\tilde{T}_{G_1}^j f &=\int_0^\infty\int_0^{2\pi} \tilde{K}^j_G(r_1,r_2;\theta_1-\theta_2)\eta(\theta_1-\theta_2) f(r_2,\theta_2) \,d\theta_2 \,r_2 dr_2,
\end{split}
\end{equation}
where $ \eta(s) \in C_c^\infty ([\pi-2\epsilon,\pi+2\epsilon])$ such that $\eta(s)=1$ when $s\in[\pi-\epsilon,\pi+\epsilon]$.
One can check that
$$\mathbbm{1}_{[\pi,\pi+\epsilon]}T_{G_1}^j \big(\mathbbm{1}_{[0,\epsilon]} f(r_2,\theta_2)\big)(r_1,\theta_1)=2^{2j}\mathbbm{1}_{[\pi,\pi+\epsilon]}\tilde{T}_{G_1}^j \big(\mathbbm{1}_{[0,\epsilon]} f(2^jr_2,\theta_2)\big)(2^{-j}r_1,\theta_1).$$

To prove \eqref{equ:case3pis2}, thanks to the two characteristic cut off functions and the above scaling argument,  it suffices to prove
\begin{equation}\label{est:q62}
\big\|\tilde{T}_{G_1}^j f\big\|_{L^6(\R^2)} \leq C2^{-\frac{5j}6 }\|f\|_{L^{2}(\R^2)}.
\end{equation}
To this end, we write the operator
\begin{equation*}
\begin{split}
\tilde{T}_{G_1}^j f(r_1,\theta_1) &=\int_0^\infty\int_0^{2\pi} \tilde{K}^j(r_1,r_2;\theta_1-\theta_2)\eta(\theta_1-\theta_2) f(r_2,\theta_2) \,d\theta_2 \,r_2 dr_2\\
&\triangleq\int_0^\infty[\tilde{T}_{G_1,r_1,r_2}^j f(r_2, \cdot)]( \theta_1) \,r_2 dr_2
\end{split}
\end{equation*}
where the operator
\begin{equation}\label{Tr1r2}
[\tilde{T}_{G_1,r_1,r_2}^j g(\cdot)]( \theta_1)=\int_0^{2\pi} \tilde{K}^j(r_1,r_2;\theta_1-\theta_2) \eta(\theta_1-\theta_2) g(\theta_2) d\theta_2.
\end{equation}
Then we can prove \eqref{est:q62} by showing
\begin{lemma}\label{lem:key1} Let $\tilde{T}_{G_1,r_1,r_2}^j$ be the operator defined in \eqref{Tr1r2}. Then
\begin{equation}\label{est:inf1}
\begin{split}
&\Big\|\big[\big(\tilde{T}_{G_1,r_1,r_2}^j\big)^* \tilde{T}_{G_1,r_1,r'_2}^j \big( F(r'_2, \cdot)\big)\big]( \theta_2) \Big\|_{L^\infty (d\theta_2)}
\\\lesssim& 2^{-j}
(2^j|r_2-r_2'|)^{-1/2}r_1^{-1} \|F\|_{L^{1}( d\theta'_2)},
\end{split}
\end{equation}
and
\begin{equation}\label{est:22}
\begin{split}
&\Big\|\big[\big(\tilde{T}_{G_1,r_1,r_2}^j\big)^* \tilde{T}_{G_1,r_1,r'_2}^j \big( F(r'_2, \cdot)\big)\big]( \theta_2) \Big\|_{L^2 (d\theta_2)}
\\\lesssim& 2^{-j}
(2^jr_1r_2)^{-1/2}(2^jr_1r'_2)^{-1/2} \|F\|_{L^{2}( d\theta'_2)}.
\end{split}
\end{equation}

\end{lemma}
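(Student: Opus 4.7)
The plan is to establish both estimates in Lemma~\ref{lem:key1} by analyzing the composed operator $(\tilde{T}_{G_1,r_1,r_2}^j)^{*}\tilde{T}_{G_1,r_1,r'_2}^j$ as an integral operator in $\theta$ and exploiting the oscillation of the phase $2^j|\hat x-\hat y|$ carried by $\tilde K^j_G$ via Van der Corput's Lemma~\ref{lem:VCL} with $k=2$. First, I would write the kernel of the composition explicitly as
\[
H(\theta_2,\theta'_2)=\int_0^{2\pi}\overline{\tilde K^j_G(r_1,r_2;\theta_1-\theta_2)}\,\tilde K^j_G(r_1,r'_2;\theta_1-\theta'_2)\,\eta(\theta_1-\theta_2)\eta(\theta_1-\theta'_2)\,d\theta_1.
\]
Writing $\hat x=r_1(\cos\theta_1,\sin\theta_1)$, $\hat y=r_2(\cos\theta_2,\sin\theta_2)$, $\hat y'=r'_2(\cos\theta'_2,\sin\theta'_2)$, this is a $\theta_1$-oscillatory integral with amplitude of size $2^{-j}$ (from the product of $(2^j|\hat x-\hat y|)^{-1/2}$ factors) and phase $\Phi(\theta_1)=2^j(|\hat x-\hat y'|-|\hat x-\hat y|)$. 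On the joint support of $\beta\beta'\eta\eta$, both $\theta_1-\theta_2$ and $\theta_1-\theta'_2$ are confined to a neighborhood of $\pi$, and $|\hat x-\hat y|,|\hat x-\hat y'|\in[3/4,8/3]$ forces $r_1+r_2, r_1+r'_2\sim 1$.

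For \eqref{est:inf1}, I would Taylor expand $|\hat x-\hat y|$ around $\theta_1-\theta_2=\pi$: setting $u=\theta_1-\theta_2-\pi$, a direct computation with $\partial_{\theta_1}|\hat x-\hat y|=r_1r_2\sin(\theta_1-\theta_2)/|\hat x-\hat y|$ yields $\partial_{\theta_1}^2|\hat x-\hat y|=-r_1r_2/(r_1+r_2)+O(u^2)$, and subtracting the analogous expansion for $|\hat x-\hat y'|$ gives the key lower bound
\[
|\Phi''(\theta_1)|\gtrsim 2^j\,\frac{r_1^2|r_2-r'_2|}{(r_1+r_2)(r_1+r'_2)}\gtrsim 2^j r_1^2|r_2-r'_2|
\]
uniformly on the support. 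Lemma~\ref{lem:VCL} with $k=2$ then produces the oscillatory bound $|\int e^{i\Phi}\chi\,d\theta_1|\lesssim (2^j r_1^2|r_2-r'_2|)^{-1/2}$, and multiplying by the amplitude size $2^{-j}$ yields $|H(\theta_2,\theta'_2)|\lesssim 2^{-j}(2^j|r_2-r'_2|)^{-1/2}r_1^{-1}$, which is \eqref{est:inf1}.

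For \eqref{est:22}, note that $\tilde T^j_{G_1,r_1,r}$ is a convolution operator in $\theta$ on $S^1$ (its kernel depends only on $\theta_1-\theta_2$), so by Plancherel
\[
\|(\tilde T^j_{G_1,r_1,r_2})^{*}\tilde T^j_{G_1,r_1,r'_2}\|_{L^2\to L^2}=\sup_{k\in\Z}\bigl|\hat K_{r_2}(k)\hat K_{r'_2}(k)\bigr|,\quad \hat K_r(k)=\int \tilde K^j_G(r_1,r;s)\eta(s)e^{-iks}\,ds.
\]
Each $\hat K_r(k)$ is a scalar oscillatory integral in $s$ with phase $2^j|\hat x-\hat y|-ks$, whose second derivative satisfies $|\partial_s^2(2^j|\hat x-\hat y|-ks)|=2^j|\partial_s^2|\hat x-\hat y||\gtrsim 2^jr_1r/(r_1+r)\sim 2^jr_1r$ near $s=\pi$. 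A second application of Van der Corput with $k=2$, combined with the amplitude size $2^{-j/2}$, gives $|\hat K_r(k)|\lesssim 2^{-j}(r_1r)^{-1/2}$ uniformly in $k$; taking the product over $r\in\{r_2,r'_2\}$ yields $\|H\|_{L^2\to L^2}\lesssim 2^{-2j}(r_1r_2)^{-1/2}(r_1r'_2)^{-1/2}=2^{-j}(2^jr_1r_2)^{-1/2}(2^jr_1r'_2)^{-1/2}$, which is \eqref{est:22}.

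The main obstacle is the lower bound on $\Phi''$: one has to carefully expand both $|\hat x-\hat y|$ and $|\hat x-\hat y'|$ in the small angular deviations from $\pi$ and isolate the $r_2-r'_2$ contribution from higher-order terms, ensuring that corrections cannot absorb the leading $r_1^2(r_2-r'_2)$ piece on the relevant support. A secondary technical point is verifying that the smooth amplitude (the product of the $a$-factor with the $\beta$- and $\eta$-cutoffs) has bounded variation in $\theta_1$ (resp.\ in $s$) uniformly in $r_1,r_2,r'_2,\theta_2,\theta'_2$, which follows from the derivative bound \eqref{est:a} together with the chain rule, so that Van der Corput can be applied cleanly in both steps.
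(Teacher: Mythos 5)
Your treatment of \eqref{est:22} is essentially the paper's: both reduce via Plancherel to a Fourier-multiplier bound for a single convolution kernel and apply Van der Corput to the $s$-integral, using the lower bounds on $|\partial_s d_G|$ and $|\partial_s^2 d_G|$ near the relevant angle.

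For \eqref{est:inf1}, however, the argument has a genuine gap. You claim a uniform lower bound
\[
|\Phi''(\theta_1)|\gtrsim 2^j r_1^2|r_2-r'_2|
\]
on the support, but this is false in general. Expanding as the paper does, one finds
\[
\partial^2_{\theta_1}\Phi=\frac{r_1^2(r_2-r'_2)}{(r_1+r_2)(r_1+r'_2)}\cos(\theta_1-\theta_2)
+\frac{r_1r'_2}{r_1+r'_2}\bigl(\cos(\theta_1-\theta_2)-\cos(\theta_1-\theta'_2)\bigr)+\text{Error}.
\]
The second term is of size $O\bigl(\epsilon\, r_1 r'_2\,|\theta_2-\theta'_2|\bigr)$ and has no fixed sign relative to the first; since $|\theta_2-\theta'_2|$ is an independent variable bounded only by $O(\epsilon)$ while $|r_2-r'_2|$ may be arbitrarily small, the second term can be comparable to or larger than the first and can cancel it. Your closing remark that one must ensure the corrections "cannot absorb the leading $r_1^2(r_2-r'_2)$ piece" names the obstacle but does not overcome it --- they \emph{can} absorb it. The paper resolves this by a dichotomy: when $r_1 r'_2|\theta_2-\theta'_2|\ll \epsilon^{-1}r_1^2|r_2-r'_2|$ the first term dominates and one applies Van der Corput with $k=2$; in the complementary regime the \emph{first} derivative $\partial_{\theta_1}\Phi$ is bounded below (driven by the $\sin(\theta_1-\theta_2)-\sin(\theta_1-\theta'_2)\approx -\cos(\pi)(\theta'_2-\theta_2)$ piece), and one uses a non-stationary-phase / $k=1$ argument instead. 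Without this case split the kernel bound is not established.

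A secondary omission: the kernel $\tilde K^j_G$ carries the jump factor $\mathbbm 1_{[0,\pi]}(|\theta_1-\theta_2|)$, and the paper spends the first part of the proof showing that on the support of $\eta$ this reduces to restricting the $\theta_1$-integration to $\theta_1\le\min\{\theta_2+\pi,\theta'_2+\pi\}$ (so the discontinuity becomes a limit of integration rather than a rough amplitude). Your write-up never mentions this factor, yet an uncontrolled discontinuity inside the amplitude would spoil the smooth-amplitude hypothesis you invoke for Van der Corput unless one argues, as the paper does, that it can be converted into boundary data.
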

We assume Lemma \ref{lem:key1} for a while and prove \eqref{est:q62} now.  By interpolation with \eqref{est:inf1} and \eqref{est:22}, we obtain
\begin{equation}\label{equ:derivestan}
\begin{split}
&\Big\|\big[\big(\tilde{T}_{G_1,r_1,r_2}^j\big)^* \tilde{T}_{G_1,r_1,r'_2}^j \big( F(r'_2, \cdot)\big)\big]( \theta_2) \Big\|_{L^6(d\theta_2)}
\\\lesssim& 2^{-\frac{5j}3}
r_1^{-1}|r_2-r_2'|^{-\frac13}(r_2r_2')^{-\frac16} \|F\|_{L^{\frac65}(d\theta'_2)}.
\end{split}
\end{equation}

Since $\theta_1-\theta_2\in [\pi-3\epsilon,\pi+\epsilon]$, we get
$$|x-y|=\sqrt{r_1^2+r_2^2-2r_1r_2\cos(\theta_1-\theta_2)}\sim r_1+r_2.$$
This together with the support of $\beta$ in the kernel yields that
\begin{equation}\label{r-sim-1}
r_1+r_2\sim |x-y|\sim 1.
\end{equation}
Similarly, we have  $r_1+r_2'\sim 1$.
By \eqref{equ:derivestan}, the Hardy-Littlewood-Sobolev inequality and the fact that $r_2, r_2'\lesssim 1$, we prove
\begin{align}\nonumber
   &\Big\|\int_0^\infty\big[\big(\tilde{T}_{G_1,r_1,r_2}^j\big)^* \tilde{T}_{G_1,r_1,r'_2}^j \big( F(r'_2, \cdot)\big)\big]( \theta_2)  \,r'_2 dr'_2 \Big\|_{L^6(r_2dr_2 d\theta_2)}
\\\nonumber\lesssim& \Big\|\int_0^\infty\big\|\big[\big(\tilde{T}_{G_1,r_1,r_2}^j\big)^* \tilde{T}_{G_1,r_1,r'_2}^j \big( F(r'_2, \cdot)\big)\big]( \theta_2)\big\|_{L^6(d\theta_2)}  \,r'_2 dr'_2 \Big\|_{L^6(r_2dr_2)}\\\nonumber
\lesssim&2^{-\frac{5j}3}
r_1^{-1}\Big\|\int_0^\infty |r_2-r_2'|^{-\frac13}(r_2r_2')^{-\frac16} \|F\|_{L^{\frac65}(d\theta'_2)}\,r'_2 dr'_2 \Big\|_{L^6(r_2dr_2)}\\\label{est:65}
\lesssim& 2^{-\frac{5j}3}
r_1^{-1} \|F\|_{L^{\frac65}(r'_2dr'_2 d\theta'_2)}.
\end{align}

From \eqref{Tr1r2}, we note that
\begin{equation*}
\begin{split}
\Big(\big(\tilde{T}_{G_1}^j\big)^*\,\tilde{T}_{G_1}^j F\Big)(r_2,\theta_2)=\int_0^\infty\int_0^\infty\big[\big(\tilde{T}_{G_1,r_1,r_2}^j\big)^* \tilde{T}_{G_1,r_1,r'_2}^j \big( F(r'_2, \cdot)\big)\big]( \theta_2)  \,r'_2 dr'_2 \,r_1 dr_1,
\end{split}
\end{equation*}
therefore by \eqref{est:65} and the fact $r_1\lesssim 1$, we obtain
\begin{equation}
\begin{split}
&\Big\|\Big(\big(\tilde{T}_{G_1}^j\big)^*\,\tilde{T}_{G_1}^j F\Big)(r_2,\theta_2)\Big\|_{L^6(\R^2)} \lesssim 2^{-\frac{5j}3}\|F(r_2',\theta_2')\|_{L^{\frac{6}5}(\R^2)},
\end{split}
\end{equation}
which gives \eqref{est:q62} by $TT^\ast$ argument. We have completed the proof of \eqref{equ:case3pis2} once we prove Lemma \ref{lem:key1}.

\end{proof}

\begin{proof}[{\bf The proof of Lemma \ref{lem:key1}:}] The main idea is to remove the jump function $\mathbbm{1}_{[0,\pi]}(|\theta_1-\theta_2|)$.
From \eqref{Tr1r2}, the kernel of $\big(\tilde{T}_{G_1,r_1,r_2}^j\big)^* \tilde{T}_{G_1,r_1,r'_2}^j $ is
\begin{equation}
\begin{split}
 &\tilde{K}^j_G(r_1,r_2, r_2';\theta_2,\theta_2')\\
 =&\int_0^{2\pi} \tilde{K}^j_G(r_1,r_2;\theta_1-\theta_2) \overline{\tilde{K}^j_G(r_1,r'_2;\theta_1-\theta'_2)} \eta(\theta_1-\theta_2) \eta(\theta_1-\theta'_2) \,d\theta_1.
\end{split}
\end{equation}
Recall $x=r_1(\cos\theta_1,\sin\theta_1)$, $y=r_2(\cos\theta_2,\sin\theta_2)$ and let $z=r'_2(\cos\theta'_2,\sin\theta'_2)$,
then
\begin{align}\label{equ:kjkernj}
 &\tilde{K}^j_G(r_1,r_2, r_2';\theta_2,\theta_2')
\\\nonumber
=&2^{-j}\int_{0}^{2\pi} e^{i2^j(|x-y|-|x-z|)}\beta(|x-y|) |x-y|^{-1/2} \beta(|x-z|) |x-z|^{-1/2}\\\nonumber
&\qquad a(2^j|x-y|)a(2^j|x-z|)\mathbbm{1}_{[0,\pi]}(|\theta_1-\theta_2|)
\mathbbm{1}_{[0,\pi]}(|\theta_1-\theta_2'|)\eta(\theta_1-\theta_2)  \eta(\theta_1-\theta_2') d\theta_1.
\end{align}
Note $\theta_1-\theta_2\in [\pi-2\epsilon,\pi+2\epsilon]$, thus $\theta_1-\theta_2>0$. Thus $$\mathbbm{1}_{[0,\pi]}(|\theta_1-\theta_2|), \mathbbm{1}_{[0,\pi]}(|\theta_1-\theta_2'|)\implies \theta_1\leq \min\{\theta_2+\pi, \theta_2'+\pi\}.$$
Let $a=\pi-2\epsilon$ and $ b=\min\{\theta_2+\pi, \theta_2'+\pi\}$,
therefore we can drop off the characteristic jump function to obtain
\begin{equation}\label{Kj}
\begin{split}
 &\tilde{K}^j_G(r_1,r_2, r_2';\theta_2,\theta_2')
\\=&2^{-j}\int_{a}^{b} e^{i2^j(|x-y|-|x-z|)}\beta(|x-y|) |x-y|^{-1/2} \beta(|x-z|) |x-z|^{-1/2}\\
&\qquad \qquad a(2^j|x-y|)a(2^j|x-z|)\eta(\theta_1-\theta_2)  \eta(\theta_1-\theta_2') d\theta_1.
\end{split}
\end{equation}
{\bf Proof of \eqref{est:inf1}:} \eqref{est:inf1} easily follows from the following  claim
\begin{equation}\label{equ:tildkest1}
  |\tilde{K}^j_G(r_1,r_2, r_2';\theta_2,\theta_2')|\lesssim 2^{-j} (2^j|r_2-r_2'|)^{-\frac12}r_1^{-1}.
\end{equation}
To show the above claim, we denote
 $$d_G(r_1,r_2; \theta_1-\theta_2)=|x-y|=\sqrt{r_1^2+r_2^2-2r_1r_2\cos(\theta_1-\theta_2)},$$ and
\begin{equation}
\begin{split}
\Phi(r_1,r_2, r_2';\theta_1,\theta_2,\theta_2')
=d_G(r_1,r_2; \theta_1-\theta_2)-d_G(r_1,r'_2; \theta_1-\theta'_2).
\end{split}
\end{equation}
Due to the compact support of $\eta$,  we note that
$\theta_1-\theta_2\to \pi$ as $\epsilon\to 0$. Hence the facts $|x-y|, |x-z|\sim 1$ again implies $r_1+r_2, r_1+r_2'\sim 1$.
Then we compute that
\begin{equation}\label{1-d}
\begin{split}
\partial_{\theta_1}d_G(r_1,r_2; \theta_1-\theta_2)&=\frac{r_1r_2\sin(\theta_1-\theta_2)}{\sqrt{r_1^2+r_2^2-2r_1r_2\cos(\theta_1-\theta_2)}}\\
&=\frac{r_1r_2}{r_1+r_2}\sin(\theta_1-\theta_2)+O((r_1r_2)^2(\theta_1-\theta_2-\pi)^3),
\end{split}
\end{equation}
and
\begin{equation}\label{2-d}
\begin{split}
&\partial^2_{\theta_1}d_G(r_1,r_2; \theta_1-\theta_2)=\frac{r_1r_2}{r_1+r_2}\cos(\theta_1-\theta_2)+O((r_1r_2)^2(\theta_1-\theta_2-\pi)^2).\end{split}
\end{equation}
Therefore we obtain that
\begin{equation*}
\begin{split}
&\partial_{\theta_1}\Phi(r_1,r_2, r_2';\theta_1,\theta_2,\theta_2')=\frac{r_1r_2}{r_1+r_2}\sin(\theta_1-\theta_2)-\frac{r_1r_2'}{r_1+r_2'}\sin(\theta_1-\theta_2')+Error\\
&=\frac{r_1^2(r_2-r_2')}{(r_1+r_2)(r_1+r_2')}\sin(\theta_1-\theta_2)+\frac{r_1r'_2}{r_1+r'_2}\big(\sin(\theta_1-\theta_2)
-\sin(\theta_1-\theta'_2)\big)+Error,
\end{split}
\end{equation*}
and
\begin{equation*}
\begin{split}
&\partial^2_{\theta_1}\Phi(r_1,r_2, r_2';\theta_1,\theta_2,\theta_2')=\frac{r_1r_2}{r_1+r_2}\cos(\theta_1-\theta_2)-\frac{r_1r'_2}{r_1+r'_2}\cos(\theta_1-\theta'_2)+Error\\
&=\frac{r_1^2(r_2-r_2')}{(r_1+r_2)(r_1+r_2')}\cos(\theta_1-\theta_2)+\frac{r_1r'_2}{r_1+r'_2}\big(\cos(\theta_1-\theta_2)
-\cos(\theta_1-\theta'_2)\big)+Error.
\end{split}
\end{equation*}
where the error terms are $O(\epsilon^2)$ as $\epsilon\to 0$.

{\bf Case 1: $r_1r_2' |\theta_2-\theta_2'|\ll \epsilon^{-1}r_1^2|r_2-r_2'|$.} In this case, we have
\begin{equation*}
\Big|\frac{r_1r'_2}{r_1+r'_2}\big(\cos(\theta_1-\theta_2)-\cos(\theta_1-\theta'_2)\big)\Big|\lesssim r_1r_2'\epsilon |\theta_2-\theta_2'|\ll r_1^2|r_2-r_2'|.
\end{equation*}
Combining this inequality with the facts that $r_1+r_2, r_1+r_2'\sim 1$ and $\theta_1-\theta_2\to \pi$ as $\epsilon\to 0$, we obtain
 $$|\partial^2_{\theta_1}\Phi|\geq c |r_2-r_2'| r_1^2.$$
 Thus, the claim \eqref{equ:tildkest1} follows by the Van-der-Corput Lemma \ref{lem:VCL} in this case.

{\bf Case 2: $r_1r_2' |\theta_2-\theta_2'|\geq C \epsilon^{-1}r_1^2|r_2-r_2'|$.} In this case, one has
\begin{equation}
\begin{split}
\Big|\frac{r_1^2(r_2-r_2')}{(r_1+r_2)(r_1+r_2')}\sin(\theta_1-\theta_2)\Big|\lesssim& \epsilon r_1^2|r_2-r_2'|,\\
\Big|\frac{r_1r'_2}{r_1+r'_2}\big(\sin(\theta_1-\theta_2)-\sin(\theta_1-\theta'_2)\big)\Big|\geq& cr_1r_2'|\theta_2-\theta_2'|\geq \epsilon^{-1}r_1^2|r_2-r_2'|.
\end{split}
\end{equation}
Thus, we obtain $$|\partial_{\theta_1}\Phi|\geq c |r_2-r_2'| r_1^2$$
 when $\epsilon$ small enough.
Therefore, we prove  \eqref{equ:tildkest1} by Lemma \ref{lem:absdec}.

{\bf Proof of \eqref{est:22}:}  From $TT^\ast$-argument, we know that \eqref{est:22} is equivalent to
\begin{equation}\label{est:22'}
\Big\| \tilde{T}_{G_1,r_1,r_2}^j f \Big\|_{L^2 (d\theta_1)}\lesssim 2^{-\frac{j}2}
(2^jr_1r_2)^{-1/2} \|f\|_{L^{2}( d\theta_2)}.
\end{equation}
The assumption on the support of $\tilde{K}^j_G$ in $\theta_1-\theta_2$ means that we may treat the $\theta_j$ as variables in $\R$. Since $\tilde{T}_{G_1,r_1,r_2}^j$ is a convolution kernel in $\theta$, it suffices to prove that the corresponding Fourier multiplier satisfies
\begin{align}\nonumber
  &\Big|\int_{\pi-2\epsilon}^{\pi} e^{i\zeta\theta+i2^jd_G(r_1,r_2; \theta)}
  \beta\big(d_G(r_1,r_2; \theta)\big) \big(2^\frac{j}2d_G(r_1,r_2; \theta)\big)^{-\frac12} a\big(2^jd_G(r_1,r_2; \theta)\big)
\eta(\theta)\;d\theta\Big|\\\label{equ:kernconfrou}
\lesssim& 2^{-\frac{j}2}
(2^jr_1r_2)^{-1/2}.
\end{align}
Indeed, from \eqref{1-d} and \eqref{2-d}, we see
$$|\partial_{\theta_1}d_G(r_1,r_2; \theta_1-\theta_2)|+|\partial_{\theta_1}^2d_G(r_1,r_2; \theta_1-\theta_2)|\geq c r_1r_2.$$
Hence, we use Lemma \ref{lem:absdec} and the Van-der-Corput lemma \ref{lem:VCL} again to obtain \eqref{equ:kernconfrou}.
Thus, we prove Lemma \ref{lem:key1}.
\end{proof}

\subsection{The estimation of $T_{D_1}$.}  Now, we consider the operator $T_{D_1}$.
Recall
 \begin{equation}
\begin{split}
D_1(r_1,\theta_1;r_2,\theta_2)&= \frac1{\pi}\int_0^\infty e^{i|{\bf n}|} |{\bf n}|^{-1/2} a(|{\bf n}|)\, B_\alpha(s,\theta_1,\theta_2)\;ds
\end{split}
\end{equation}
and
\begin{equation*}
\begin{split}
&B_{\alpha}(s,\theta_1,\theta_2)= -\frac{1}{4\pi^2}e^{-i\alpha(\theta_1-\theta_2)+i\int_{\theta_2}^{\theta_{1}} \alpha(\theta') d\theta'}  \Big(\sin(|\alpha|\pi)e^{-|\alpha|s}\\
&\qquad +\sin(\alpha\pi)\frac{(e^{-s}-\cos(\theta_1-\theta_2+\pi))\sinh(\alpha s)-i\sin(\theta_1-\theta_2+\pi)\cosh(\alpha s)}{\cosh(s)-\cos(\theta_1-\theta_2+\pi)}\Big).
 \end{split}
\end{equation*}
Note that $D_2(r_1,\theta_1,r_2,\theta_2)=0$ when $\alpha=0$. We always assume $\alpha\neq 0$.

It suffices to prove the three estimates
\begin{equation}\label{equ:kdellred}
\Big\|\int_0^\infty \int_0^{2\pi } K^{\ell}_D(r_1,r_2;\theta_1-\theta_2) f(r_2,\theta_2) r_2 dr_2 d\theta_2\Big\|_{L^q(\R^2)}\leq C\|f\|_{L^p(\R^2)},
\end{equation}
for $\ell=1,2,3$, where
\begin{align}\label{kerD12}
K^1_D(r_1,r_2;\theta_1-\theta_2)&=\int_0^\infty e^{i|{\bf n}|} |{\bf n}|^{-1/2} a(|{\bf n}|)\, e^{-|\alpha|s}\;ds,\\\nonumber
K^2_D(r_1,r_2;\theta_1-\theta_2)&=\int_0^\infty e^{i|{\bf n}|} |{\bf n}|^{-1/2} a(|{\bf n}|)\, \frac{(e^{-s}-\cos(\theta_1-\theta_2+\pi))\sinh(\alpha s)}{\cosh(s)-\cos(\theta_1-\theta_2+\pi)}\;ds,
\end{align}
and
\begin{equation}\label{kerD3}
K^3_D(r_1,r_2;\theta_1-\theta_2)=\int_0^\infty e^{i|{\bf n}|} |{\bf n}|^{-1/2} a(|{\bf n}|)\, \frac{\sin(\theta_1-\theta_2+\pi)\cosh(\alpha s)}{\cosh(s)-\cos(\theta_1-\theta_2+\pi)}\;ds.
\end{equation}
Using the partition of one again
$$\beta_0(r)=1-\sum_{j\geq1}\beta(2^{-j}r),\quad \beta\in\mathcal{C}_c^\infty\big(\big[\tfrac34,\tfrac83\big]\big),$$ we decompose
\begin{equation}\label{KDlj}
K^{\ell}_D(r_1,r_2;\theta_1-\theta_2)
=:\sum_{j\geq 0}K^{\ell,j}_D(r_1,r_2;\theta_1-\theta_2),
\end{equation}
where for $j\geq1$ and $\ell=1,2,3$
$$K_D^{\ell,j}(r_1,r_2;\theta_1-\theta_2)=\beta(2^{-j}(r_1+r_2))K^\ell_D(r_1,r_2;\theta_1-\theta_2)$$
and $K_D^{\ell,0}(r_1,r_2;\theta_1-\theta_2)=\beta_0(r_1+r_2)K^\ell_D(r_1,r_2;\theta_1-\theta_2)$.
Then we have
\begin{lemma}\label{lem:TDj} For $\ell=1,2,3$, let $T_{D_1}^{\ell,j}$ be the operator associated with kernel $K_{D}^{\ell,j}(r_1,r_2;\theta_1-\theta_2)$ given in \eqref{KDlj} and let $6\leq 3p'\leq q\leq \infty$.
Then
 \begin{equation}\label{est:TDj}
 \|T_{D_1}^{\ell,j}\|_{L^p\to L^q}\leq C 2^{-2j(\frac{1}{p}-\frac34)},\quad \forall j\geq0.
\end{equation}
\end{lemma}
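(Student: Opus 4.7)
The plan is to run the same endpoint-interpolation scheme as in Lemma~\ref{lem:TGj}, with two simplifications and one new twist for the diffractive kernels $K_D^{\ell,j}$. The first simplification is that the phase $e^{i|{\bf n}|}$ appearing in all three of the kernels depends only on $(r_1,r_2,s)$ and is independent of the angular variables, so the only oscillation to exploit is radial. The second simplification is that there is no jump function $\mathbbm{1}_{[0,\pi]}(|\theta_1-\theta_2|)$ in $K_D^{\ell,j}$, so the delicate analysis near $\theta_1-\theta_2=\pi$ that occupies Case~3 and Lemma~\ref{lem:key1} is entirely avoided. The new twist is the auxiliary integration over $s\in[0,\infty)$, which I will handle using the uniform integrability bounds \eqref{equ:ream1}--\eqref{equ:ream3} and, for refined estimates, the stationary-phase structure of $|{\bf n}|$ at $s=0$.

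For the case $j=0$ I would argue exactly as in Step~1 of Lemma~\ref{lem:TGj}: the cutoff $\beta_0(r_1+r_2)$ localizes to a bounded region, \eqref{equ:ream1}--\eqref{equ:ream3} show the kernel lies in $L^r$ for any $1\le r<\infty$, and Young's inequality gives the result for any admissible $(p,q)$. For $j\ge 1$, the support condition $r_1+r_2\sim 2^j$ combined with $|{\bf n}|\ge r_1+r_2$ and the boundedness of $a$ yields the size bound $|K_D^{\ell,j}|\lesssim 2^{-j/2}$, which immediately gives $\|T_{D_1}^{\ell,j}\|_{L^1\to L^\infty}\lesssim 2^{-j/2}$, matching the claim at the corner $(1/p,1/q)=(1,0)$. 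Combining this sup bound with the fact that the $y$-support has area $\lesssim 2^{2j}$ and applying Cauchy--Schwarz in $y$ produces the $L^2\to L^\infty$ endpoint estimate $\|T_{D_1}^{\ell,j}\|_{L^2\to L^\infty}\lesssim 2^{j/2}$, matching the corner $(1/p,1/q)=(1/2,0)$. These two endpoints already fill the horizontal edge $1/q=0$ of the target region by interpolation.

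The remaining endpoint, and the heart of the argument, is $\|T_{D_1}^{\ell,j}\|_{L^2\to L^6}\lesssim 2^{j/2}$, which together with the previous two yields the full triangular range $6\le 3p'\le q\le \infty$ by Riesz--Thorin. I would prove it by a $TT^*$ computation in the spirit of Lemma~\ref{lem:key1}: after rescaling $r_i \mapsto 2^j r_i$ the phase becomes $e^{i 2^j|{\bf n}|}$, and the key oscillatory gain is extracted by Van der Corput in $s$ using
\[
\partial_s|{\bf n}| = \frac{r_1r_2\sinh s}{|{\bf n}|},
\qquad
\partial_s^2|{\bf n}|\big|_{s=0} = \frac{r_1r_2}{r_1+r_2}\gtrsim 1,
\]
while the $\theta$-convolution against $w_\ell(s,\cdot)$ is absorbed by the uniform $L^1_s$ bounds \eqref{equ:ream1}--\eqref{equ:ream3}. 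The main obstacle, as I see it, is the simultaneous control of the stationary point of the radial phase at $s=0$ and the potential singularity of the angular denominators $\cosh s - \cos(\theta_1-\theta_2+\pi)$ as $s\to 0$ and $\theta_1-\theta_2\to \pi$, which appears in $K^2_D$ and $K^3_D$. A dyadic decomposition in $s$, treating small $s$ by stationary phase and large $s$ by the decay of $|{\bf n}|^{-1/2}$ combined with the exponential decay built into $B_\alpha$, should yield the clean $2^{j/2}$ bound and complete the proof of \eqref{est:TDj}.
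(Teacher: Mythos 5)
You have the right structural picture — there is no angular jump function, the phase $e^{i|{\bf n}|}$ depends only on $(r_1,r_2,s)$ so the gain must come from Van der Corput in $s$, and your three endpoints $(p,q)=(1,\infty),(2,\infty),(2,6)$ do span the target triangle by interpolation, with the claimed exponents matching $2^{-2j(1/p-3/4)}$. Where you diverge from the paper is in how that gain is packaged, and one piece of your route does not go through as stated. The $TT^*$ computation ``in the spirit of Lemma \ref{lem:key1}'' would not produce anything useful here: in that lemma the gain $(2^j|r_2-r_2'|)^{-1/2}$ comes from the $\theta_1$-oscillation of $e^{i2^j(|x-y|-|x-z|)}$, and since the diffractive phase $e^{i|{\bf n}|}$ is $\theta_1$-independent there is no cancellation to extract when the $TT^*$ kernel is integrated in $\theta_1$. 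The paper takes a more elementary route that bypasses $TT^*$ entirely: it proves the pointwise bound $|K_D^{\ell,j}|\lesssim 2^{-j/2}(1+2^jr_1r_2)^{-1/2}\beta(2^{-j}(r_1+r_2))$ (Lemmas \ref{lem:ker-est12} and \ref{lem:ker-est3}) by Van der Corput in $s$, and then feeds this into the Minkowski/Young computation of Proposition \ref{lem:kerlplq}, which yields the rescaled bound \eqref{est:TDjequ} for the full range $q>4$, $q>p'$ in a single step — no interpolation between corners is even needed. Given that pointwise bound your endpoints follow too, so the $TT^*$ detour is superfluous rather than fatal, but it is not the source of the gain.

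The genuine gap is in your last paragraph. For $\ell=3$ the amplitude contains $\psi_3\sim \frac{b}{s^2/2+b^2}$ with $b=\sqrt2\sin\frac{\theta_1-\theta_2+\pi}{2}$, and $\int_0^1|\partial_s\psi_3|\,ds$ is \emph{not} bounded uniformly as $b\to 0$, so Van der Corput cannot be applied directly to the full integrand; a dyadic decomposition in $s$ ``treating small $s$ by stationary phase'' does not by itself repair this, because the total variation of the amplitude on the relevant dyadic blocks still blows up like $1/b$ and the block contributions must be summed. The paper's fix is to write $\psi_3=\psi_{3,m}+\psi_{3,e}$, show that $\psi_{3,e}$ has bounded variation uniformly in $b$ (Lemma \ref{lem:vercond}, verified term by term via Taylor expansions), and for the model piece $\psi_{3,m}$ apply a Morse-lemma change of variables that reduces the problem to the explicit model oscillatory integral $H$ of \eqref{def-H}, which is then bounded by quoting \cite[(35)]{BFM18}. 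A milder model comparison of the same kind is already required for $\ell=2$ (estimate \eqref{d-v1}). Without some version of this argument, the $\ell=2,3$ cases do not close as sketched.
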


With Lemma \ref{lem:TDj} in hand, we obtain \eqref{equ:kdellred} by the same argument as the proof of \eqref{equ:tg1red} from Lemma \ref{lem:TGj}. Now we prove Lemma \ref{lem:TDj}.

{\bf Step 1: $j=0$.} From \eqref{equ:ream1}, \eqref{equ:ream2} and \eqref{equ:ream3}, similarly arguing as $D_2(r_1,r_2,\theta_1,\theta_2)$, we have
$$|K_D^{\ell,0}(r_1,r_2;\theta_1-\theta_2)|\leq \beta_0(r_1+r_2)\in L^r(\R^2),\quad 1\leq r\leq\infty.$$
Then we conclude $$ \|T_{D_1}^{\ell,0}\|_{L^p\to L^q}\leq C.$$
\vspace{0.1cm}

{\bf Step 2: $j\geq1$.}  Using the scaling, for $j\geq1$ and $\ell=1,2,3$, we define that
$$\tilde{K}_D^{\ell,j}(r_1,r_2;\theta_1-\theta_2)=K_D^{\ell,0}(2^jr_1,2^jr_2;\theta_1-\theta_2)=\beta(r_1+r_2)
K^\ell_D(2^jr_1,2^jr_2;\theta_1-\theta_2)$$
and
\begin{equation}
\tilde{T}_{D_1}^{\ell,j} f(r_1,\theta_1)=\int_0^\infty\int_0^{2\pi} \tilde{K}_D^{\ell,j}(r_1,r_2;\theta_1-\theta_2) f(r_2,\theta_2) \,r_2dr_2\,d\theta_2.
\end{equation}
Then, we have
$$T_{D_1}^{\ell,j} f(r_1,\theta_1)=2^{2j}\tilde{T}_{D_1}^{\ell,j} \big(f(2^{j}r_2,\theta_2)\big)(2^{j}r_1,\theta_1).$$
Therefore, \eqref{est:TDj} is equivalent to
 \begin{equation}\label{est:TDjequ}
 \|\tilde{T}_{D_1}^{\ell,j}\|_{L^p\to L^q}\leq C 2^{-j(\frac{1}{2}+\frac2q)}.
\end{equation}

Now, we show a basic proposition.
\begin{proposition}\label{lem:kerlplq}
Let $T_K$ be defined by
$$T_Kf(r_1,\theta_1):=\int_0^\infty \int_{0}^{2\pi} K(r_1,r_2,\theta_1,\theta_2)f(r_2,\theta_2)\;d\theta_2\;r_2\;dr_2,$$
and the kernel $K(r_1,r_2,\theta_1,\theta_2)$ satisfies
\begin{equation}\label{equ:kercond}
  |K(r_1,r_2,\theta_1,\theta_2)|\lesssim 2^{-\frac j2}\big(1+2^jr_1r_2\big)^{-\frac12}\beta(r_1+r_2).
\end{equation}
Then, there holds
\begin{equation}\label{equ:tkpqest}
  \big\|T_Kf\big\|_{L^q(\R^2)}\lesssim 2^{-j(\frac{1}{2}+\frac2q)}\|f\|_{L^p(\R^2)}
\end{equation}
for $q>4$ and $q>p'$.
\end{proposition}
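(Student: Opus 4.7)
The plan is to exploit the angular independence of the bound \eqref{equ:kercond} on the kernel, which reduces the problem to a purely one-dimensional inequality in the radial variables. Setting $H(r_1, r_2) := 2^{-j/2}(1+2^j r_1 r_2)^{-1/2}\beta(r_1+r_2)$, the hypothesis yields
\begin{equation*}
|T_K f(r_1, \theta_1)| \leq \int_0^\infty H(r_1, r_2) \int_0^{2\pi} |f(r_2, \theta_2)|\,d\theta_2\, r_2\,dr_2,
\end{equation*}
which is independent of $\theta_1$. Taking the $L^q_{\theta_1}$-norm contributes only a constant, and Hölder's inequality in $\theta_2$ with $g(r_2):=\|f(r_2,\cdot)\|_{L^p(d\theta_2)}$ (so that $\|g\|_{L^p(r_2 dr_2)} = \|f\|_{L^p(\R^2)}$) reduces the goal to the one-dimensional bound
\begin{equation*}
\|Tg\|_{L^q(r_1 dr_1)}\lesssim 2^{-j(\frac12+\frac2q)}\|g\|_{L^p(r_2 dr_2)},\qquad Tg(r_1):=\int_0^\infty H(r_1,r_2)g(r_2)r_2\,dr_2.
\end{equation*}

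To prove this I would apply Minkowski's integral inequality followed by Hölder in $r_2$ with measure $r_2\,dr_2$, reducing matters to the single-kernel bound $\|F\|_{L^{p'}(r_2 dr_2)}\lesssim 2^{-j(\frac12+\frac2q)}$ with $F(r_2):=\|H(\cdot,r_2)\|_{L^q(r_1 dr_1)}$. Since $\beta(r_1+r_2)$ forces $r_1+r_2\sim 1$, both variables are $\lesssim 1$. Evaluating $F(r_2)$ requires splitting the $r_1$-integration at the threshold $2^j r_1 r_2=1$, where $H$ transitions from the flat regime $\sim 2^{-j/2}$ to the decaying regime $\sim 2^{-j}(r_1 r_2)^{-1/2}$. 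The key calculation is for $r_2\sim 1$: the divergence $\int_{2^{-j}}^1 r_1^{1-q/2}\,dr_1\sim 2^{j(q/2-2)}$ (valid precisely because $q>4$) combines with the prefactors to produce $F(r_2)\sim 2^{-j(\frac12+\frac2q)}$, while for $r_2\ll 1$ the support forces $r_1\sim 1$, the $r_1$-integral is essentially trivial, and one obtains the pointwise bound $F(r_2)\lesssim \min(2^{-j/2},\,2^{-j}r_2^{-1/2})$.

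Substituting these regional bounds into $\int_0^\infty F(r_2)^{p'} r_2\,dr_2$ produces a main contribution from $r_2\sim 1$ matching the target exactly, together with two subordinate contributions from $[0,2^{-j}]$ and $[2^{-j},1]$. The first is $\sim 2^{-j(p'/2+2)}$ and is dominated by the target whenever $q\geq p'$; the intermediate piece splits into three subcases according to the sign of $1-p'/2$, yielding $2^{-j(p'/2+2)}$, $j\cdot 2^{-4j}$, or $O(2^{-jp'})$ for $p'>4$, $p'=4$, and $p'<4$ respectively, each in turn dominated by the target under the strict hypotheses $q>4$ and $q>p'$. I expect the main obstacle to be this final case analysis, and in particular the logarithmic loss at the borderline $p'=4$, which is absorbed precisely because the inequality $q>4$ is strict.
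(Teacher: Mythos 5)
Your proof is correct, and it reaches the same conclusion by a slightly different path. Both arguments are Schur-type tests: Minkowski's integral inequality pulls the $L^q$-norm inside, then H\"older's inequality against $f$ (or $g$) finishes. The difference lies in how the kernel norm is estimated. The paper drops the $\beta(r_1+r_2)$ localization in the $x$-variable and exploits the exact scaling identity
$$\big\|(1+2^j|x|\,|y|)^{-1/2}\big\|_{L^q_x(\R^2)}=(2^j|y|)^{-2/q}\,\big\|(1+|x|)^{-1/2}\big\|_{L^q_x(\R^2)},$$
which is a one-line computation; the constant on the right is finite if and only if $q>4$, and then the remaining integral $\int_{|y|\le 1}|y|^{-2/q}|f(y)|\,dy$ closes by H\"older provided $q>p'$. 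Your route instead keeps the $\beta$-support throughout, reduces first to a one-dimensional radial operator via the angular independence of the bound, and then computes $F(r_2)=\|H(\cdot,r_2)\|_{L^q(r_1\,dr_1)}$ directly by case analysis across the threshold $2^jr_1r_2=1$ and across $r_2\sim1$ versus $r_2\ll1$. This is more laborious (three subcases for the intermediate $r_2$-range, a borderline at $p'=4$), but it produces a \emph{sharper} pointwise bound for $F(r_2)$ when $r_2\ll 1$ (namely $F(r_2)\lesssim 2^{-j/2}(1+2^jr_2)^{-1/2}$, which decays faster than the paper's $(2^jr_2)^{-2/q}$ for $q>4$), at the cost of having to track where the two conditions $q>4$ and $q>p'$ actually bite. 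The paper's scaling trick buys brevity and avoids the borderline discussion; your computation makes explicit which piece of the kernel drives each of the two numerical restrictions.
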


\begin{proof}
  By \eqref{equ:kercond} and Minkowski's inequality, we obtain
  \begin{align*}
    \big\|T_Kf\big\|_{L^q(\R^2)}\lesssim & 2^{-\frac j2}\Big\|\int_0^\infty \int_{0}^{2\pi} \big(1+2^jr_1r_2\big)^{-\frac12}\beta(r_1+r_2) |f(r_2,\theta_2)|
    \;d\theta_2\;r_2\;dr_2\Big\|_{L^q(\R^2)}\\
    \lesssim& 2^{-\frac j2}\int_{|y|\leq 1}\big\|(1+2^j|x|\cdot|y|)^{-\frac12}\big\|_{L^q_x(\R^2)}|f(y)|\;dy\\
     \lesssim& 2^{-\frac j2}2^{-\frac2qj}\int_{|y|\leq 1}\big\|(1+|x|)^{-\frac12}\big\|_{L^q_x(\R^2)}|y|^{-\frac2q}|f(y)|\;dy\\
     \lesssim&2^{-j(\frac 2q+\frac12)}\|f\|_{L^p}\big\||y|^{-\frac2q}\big\|_{L^{p'}(|y|\leq 1)}
     \lesssim2^{-j(\frac 2q+\frac12)}\|f\|_{L^p},
  \end{align*}
  where we need the assumption $q>4$ and $q>p'$ to guarantee the convergence of $\big\|(1+|x|)^{-\frac12}\big\|_{L^q_x(\R^2)}$ and
  $\big\||y|^{-\frac2q}\big\|_{L^{p'}(|y|\leq 1)}.$

\end{proof}

Now, we show the kernels $\tilde{K}_D^{\ell,j}(r_1,r_2;\theta_1-\theta_2)$ satisfies \eqref{equ:kercond}. To do this, for fixed $r_1, r_2$, $\theta_1,\theta_2$ and $j\geq1$, we define
\begin{equation}
\phi(r_1,r_2; s)=|{\bf n}|=\sqrt{r_1^2+r_2^2+2r_1r_2\cosh s},
\end{equation}
and
\begin{equation}
\begin{split}
\psi_1(r_1,r_2,\theta_1,\theta_2; s)&=|{\bf n}|^{-1/2} a(2^j|{\bf n}|)\, e^{-|\alpha|s},\\
\psi_2(r_1,r_2,\theta_1,\theta_2; s)&=|{\bf n}|^{-1/2} a(2^j|{\bf n}|)\, \frac{(e^{-s}-\cos(\theta_1-\theta_2+\pi))\sinh(\alpha s)}{\cosh(s)-\cos(\theta_1-\theta_2+\pi)},\\
\psi_3(r_1,r_2,\theta_1,\theta_2; s)&=|{\bf n}|^{-1/2} a(2^j|{\bf n}|)\, \frac{\sin(\theta_1-\theta_2+\pi)\cosh(\alpha s)}{\cosh(s)-\cos(\theta_1-\theta_2+\pi)}.
\end{split}
\end{equation}
Then by the definition, we have
\begin{equation}\label{KDl}
\begin{split}
\tilde{K}_D^{\ell,j}(r_1,r_2;\theta_1-\theta_2)&=2^{-\frac j2}\beta(r_1+r_2) \int_0^\infty e^{i2^j\phi(s)} \psi_\ell(s)\;ds,\quad \ell=1,2,3.
\end{split}
\end{equation}

\begin{lemma}\label{lem:ker-est12} For  $j\geq1$, then there holds
\begin{equation}\label{kerD-est}
\begin{split}
|\tilde{K}_{D}^{1,j}(r_1,r_2;\theta_1-\theta_2)|+|\tilde{K}_{D}^{2,j}(r_1,r_2;\theta_1-\theta_2)|\lesssim 2^{-\frac j2}\big(1+2^jr_1r_2\big)^{-\frac12}.
\end{split}
\end{equation}
\end{lemma}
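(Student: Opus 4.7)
I would split the analysis by the size of $\mu:=2^jr_1r_2$. Both kernels take the form
$$\tilde K_D^{\ell,j}=2^{-j/2}\beta(r_1+r_2)\int_0^\infty e^{i2^j\phi(s)}\psi_\ell(s)\,ds,\qquad \phi(s)=|{\bf n}|=\sqrt{r_1^2+r_2^2+2r_1r_2\cosh s},$$
and since $\beta(r_1+r_2)\neq 0$ forces $r_1+r_2\sim 1$ (hence $r_1r_2\lesssim 1$, $|{\bf n}|\gtrsim 1$, $|{\bf n}|^{-1/2}\lesssim 1$, and $|a(2^j|{\bf n}|)|\lesssim 1$), the task reduces to proving $\bigl|\int_0^\infty e^{i2^j\phi(s)}\psi_\ell(s)\,ds\bigr|\lesssim (1+\mu)^{-1/2}$ for $\ell=1,2$.

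In the regime $\mu\lesssim 1$, I would just bound by absolute values: the remaining $s$-integrals $\int_0^\infty e^{-|\alpha|s}\,ds$ and $\int_0^\infty\bigl|(e^{-s}-\cos\theta)\sinh(\alpha s)/(\cosh s-\cos\theta)\bigr|\,ds$ are uniformly $O(1)$ by \eqref{equ:ream1} and \eqref{equ:ream2}, which already gives the bound $\lesssim 1\sim (1+\mu)^{-1/2}$.

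In the regime $\mu\gg 1$ I would exploit oscillation via Van der Corput. A direct computation gives $\phi'(s)=r_1r_2\sinh s/|{\bf n}|$ and
$$\phi''(s)=\frac{r_1r_2\cosh s\,(r_1^2+r_2^2)+(r_1r_2)^2(\cosh^2 s+1)}{|{\bf n}|^3}>0,$$
so $\phi'$ is strictly increasing with its only zero at $s=0$, where $\phi''(0)=r_1r_2/(r_1+r_2)\sim r_1r_2$. I would then fix a small $s_0>0$, independent of the other parameters, so that $\phi''(s)\gtrsim r_1r_2$ on $[0,s_0]$. Lemma \ref{lem:VCL} with $k=2$ applied on $[0,s_0]$ produces the bound $(2^jr_1r_2)^{-1/2}\bigl(|\psi_\ell(s_0)|+\int_0^{s_0}|\psi_\ell'|\,ds\bigr)$, while on $[s_0,\infty)$ the monotone bound $\phi'\geq\phi'(s_0)\gtrsim r_1r_2$ together with the $k=1$ case of Lemma \ref{lem:VCL} yields $(2^jr_1r_2)^{-1}\bigl(|\psi_\ell(\infty)|+\int_{s_0}^\infty|\psi_\ell'|\,ds\bigr)$. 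Provided the total variation of $\psi_\ell$ is $O(1)$ (see below), these combine to the required $(2^jr_1r_2)^{-1/2}$.

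The main obstacle is verifying $\int_0^\infty|\psi_\ell'(s)|\,ds=O(1)$ uniformly in the remaining parameters. For $\psi_1$ this is routine: the factor $e^{-|\alpha|s}$ controls the tail, and the symbol bounds $|a^{(k)}(r)|\lesssim r^{-k}$ from Lemma \ref{lem:res-k}, applied via the chain rule to $|{\bf n}|^{-1/2}a(2^j|{\bf n}|)$ with $\partial_s|{\bf n}|=\phi'(s)$, keep the non-angular derivatives integrable. The delicate case is $\psi_2$, whose angular factor $F(s)=(e^{-s}-\cos\theta)\sinh(\alpha s)/(\cosh s-\cos\theta)$ with $\theta=\theta_1-\theta_2+\pi$ approaches a resonance as $\cos\theta\to 1$. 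A Taylor expansion at $s=0$ shows $F(0)$ and $F'(0)$ stay uniformly bounded in $\theta$, and the asymptotic $F(s)=O(e^{(|\alpha|-1)s})$ for large $s$ (using $|\alpha|<1$) gives integrability at infinity of both $F$ and $F'$; this mirrors the arguments behind \eqref{equ:ream2}--\eqref{equ:ream3} from \cite{FZZ}.
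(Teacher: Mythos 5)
Your overall strategy matches the paper's: split by the size of $\mu = 2^j r_1 r_2$, use the trivial absolute-value bound when $\mu\lesssim 1$, and for $\mu\gg 1$ apply Van der Corput near $s=0$ (with $k=2$, using $\partial_s\phi(0)=0$ and $|\partial_s^2\phi|\gtrsim r_1r_2$) and away from $s=0$ (with $k=1$ and monotonicity of $\partial_s\phi$). The $\psi_1$ case is handled in the same way as in the paper.

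The gap is in your treatment of $\psi_2$. You assert that the total-variation bound $\int_0^1|\psi_2'(s)|\,ds\lesssim 1$ is ``routine'' and ``mirrors the arguments behind \eqref{equ:ream2}--\eqref{equ:ream3}'', but those estimates bound $\int_0^\infty|F(s)|\,ds$, not $\int_0^1|F'(s)|\,ds$, and the latter is genuinely delicate. Writing $c=\cos(\theta_1-\theta_2+\pi)$ and $\epsilon=1-c$, the quotient $F(s)=(e^{-s}-c)\sinh(\alpha s)/(\cosh s-c)$ develops a near-resonance as $\epsilon\to 0$: applying the quotient rule term by term, $\int_0^1 |N'(s)|/D(s)\,ds\gtrsim\int_0^1 \frac{s}{\epsilon+s^2}\,ds\sim\log(1/\epsilon)$, which is not uniformly bounded. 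Knowing that $F(0)$ and $F'(0)$ stay bounded and that $F$ decays at infinity does not close this; in fact $|F'(s)|\sim\epsilon^{-1/2}$ for $s\sim\sqrt\epsilon$, so a pointwise bound is impossible. The uniform $L^1$ control of $F'$ only survives because of a cancellation between $N'D$ and $ND'$ in the numerator, and your proposal does not demonstrate that cancellation.

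This is exactly what the paper circumvents by replacing $F$ near $s=0$ with the rational model $\frac{(-s+b^2)(\alpha s)}{s^2/2+b^2}$ where $b=\sqrt 2\sin\big(\tfrac{\theta_1-\theta_2+\pi}{2}\big)$: Lemma \ref{lem:vercond} in the appendix shows the difference has uniformly bounded $s$-derivative pointwise, and the model term is then decomposed into pieces each of which is either constant or has a bounded derivative contribution, after which Van der Corput applies. To repair your argument you would either need to carry out this model comparison, or verify directly, via Taylor expansion of the numerator $N'D-ND'$, that it is $O\big(\epsilon(\epsilon+s+s^2)\big)$ and that $\int_0^1\frac{\epsilon(\epsilon+s+s^2)}{(\epsilon+s^2)^2}\,ds\lesssim 1$ uniformly in $\epsilon$.
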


Using Lemma \ref{lem:ker-est12} and Proposition \ref{lem:kerlplq},  we obtain \eqref{est:TDjequ} with $\ell=1,2.$

\begin{proof}[{\bf The proof of Lemma \ref{lem:ker-est12}}]:
If $2^jr_1r_2\lesssim1$,  then \eqref{kerD-est} follows by
 using \eqref{equ:ream1}, \eqref{equ:ream2} and \eqref{equ:ream3}.
So from now on, we assume $2^jr_1r_2\geq 1$ in the proof.
We first compute that
\begin{equation}
\begin{split}
\partial_s \phi&=\frac{r_1r_2\sinh s}{(r_1^2+r_2^2+2r_1r_2\cosh s)^{1/2}}\\
\partial^2_s\phi&=\frac{r_1r_2\cosh s}{(r_1^2+r_2^2+2r_1r_2\cosh s)^{1/2}}-\frac{(r_1r_2\sinh s)^2}{(r_1^2+r_2^2+2r_1r_2\cosh s)^{3/2}},
\end{split}
\end{equation}
therefore we obtain, for $0\leq s\leq 1$
\begin{equation}
\partial_s \phi (0)=0,\quad |\partial^2_s \phi|\geq c \frac{r_1r_2}{r_1+r_2} \gtrsim r_1r_2,
\end{equation}
and for $s\geq 1$
\begin{equation}
\partial_s \phi\geq c \frac{r_1r_2}{r_1+r_2} \gtrsim r_1r_2.
\end{equation}
One can verify that $\partial_s\phi(s)$ is monotonic on the interval $[1,\infty)$ and the facts that
\begin{equation}\label{psi-b-1}
\int_0^\infty |\psi_1'(s)|ds+\int_1^\infty|\psi_2'(s)|ds\lesssim 1.
\end{equation} Indeed, if the derivative hits on $|{\bf n}|^{-1/2} a(2^j|{\bf n}|)$ which is bounded, we again use \eqref{equ:ream1}, \eqref{equ:ream2} and \eqref{equ:ream3} to obtain \eqref{psi-b-1}.
If the derivative hits on $e^{-|\alpha|s}$, it brings harmlessness.
By using Van der Corput Lemma \ref{lem:VCL}, thus we prove
\begin{equation}
\begin{split}
|\tilde{K}_{D}^{1,j}(r_1,r_2;\theta_1-\theta_2)|\lesssim 2^{-\frac j2}\big(2^jr_1r_2\big)^{-\frac12}.
\end{split}
\end{equation}
However, if the derivative hits on
\begin{equation*}
\begin{split}
 \frac{(e^{-s}-\cos(\theta_1-\theta_2+\pi))\sinh(\alpha s)}{\cosh(s)-\cos(\theta_1-\theta_2+\pi)},
\end{split}
\end{equation*}
it is harmless when $s\in[1,\infty)$ but it becomes more singular near at $s=0$
so that we can not verify
\begin{equation*}
\int_0^1|\psi_2'(s)|ds\lesssim 1.
\end{equation*}
Hence, to estimate $\tilde{K}_{D}^{2,j}$, we need more argument to bound
\begin{equation}\label{psi-2'}
\begin{split}
2^{-\frac j2}\beta(r_1+r_2) \int_0^1 e^{i2^j\phi(s)} \psi_2(s)\;ds.
\end{split}
\end{equation}
To this aim, when $s$ is close to $0$, we replace
\begin{equation*}
\begin{split}
 \frac{(e^{-s}-\cos(\theta_1-\theta_2+\pi))\sinh(\alpha s)}{\cosh(s)-\cos(\theta_1-\theta_2+\pi)}\sim \frac{(-s+b^2)(\alpha s)}{\frac{s^2}2+b^2}
\end{split}
\end{equation*}
where $b=\sqrt{2}\sin\big(\frac{\theta_1-\theta_2+\pi}2\big)$.
Then for $0\leq s\leq 1$,  uniformly in $b$, we have
\begin{equation}\label{d-v1}
\begin{split}
\Big|\partial_s^{k}\Big(\frac{(e^{-s}-\cos(\theta_1-\theta_2+\pi))\sinh(\alpha s)}{\cosh(s)-\cos(\theta_1-\theta_2+\pi)}-\frac{(-s+b^2)(\alpha s)}{\frac {s^2}2+b^2}\Big)\Big|\lesssim 1,\quad k=0,1
\end{split}
\end{equation}
which will be verified in Lemma \ref{lem:vercond} in the appendix section.

Therefore  the difference term is accepted by using Van der Corput Lemma \ref{lem:VCL} again. Hence, instead of \eqref{psi-2'}, we need to control
\begin{equation}\label{psi-2'1}
\begin{split}
2^{-\frac j2}\beta(r_1+r_2) \int_0^1 e^{i2^j\phi(s)}|{\bf n}|^{-1/2} a(2^j|{\bf n}|)\frac{(-s+b^2)(\alpha s)}{\frac{s^2}2+b^2}\;ds
\end{split}
\end{equation}
which is bounded by
\begin{equation}
\begin{split}
\lesssim 2^{-\frac j2}\beta(r_1+r_2)& \Big(\Big|\int_0^1 e^{i2^j\phi(s)} |{\bf n}|^{-1/2} a(2^j|{\bf n}|)\;ds\Big|\\
&+\Big|\int_0^1 e^{i2^j\phi(s)} |{\bf n}|^{-1/2} a(2^j|{\bf n}|)\, \frac{b^2}{\frac{s^2}2+b^2}\;ds\Big|\\
&+\Big|\int_0^1 e^{i2^j\phi(s)} |{\bf n}|^{-1/2} a(2^j|{\bf n}|)\, \frac{sb^2}{\frac{s^2}2+b^2}\;ds\Big|\Big).
\end{split}
\end{equation}
Now we can bound
\begin{equation}
\begin{split}
\beta(r_1+r_2)& \Big(\int_0^1 \Big|\partial_s\big(|{\bf n}|^{-1/2} a(2^j|{\bf n}|)\big)\Big|\;ds\\
&+\Big|\int_0^1 \partial_s\Big(|{\bf n}|^{-1/2} a(2^j|{\bf n}|)\, \frac{b^2}{\frac{s^2}2+b^2}\Big)\;ds\Big|\\
&+\Big|\int_0^1\partial_s\Big(|{\bf n}|^{-1/2} a(2^j|{\bf n}|)\, \frac{sb^2}{\frac{s^2}2+b^2}\Big)\;ds\Big|\Big)\lesssim 1.
\end{split}
\end{equation}
Then by using Van der Corput Lemma \ref{lem:VCL} again, we obtain
\begin{equation}
\begin{split}
\Big|2^{-\frac j2}\beta(r_1+r_2) \int_0^1 e^{i2^j\phi(s)}|{\bf n}|^{-1/2} a(2^j|{\bf n}|)\frac{(-s+b^2)(\alpha s)}{\frac{s^2}2+b^2}\;ds\Big|\lesssim 2^{-\frac j2} (2^jr_1r_2)^{-\frac12}.
\end{split}
\end{equation}
Therefore we prove Lemma \ref{lem:ker-est12}.
\end{proof}

Now we are left to consider $\tilde{K}_D^{3,j}$ which is more complicated.
To this end, we define
\begin{equation}\label{psi-m}
\begin{split}
\psi_{3,m}(r_1,r_2,\theta_1,\theta_2; s)&=(r_1+r_2)^{-1/2} a(2^j(r_1+r_2))\, \frac{\sin(\theta_1-\theta_2+\pi)}{\frac{s^2}2+2\sin^2\big(\tfrac{\theta_1-\theta_2+\pi}2\big)},
\end{split}
\end{equation}
and $\psi_{3,e}=\psi_{3}-\psi_{3,m}$.
We further define the kernels
\begin{equation}\label{KDe3}
\begin{split}
\tilde{K}_{D,m}^{3,j}(r_1,r_2;\theta_1-\theta_2)&=2^{-\frac j2}\beta(r_1+r_2) \int_0^\infty e^{i2^j\phi(s)} \psi_{3, m}(s)\;ds,\\
\tilde{K}_{D,e}^{3,j}(r_1,r_2;\theta_1-\theta_2)&=2^{-\frac j2}\beta(r_1+r_2) \int_0^\infty e^{i2^j\phi(s)} \psi_{3, e}(s)\;ds.
\end{split}
\end{equation}
Similarly as Lemma \ref{lem:ker-est12}, we have

\begin{lemma}\label{lem:ker-est3} Let $\tilde{K}_{D,e}^{3,j}$ be given in \eqref{KDe3} and let
\begin{equation}\label{def-H}
H(r_1,r_2;\theta_1-\theta_2)=2^{-\frac j2}\beta(r_1+r_2) (r_1+r_2)^{\frac12} \int_0^\infty e^{i2^jr_1r_2 s^2} \psi_{3, m}(s)\;ds,
\end{equation}
 then
\begin{equation}\label{kerDe3-est}
\begin{split}
|\tilde{K}_{D,e}^{3,j}(r_1,r_2;\theta_1-\theta_2)|\lesssim 2^{-\frac j2}\big(1+2^jr_1r_2\big)^{-\frac12},
\end{split}
\end{equation}
\begin{equation}\label{kerDm3-est}
\begin{split}
\big|e^{-i2^j(r_1+r_2)}\tilde{K}_{D,m}^{3,j}(r_1,r_2;\theta_1-\theta_2)-H(r_1,r_2;\theta_1-\theta_2)\big|\lesssim 2^{-\frac j2}\big(1+2^jr_1r_2\big)^{-\frac12},
\end{split}
\end{equation}
and
\begin{equation}\label{kerDH3-est}
\begin{split}
|H(r_1,r_2;\theta_1-\theta_2)|\lesssim 2^{-\frac j2}\big(1+2^jr_1r_2\big)^{-\frac12}.
\end{split}
\end{equation}

\end{lemma}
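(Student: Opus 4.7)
The plan is to treat the three estimates separately, all in the spirit of the Van der Corput argument already carried out in Lemma \ref{lem:ker-est12}. The decomposition $\psi_{3}=\psi_{3,m}+\psi_{3,e}$ is designed precisely so that the explicit model $\psi_{3,m}$ captures the singular $s\to 0$ behavior of the factor $\sin(\theta_{1}-\theta_{2}+\pi)\cosh(\alpha s)/(\cosh s-\cos(\theta_{1}-\theta_{2}+\pi))$ appearing in $\psi_{3}$ (which deteriorates as $\theta_{1}-\theta_{2}\to \pm\pi$). Once this singular piece is subtracted, the remainder $\psi_{3,e}$ is uniformly smooth in $s$ and amenable to Van der Corput, while the model piece $H$ can be analyzed through an explicit change of variables.

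For \eqref{kerDe3-est}, I would first verify, by a Taylor expansion argument analogous to \eqref{d-v1} and deferred to an appendix lemma, that $\psi_{3,e}$ and $\partial_{s}\psi_{3,e}$ are uniformly bounded in $(r_{1},r_{2},\theta_{1},\theta_{2})$ and satisfy $\int_{0}^{\infty}|\partial_{s}\psi_{3,e}|\,ds\lesssim 1$. Combined with the phase derivative bounds for $\phi(s)=|{\bf n}|$ already established in Lemma \ref{lem:ker-est12}, namely $|\partial_{s}^{2}\phi|\gtrsim r_{1}r_{2}$ on $[0,1]$ and $\partial_{s}\phi$ monotonic with $|\partial_{s}\phi|\gtrsim r_{1}r_{2}$ on $[1,\infty)$, the Van der Corput lemma provides the $(2^{j}r_{1}r_{2})^{-1/2}$ decay when $2^{j}r_{1}r_{2}\gtrsim 1$; the regime $2^{j}r_{1}r_{2}\lesssim 1$ is handled by the trivial bound together with \eqref{equ:ream1}--\eqref{equ:ream3}.

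For \eqref{kerDm3-est}, I would pull out the factor $e^{i2^{j}(r_{1}+r_{2})}$ from $\tilde{K}_{D,m}^{3,j}$ and Taylor-expand both the remaining phase $2^{j}(\phi(s)-(r_{1}+r_{2}))$ and the amplitude $|{\bf n}|^{-1/2}a(2^{j}|{\bf n}|)$ in powers of $s$ about $s=0$. The kernel $H$ corresponds exactly to keeping the leading quadratic term of the phase and the zeroth-order term of the amplitude, so each error term carries an additional factor of $s^{2}$ (or better), which precisely cancels the singular $(s^{2}/2+b^{2})^{-1}$ coming from $\psi_{3,m}$. The resulting integrand has uniformly bounded sup norm and uniformly bounded $L^{1}$-derivative in $s$; a Van der Corput application with the same phase bounds then delivers the claimed decay, with the trivial regime treated as before.

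For \eqref{kerDH3-est}, set $b=\sqrt{2}\sin((\theta_{1}-\theta_{2}+\pi)/2)$ and substitute $s=|b|\sigma$ to rewrite the $s$-integral as $|b|^{-1}\int_{0}^{\infty}(\sigma^{2}/2+1)^{-1}e^{i\mu b^{2}\sigma^{2}}\,d\sigma$ with $\mu=2^{j}r_{1}r_{2}$; the identity $\sin(\theta_{1}-\theta_{2}+\pi)=\sqrt{2}\,b\cos((\theta_{1}-\theta_{2}+\pi)/2)$ cancels the $|b|^{-1}$ and leaves only a uniformly bounded trigonometric multiplier in front. Applying Van der Corput to the $\sigma$-integral (phase $\mu b^{2}\sigma^{2}$ with second derivative $\sim\mu b^{2}$, amplitude of finite total variation) yields a factor $(1+\mu b^{2})^{-1/2}$, which is then converted into the announced $(1+2^{j}r_{1}r_{2})^{-1/2}$ by a case analysis comparing $|b|\mu^{1/2}$ to $1$. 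The main obstacle I foresee is the bookkeeping in the step for \eqref{kerDm3-est}: one must carefully separate the phase-error and amplitude-error contributions and check, uniformly in $(r_{1},r_{2},\theta_{1}-\theta_{2})$, that each resulting integrand has the structure required by Van der Corput, a computation of the same character as the one following \eqref{psi-2'1} but somewhat more involved because of the $\cosh(\alpha s)$ factor present in $\psi_{3}$ but absent from $\psi_{3,m}$.
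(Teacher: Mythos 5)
Your treatment of \eqref{kerDe3-est} is essentially the paper's argument, and the overall Van der Corput strategy is the right one, but the steps for \eqref{kerDm3-est} and \eqref{kerDH3-est} each contain a gap. For \eqref{kerDm3-est}, a plain Taylor expansion of the phase about $s=0$ does not reproduce $H$: the leading quadratic term of $\phi(s)-(r_1+r_2)$ is $\tfrac{r_1r_2}{2(r_1+r_2)}s^2$, whereas the phase in \eqref{def-H} is $r_1r_2\,s^2$, and these differ by the multiplicative constant $2(r_1+r_2)$, which cannot be treated as an $O(s^4)$ remainder. The paper resolves this by passing to a Morse coordinate $\tilde s=|g(s)|^{1/2}s$ (with $g(0)=\bar\varphi''(0)=1/(r_1+r_2)$), which makes the phase exactly quadratic; the Jacobian $ds/d\tilde s=(r_1+r_2)^{1/2}+O(r_1r_2\tilde s^2)$ is precisely what produces the otherwise unexplained factor $(r_1+r_2)^{1/2}$ in \eqref{def-H}. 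Your outline needs this change of variables (and a careful account of how the denominator $s^2/2+b^2$ transforms under it), not just a Taylor expansion.

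The more serious problem is \eqref{kerDH3-est}. After your substitution $s=|b|\sigma$ the model integral becomes
$\int_0^\infty\frac{b}{s^2+b^2}e^{2i\mu s^2}\,ds=\mathrm{sgn}(b)\int_0^\infty\frac{e^{i\mu b^2\sigma^2}}{\sigma^2+1}\,d\sigma$,
which depends on $\mu=2^jr_1r_2$ and $b$ only through the product $\mu b^2$; its size is therefore $\sim\min\big(1,(\mu b^2)^{-1/2}\big)$. No case analysis in $(|b|,\mu)$ can upgrade $(1+\mu b^2)^{-1/2}$ to $(1+\mu)^{-1/2}$, since for $|b|\lesssim\mu^{-1/2}$ (which occurs because $b=\sqrt 2\sin((\theta_1-\theta_2+\pi)/2)$ can be arbitrarily small as $\theta_1-\theta_2\to\pm\pi$) the left-hand side is $\sim1$. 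So the conversion step you propose is false, and Van der Corput alone does not give the uniform $(1+2^jr_1r_2)^{-1/2}$ decay. The paper does not argue this point either; it delegates \eqref{kerDH3-est} entirely to the estimate of the term $H$ in \cite[eq.~(35)]{BFM18}, where a different and more delicate analysis is used. You would need to reproduce that argument (or find a substitute), not rely on the scaling-plus-VdC route.
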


Using Lemma \ref{lem:ker-est12} and Proposition \ref{lem:kerlplq},  we obtain \eqref{est:TDjequ} with $\ell=3.$

\begin{proof}[{\bf The proof of Lemma \ref{lem:ker-est3}:}] We first prove \eqref{kerDe3-est}. Arguing similarly as Lemma \ref{lem:ker-est12},
it suffices to show
\begin{equation}
\begin{split}
\big|\partial_s \psi_{3, e}(r_1,r_2,\theta_1,\theta_2; s)\big|\lesssim 1,
\end{split}
\end{equation}
uniformly in $r_1,r_2,\theta_1,\theta_2$, when $r_1+r_2\sim 1$. For our purpose, we have
\begin{align}\nonumber
&\int_0^1\Big|\partial_s\Big[\Big(|{\bf n}|^{-1/2} a(2^j|{\bf n}|)\Big)\Big(\frac{\sin(\theta_1-\theta_2+\pi)\cosh(\alpha s)}{\cosh(s)-\cos(\theta_1-\theta_2+\pi)}
-\frac{\sin(\theta_1-\theta_2+\pi)}{\frac{s^2}2+2\sin^2\big(\tfrac{\theta_1-\theta_2+\pi}2\big)}\Big)\Big]\Big|\,ds\\\label{d-v2}
 \lesssim& 1,
\end{align}
and
\begin{align}\nonumber
&\int_0^1\Big|\partial_s\Big[\Big(|{\bf n}|^{-1/2} a(2^j|{\bf n}|)-(r_1+r_2)^{-\frac12}a(2^j(r_1+r_2))\Big)\frac{\sin(\theta_1-\theta_2+\pi)}{\frac{s^2}2+2\sin^2
\big(\tfrac{\theta_1-\theta_2+\pi}2\big)}\Big]\Big|\, ds\\\label{d-v3}
\lesssim& 1,
\end{align}
which will be verified in Lemma \ref{lem:vercond} in appendix.

We next prove \eqref{kerDm3-est}. We will use the the Morse Lemma to write the phase function
in term of quadratic formula via making variable change.
Let $$\bar{\varphi}(s)=\frac{\big(r_1^2+r_2^2+2r_1r_2\cosh s\big)^{\frac12}}{r_1r_2}-\frac{r_1+r_2}{r_1r_2},$$
then $\bar{\varphi}(0)=\bar{\varphi}'(0)=0$ and
$$\bar{\varphi}''(0)=\frac{1}{r_1+r_2}\neq 0.$$
Let $$g(s)=2\int_0^1(1-t)\bar{\varphi}''(ts)dt,$$
then we can write $$\bar{\varphi}(s)=\frac12g(s)s^2.$$
Note that $g(0)=\bar{\varphi}''(0)\neq 0$, we make the variable changing
$$\tilde{s}=|g(s)|^{\frac12}s.$$
Hence
\begin{equation}\label{d-ss}
\frac{d\tilde{s}}{ds}=(r_1+r_2)^{-\frac12}+O(r_1r_2s^2),\quad \frac{d{s}}{d\tilde{s}}=(r_1+r_2)^{\frac12}+O(r_1r_2\tilde{s}^2),
\end{equation}
and
$$\partial_{\tilde{s}}s\big|_{s=0}=|\bar{\varphi}''(0)|^{-\frac12}.$$
We write
$$\varphi(s)-(r_1+r_2)=r_1r_2\bar{\varphi}(s)=r_1r_2\tilde{s}^2,$$
by \eqref{d-ss}, hence
\begin{align}
&2^{-\frac j2}\beta(r_1+r_2) \int_0^\infty e^{i2^j(\phi(s)-(r_1+r_2))}\psi_{3, m}(s)\;ds\\\nonumber
&=2^{-\frac j2}\beta(r_1+r_2) \int_0^\infty e^{i2^jr_1r_2\tilde{s}^2}\big(\psi_{3, m}(\tilde{s})+O(r_1r_2\tilde{s}^2\big)\big((r_1+r_2)^{\frac12}+O(r_1r_2\tilde{s}^2)\big) d\tilde{s}.
\end{align}
By using Van der Corput Lemma \ref{lem:VCL} as before, as desired, the difference is bounded by
$$\big|e^{-i2^j(r_1+r_2)}\tilde{K}_{D,m}^{3,j}(r_1,r_2;\theta_1-\theta_2)-H(r_1,r_2;\theta_1-\theta_2)\big|\lesssim 2^{-\frac j2}\big(1+2^jr_1r_2\big)^{-\frac12}.$$

We finally prove \eqref{kerDH3-est}.
Recall
$b=\sqrt{2}\sin\big(\frac{\theta_1-\theta_2+\pi}2\big)$ and the definitions \eqref{def-H} and \eqref{psi-m}, by scaling, it suffices to show
\begin{equation}
\Big|\beta(r_1+r_2)  a(2^j(r_1+r_2)) \int_0^\infty e^{i2^{j+1}r_1r_2 s^2} \, \frac{b}{s^2+b^2}
\;ds\Big|\lesssim \big(1+2^jr_1r_2\big)^{-\frac12}.
\end{equation}
This is the same to term $H$ in \cite[(35)]{BFM18} so that it follows from the same argument.

\end{proof}

\section{Appendix}
In this appendix, we verify \eqref{d-v1}, \eqref{d-v2} and \eqref{d-v3}. In fact, we can prove
\begin{lemma}\label{lem:vercond}
Let $b=\sqrt 2\sin\theta,~\textbf{n}=(r_1+r_2,\sqrt{r_1r_2(\cosh s-1)}),$ and the nonnegative numbers $r_1,r_2$ such that $r_1+r_2\sim1.$ Then for $0< s\leq1$ and $|\alpha|\leq1,$ we have
\begin{align}\label{equ:paskcosht1123}
  \Big|\partial_s^k\Big(\frac{(e^{-s}-\cos(2\theta))\sinh(\alpha s)}{\cosh(s)-\cos(2\theta)}-\frac{(-s+b^2)(\alpha s)}{s^2/2+b^2}\Big)\Big|\lesssim 1,~k=0,1,\\\label{equ:paskcosht2123}
  \int_0^1\Big|\partial_s\Big[\Big(|\textbf{n}|^{-1/2}a(2^j|\textbf{n}|)\Big)\Big(\frac{\sin(2\theta)\cosh(\alpha s)}{\cosh(s)-\cos(2\theta)}-\frac{\sin(2\theta)}{s^2/2+b^2}\Big)\Big]\Big|ds\lesssim 1,
\end{align}
and
\begin{equation}\label{equ:paskcosht3123}
\int_0^1\Big|\partial_s\Big[\Big(|\textbf{n}|^{-1/2}a(2^j|\textbf{n}|)-(r_1+r_2)^{-\frac{1}{2}}a(2^j(r_1+r_2))\Big)
\Big(\frac{\sin(2\theta)}{s^2/2+b^2}\Big)\Big]\Big|ds\lesssim1,
\end{equation}
where $|a^{(k)}(r)|\lesssim r^{-k},~\forall\; k\geq0,r>0.$
\end{lemma}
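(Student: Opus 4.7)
\medskip

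\noindent\textbf{Strategy.} All three estimates are elementary Taylor analyses at $s=0$. The key observation is that in each difference $X-Y$ on the left of (\ref{equ:paskcosht1123})--(\ref{equ:paskcosht3123}), the second term $Y$ has been chosen as the leading model of $X$ precisely so as to cancel the apparent singularity coming from
$$\cosh s-\cos(2\theta)\geq c(s^2+b^2),\qquad s^2/2+b^2\geq c(s^2+b^2),$$
valid for $s\in[0,1]$ and $b=\sqrt 2\sin\theta\in[0,\sqrt 2]$, at one additional Taylor order beyond what is required for a pointwise bound. The relevant expansions that I would record at the outset are
\begin{align*}
 e^{-s}-\cos(2\theta)&=(-s+b^2)+\tfrac{s^2}{2}+O(s^3),\\
 \cosh s-\cos(2\theta)&=(s^2/2+b^2)+O(s^4),\\
 \sinh(\alpha s)&=\alpha s+O(|\alpha|^3 s^3),\qquad \cosh(\alpha s)=1+O(\alpha^2 s^2),\\
 \sin(2\theta)&=b\sqrt{2-b^2}=O(b).
\end{align*}

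\medskip

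For (\ref{equ:paskcosht1123}) the plan is to combine the two fractions over the common denominator $(\cosh s-\cos(2\theta))(s^2/2+b^2)\sim(s^2+b^2)^2$. Writing the resulting numerator as $N_1(D_2-D_1)+(N_1-N_2)D_1$ with $D_2-D_1=O(s^4)$ and $N_1-N_2=O(|\alpha|s^3)$, I would obtain a numerator of size $O\bigl(|\alpha|s^3(s^2+b^2)\bigr)$, hence the pointwise bound $O(|\alpha|s)$; one further application of the quotient rule handles the $k=1$ derivative with the same final size. For (\ref{equ:paskcosht2123}) I would write the bracket as $\sin(2\theta)[\cosh(\alpha s)C-B]/(BC)$ with $B=\cosh s-\cos(2\theta)$, $C=s^2/2+b^2$, and decompose $\cosh(\alpha s)C-B=(\cosh(\alpha s)-1)C+(C-B)$; this is $O(\alpha^2 s^2(s^2+b^2)+s^4)$, yielding the pointwise bound $O(b)$ for the bracket. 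The outer factor $|\textbf{n}|^{-1/2}a(2^j|\textbf{n}|)$ is uniformly bounded with $\partial_s$-derivative of size $O(s)$, since $\partial_s|\textbf{n}|=r_1r_2\sinh s/|\textbf{n}|=O(s)$ when $r_1+r_2\sim 1$ and $|a^{(k)}(r)|\lesssim r^{-k}$. Applying the product rule, the $L^1$-bound then follows from the uniform estimates
$$\int_0^1 \frac{b\,s}{s^2+b^2}\,ds\lesssim 1,\qquad \int_0^1 \frac{b\,s^3}{(s^2+b^2)^2}\,ds\lesssim 1.$$
For (\ref{equ:paskcosht3123}) the key identity is
$$|\textbf{n}|^{-1/2}a(2^j|\textbf{n}|)-(r_1+r_2)^{-1/2}a(2^j(r_1+r_2))=\int_0^s \partial_t\bigl[|\textbf{n}|^{-1/2}a(2^j|\textbf{n}|)\bigr]\,dt=O(r_1r_2\,s^2),$$
obtained from the fundamental theorem of calculus and the chain rule; this cancels the $1/(s^2+b^2)$ singularity of $\sin(2\theta)/(s^2/2+b^2)=O(b/(s^2+b^2))$ exactly, and the $L^1$-bound on the $s$-derivative follows from the same elementary integrals as above.

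\medskip

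\noindent\textbf{Main obstacle.} The estimates are computationally routine but delicate in bookkeeping: each differentiation of a denominator like $s^2/2+b^2$ nominally costs a factor $(s^2+b^2)^{-1}$, so the matching between the first and second fractions has to be verified at one extra Taylor order beyond the pointwise bound, uniformly in $b\in[0,\sqrt 2]$ and $|\alpha|\leq 1$. Once this matching is in place, every remaining integral reduces to the form $\int_0^1 s^k/(s^2+b^2)^m\,ds$ and is uniformly bounded in $b$.
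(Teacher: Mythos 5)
Your proposal is correct and uses the same essential mechanism as the paper — Taylor expansion of the numerators at $s=0$ to exhibit the cancellation at one order beyond what the pointwise bound alone demands, combined with the lower bound $\cosh s-\cos(2\theta)=(\cosh s-1)+b^2\geq c(s^2+b^2)$ and the uniform boundedness of $\int_0^1 s^k/(s^2+b^2)^m\,ds$ with $b=\sqrt2\sin\theta$. The organizational difference is that the paper first rewrites $\cosh s-\cos(2\theta)=2\sinh^2(s/2)+b^2$ and $\sin(2\theta)=O(b)$ to reduce the lemma to a cleaner form (Lemma 5.2), and then splits the proof into three cases ($b=0$; $b^2\geq\delta_0>0$; $0<b^2<\delta_0$, the last subdivided again by $s\lessgtr\delta_0$), running a separate Taylor computation in each regime; your proposal instead works uniformly in $b\in[0,\sqrt2]$ via the single algebraic decomposition $N_1D_2-N_2D_1=N_1(D_2-D_1)+(N_1-N_2)D_1$ and its analogues. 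The uniform route you take is cleaner in spirit and avoids the case analysis, at the cost of more delicate bookkeeping in the quotient-rule expansions, which your outline sketches but does not fully carry through (e.g.\ for $k=1$ in \eqref{equ:paskcosht1123} one must track that $P'=O(|\alpha|s^2(s^2+b^2))$ and $Q'/Q=O(s/(s^2+b^2))$, and for \eqref{equ:paskcosht2123} that $\partial_s(U/V)=O(\alpha^2 s/(s^2+b^2)+s^3/(s^2+b^2)^2)$ before multiplying by $\sin(2\theta)=O(b)$ and integrating). These computations do close, so the argument goes through; you should just spell them out rather than assert ``the same final size.''
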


The above lemma can be reduced to the following lemma with $b=\sqrt 2\sin\theta$.

\begin{lemma}\label{lem:redlemma}
Let $\textbf{n}=(r_1+r_2,\sqrt{r_1r_2(\cosh s-1)}),$ and the nonnegative numbers $r_1,r_2$ such that $r_1+r_2\sim1.$ Then for $0< s\leq1$, $|\alpha|\leq1$ and $0\leq b\leq 2,$ we have
\begin{align}\label{equ:paskcosht1}
  \Big|\partial_s^k\Big(\frac{(e^{-s}-1+b^2)\sinh(\alpha s)}{2\sinh^2(s/2)+b^2}-\frac{(-s+b^2)(\alpha s)}{s^2/2+b^2}\Big)\Big|\lesssim 1,~k=0,1,\\\label{equ:paskcosht2}
  \int_0^1\Big|\partial_s\Big[\Big(|\textbf{n}|^{-1/2}a(2^j|\textbf{n}|)\Big)\Big(\frac{b\cosh(\alpha s)}{2\sinh^2(s/2)+b^2}-\frac{b}{s^2/2+b^2}\Big)\Big]\Big|ds\lesssim 1,
\end{align}
and
\begin{equation}\label{equ:paskcosht3}
\int_0^1\Big|\partial_s\Big[\Big(|\textbf{n}|^{-1/2}a(2^j|\textbf{n}|)-(r_1+r_2)^{-\frac{1}{2}}a(2^j(r_1+r_2))\Big)
\Big(\frac{b}{s^2/2+b^2}\Big)\Big]\Big|ds\lesssim1,
\end{equation}
where $|a^{(k)}(r)|\lesssim r^{-k},~\forall\; k\geq0,r>0.$
\end{lemma}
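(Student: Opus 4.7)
The plan is to prove each of the three bounds \eqref{equ:paskcosht1}, \eqref{equ:paskcosht2}, and \eqref{equ:paskcosht3} via elementary Taylor expansion in $s$ near $0$, exploiting the fact that the denominator $s^2/2+b^2$ is bounded below by $\max(s^2/2,b^2)$ while the regular factors admit uniform remainder estimates. The workhorse bounds are $|e^{-s}-1+s|\lesssim s^2$, $|\sinh(\alpha s)-\alpha s|\lesssim s^3$ (uniform in $|\alpha|\leq 1$), $|\cosh(\alpha s)-1|\lesssim s^2$, $|2\sinh^2(s/2)-s^2/2|\lesssim s^4$, and $|\sinh s-s|\lesssim s^3$, all for $|s|\leq 1$.

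For \eqref{equ:paskcosht1}, I would set $N_1=(e^{-s}-1+b^2)\sinh(\alpha s)$, $D_1=2\sinh^2(s/2)+b^2$, $N_2=(-s+b^2)\alpha s$, $D_2=s^2/2+b^2$ and decompose
\begin{equation*}
\frac{N_1}{D_1}-\frac{N_2}{D_2}=\frac{N_1-N_2}{D_2}-\frac{N_1(D_1-D_2)}{D_1D_2}.
\end{equation*}
The Taylor bounds give $|N_1-N_2|\lesssim s^3(1+b^2)$, $|D_1-D_2|\lesssim s^4$, $|N_1|\lesssim s(s+b^2)$, and $D_1,D_2\geq s^2/2+b^2$. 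A short case split on whether $b\leq s$ or $b\geq s$ then yields the $k=0$ claim; the $k=1$ case follows by differentiating the same decomposition, since each extra $\partial_s$ only lowers a Taylor power by one and the denominators are differentiated into themselves times $\partial_s D_i=O(s)$.

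For \eqref{equ:paskcosht2}, I would factor the bracket as
\begin{equation*}
\frac{b\cosh(\alpha s)}{D_1}-\frac{b}{D_2}=\frac{b(\cosh(\alpha s)-1)}{D_1}+\frac{b(D_2-D_1)}{D_1D_2},
\end{equation*}
both summands being $O(s^2)$ in supremum thanks to the above Taylor estimates. Their $s$-derivatives produce integrals of the form $\int_0^1 bs/(s^2+b^2)\,ds$ and $\int_0^1 bs^3/(s^2+b^2)^2\,ds$, both uniformly bounded on $b\in[0,2]$ (either by direct substitution $u=s^2+b^2$ or by noting that the antiderivatives are of order $b\log(1+1/b^2)$, which vanishes as $b\to 0$ and decays as $1/b$ for $b\geq 1$). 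The prefactor $|\textbf{n}|^{-1/2}a(2^j|\textbf{n}|)$ is $O(1)$ with $O(s)$ derivative: using $\partial_s|\textbf{n}|=r_1r_2\sinh s/|\textbf{n}|\lesssim s$, $r_1+r_2\sim 1$, and the assumption $|a^{(k)}(r)|\lesssim r^{-k}$ (so that $2^j|a'(2^j|\textbf{n}|)|\lesssim |\textbf{n}|^{-1}\lesssim 1$), the product rule closes the estimate. For \eqref{equ:paskcosht3}, the identity
\begin{equation*}
|\textbf{n}|-(r_1+r_2)=\frac{2r_1r_2(\cosh s-1)}{|\textbf{n}|+r_1+r_2}=O(s^2),
\end{equation*}
together with the smoothness of $r\mapsto r^{-1/2}a(2^jr)$ on $r\sim 1$, shows that $f(s):=|\textbf{n}|^{-1/2}a(2^j|\textbf{n}|)-(r_1+r_2)^{-1/2}a(2^j(r_1+r_2))$ satisfies $|f(s)|\lesssim s^2$ and $|f'(s)|\lesssim s$. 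Paired with $g(s)=b/(s^2/2+b^2)$ (for which $|g|\leq 1$ and $|g'|\lesssim bs/(s^2+b^2)^2$), one has $\int_0^1|f'g|\,ds\lesssim \int_0^1 s\cdot b/(s^2+b^2)\,ds=O(1)$ and $\int_0^1|fg'|\,ds\lesssim \int_0^1 bs^3/(s^2+b^2)^2\,ds=O(1)$, uniformly in $b\in[0,2]$, giving the claim.

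I expect the main obstacle to be bookkeeping rather than conceptual: the derivatives of $b/(s^2/2+b^2)$ are not uniformly integrable as $b\to 0$, and the desired bound only emerges after pairing them against factors vanishing to order $s^2$ at $s=0$. Particularly in the $k=1$ case of \eqref{equ:paskcosht1} several Taylor-remainder terms must be controlled simultaneously and the case split $b\lessgtr s$ must be applied consistently to each. No deep new idea beyond Taylor expansion and elementary integration is needed.
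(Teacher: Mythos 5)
Your proposal is correct and is in essence the same Taylor-expansion argument the paper uses, but your bookkeeping is cleaner. The paper proves the lemma by splitting into three cases on $b$ (namely $b=0$, $b^2\geq\delta_0$, and $0<b^2<\delta_0$), and in the last case it puts each difference of fractions over a common denominator and matches Taylor orders of $A_1/A_2$, $B_1/B_2$ by brute force. Your decomposition
\begin{equation*}
\frac{N_1}{D_1}-\frac{N_2}{D_2}=\frac{N_1-N_2}{D_2}-\frac{N_1(D_1-D_2)}{D_1D_2}
\end{equation*}
handles all $b$ simultaneously and reduces the work to the bounds $|N_1-N_2|\lesssim s^3(1+b^2)$, $|D_1-D_2|\lesssim s^4$, $|N_1|\lesssim s(s+b^2)$, together with the short split on whether $b\leq s$ or $b\geq s$; this is a genuinely tidier route to the same estimate and is worth adopting. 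For \eqref{equ:paskcosht2}--\eqref{equ:paskcosht3} the two treatments are essentially identical; the paper reduces to the pointwise bound $|\partial_s^k(\cdots)|\lesssim (s^2+b^2)^{-1}$ and the integrable majorant $\int_0^1 b\,(s^2+b^2)^{-1}\,ds\lesssim 1$, whereas you estimate each derivative term and integrate directly, arriving at the same integrals $\int_0^1 bs\,(s^2+b^2)^{-1}\,ds$ and $\int_0^1 bs^3(s^2+b^2)^{-2}\,ds$.

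Two small points to fix in the write-up: first, the two summands of your splitting of the bracket in \eqref{equ:paskcosht2} are $O(s)$ uniformly in $b$, not $O(s^2)$; this is harmless since you only need them bounded. Second, the claim ``$|g|\leq 1$'' for $g(s)=b/(s^2/2+b^2)$ is false when $b<1$ (indeed $g(0)=1/b$); what you actually use, and what is true, is that $g$ is integrable with $\int_0^1 g\,ds\lesssim 1$ and that the factors paired against $g$ or $g'$ vanish to order $s$ and $s^2$ respectively, which is exactly what closes the estimate. Apart from these cosmetic corrections and the usual caveat that the $k=1$ case of \eqref{equ:paskcosht1} should have its three resulting pieces written out (the term $N_1(D_1-D_2)\,\partial_s\big((D_1D_2)^{-1}\big)$ needs the full bound $|N_1||D_1-D_2|\cdot s\,(s^2/2+b^2)^{-3}$, which you can check is $\lesssim 1$ by the same $b\lessgtr s$ split), the argument is complete.
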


\begin{proof}
{\bf Case 1: $b=0$.}
In this case,  \eqref{equ:paskcosht2} and \eqref{equ:paskcosht3} are obvious, and
\[\partial_s^k\Big(\frac{(e^{-s}-1+b^2)\sinh(\alpha s)}{2\sinh^2(s/2)+b^2}-\frac{(-s+b^2)(\alpha s)}{s^2/2+b^2}\Big)=\frac{(e^{-s}-1)\sinh(\alpha s)}{2\sinh^2(s/2)}+2\alpha.\]
Noting that
\begin{align}\label{equ:easyfact1}
  |e^{-s}-1|\leq 2|s|,~|\sinh(s)|\sim |s|,\quad \forall\;|s|\leq 1,
\end{align}
we obtain
\begin{equation}\label{equ:alphsin1e}
  \Big|\frac{(e^{-s}-1)\sinh(\alpha s)}{2\sinh^2(s/2)}+2\alpha\Big|\lesssim1.
\end{equation}

Next, we turn to estimate the term
\begin{equation}\label{equ:paresalpter1}
\Big|\partial_s\Big(\frac{(e^{-s}-1)\sinh(\alpha s)}{2\sinh^2(s/2)}+2\alpha\Big)\Big|=\Big|\partial_s\Big(\frac{(e^{-s}-1)\sinh(\alpha s)}{\cosh(s)-1}+2\alpha\Big)\Big|.
\end{equation}
By a simple computation, we have
\begin{align*}
&\partial_s\Big(\frac{(e^{-s}-1)\sinh(\alpha s)}{\cosh(s)-1}+2\alpha\Big)\\
=&\frac{[-e^{-s}\sinh(\alpha s)+\alpha(e^{-s}-1)\cosh(\alpha s)](\cosh(s)-1)-(e^{-s}-1)\sinh(\alpha s)\sinh(s)}{(\cosh(s)-1)^2}.
\end{align*}
Using the Taylor expansion, we can write the above numerator as
\[
\begin{aligned}
&\Big[-\big(1-s+\frac{s^2}{2!}+o(s^2)\big)\big(\alpha s+\frac{(\alpha s)^3}{3!}+o(s^3)\big)\\
&\qquad+\alpha\big(-s+\frac{s^2}{2!}-\frac{s^3}{3!}+o(s^3)\big)\big(1+\frac{s^2}{2!}+o(s^3)\big)\Big]\big(\frac{s^2}{2!}+\frac{s^4}{4!}+o(s^3)\big)\\
&\qquad-\big(-s+\frac{s^2}{2!}-\frac{s^3}{3!}+o(s^3)\big)\big(\alpha s+\frac{(\alpha s)^3}{3!}+o(s^4)\big)\big(s+\frac{s^3}{3!}+o(s^4)\big)\\
=&\big(-\alpha s^3+\frac{3\alpha}{4}s^4+o(s^4)\big)-\big(-\alpha s^3+\frac{\alpha}{2}s^4+o(s^4)\big)\\
=&\frac{\alpha}{4}s^4+o(s^4),
\end{aligned}
\]
which shows that the term \eqref{equ:paresalpter1} is bounded. Thus,  we finish the proof of the case $b=0$.

{\bf Case 2: $b^2\geq\delta_0>0$.}
By a simple computation, for $k=0,1,$ we have
\begin{align*}
  \Big|\partial_s^k\Big(\frac{(e^{-s}-1+b^2)\sinh(\alpha s)}{2\sinh^2(s/2)+b^2}-\frac{(-s+b^2)(\alpha s)}{s^2/2+b^2}\Big)\Big|\lesssim& \frac{1}{\delta_0},\\
  \Big|\partial_s^k\Big(\frac{b\cosh(\alpha s)}{2\sinh^2(s/2)+b^2}-\frac{b}{s^2/2+b^2}\Big)\Big|\lesssim&\frac{1}{\delta_0},\\
  \Big|\partial_s^k\Big(\frac{b}{s^2/2+b^2}\Big)\Big|\lesssim&\frac{1}{\delta_0}.
\end{align*}
On the other hand, writing $r=2^j|\textbf{n}|,$  one has
\begin{equation}\label{equ:ajbfnest}
\begin{aligned}
&\partial_s(|\textbf{n}|^{-1/2}a(2^j|\textbf{n}|))\\
=&[-\frac{1}{2}a(2^j|\textbf{n}|)|\textbf{n}|^{-5/2}+|\textbf{n}|^{-3/2}(2^j\partial_r(2^j|\textbf{n}|))]\cdot r_1r_2\sinh(s).
\end{aligned}
\end{equation}
By the assumption $r_1+r_2\sim1,~r_1,r_2\geq0$ and the properties of $a(r),$ it is obvious that $|\partial_s(|\textbf{n}|^{-1/2}a(2^j|\textbf{n}|))|$ is bounded. Thus, one can obtain the case $b^2\geq\delta_0$ of the lemma.

{\bf Case 3: $0<b^2<\delta_0\ll1.$}
 Whereas the case $s\geq\delta_0$ is similar to $b^2\geq \delta_0,$ we can obtain the same conclusions as Case 2. Therefore, we may assume that $s,b^2<\delta_0.$

We first prove \eqref{equ:paskcosht1}.
 For $k=0,$ by \eqref{equ:easyfact1}, we get
 $$\Big|\frac{(e^{-s}-1+b^2)\sinh(\alpha s)}{2\sinh^2(s/2)+b^2}-\frac{(-s+b^2)(\alpha s)}{s^2/2+b^2}\Big|\lesssim \frac{(s+b^2)s}{s^2+b^2}\lesssim1.$$
 %

For $k=1,$ we have
\[
\begin{aligned}
&\partial_s\Big(\frac{(e^{-s}-1+b^2)\sinh(\alpha s)}{2\sinh^2(s/2)+b^2}-\frac{(-s+b^2)(\alpha s)}{s^2/2+b^2}\Big)\\
=&\partial_s\Big(\frac{(e^{-s}-1+b^2)\sinh(\alpha s)}{2\sinh^2(s/2)+b^2}-\frac{(-s+b^2)(\alpha s)}{s^2/2+b^2}\Big)\\
=&\frac{-e^{-s}\sinh(\alpha s)+\alpha(e^{-s}-1+b^2)\cosh(\alpha s)}{2\sinh^2(s/2)+b^2}-\sinh(s)\cdot\frac{(e^{-s}-1+b^2)\sinh(\alpha s)}{(2\sinh^2(s/2)+b^2)^2}\\
&-\frac{\alpha(b^2-2s)}{s^2/2+b^2}+\frac{\alpha(-s+b^2)s^2}{(s^2/2+b^2)^2}\\
\triangleq&A+B,
\end{aligned}
\]
where
\begin{align*}
 A=&\frac{-e^{-s}\sinh(\alpha s)+\alpha(e^{-s}-1+b^2)\cosh(\alpha s)}{2\sinh^2(s/2)+b^2}-\frac{\alpha(b^2-2s)}{s^2/2+b^2},\\
 B=&\sinh(s)\cdot\frac{(e^{-s}-1+b^2)\sinh(\alpha s)}{(2\sinh^2(s/2)+b^2)^2}+\frac{\alpha(-s+b^2)s^2}{(s^2/2+b^2)^2}.
\end{align*}
We will prove that $A$ and $B$ are bounded respectively by using the Taylor expansion as follows.

Write $\displaystyle A=\frac{A_1}{A_2}$ with
\begin{align*}
A_1&=\big[-e^{-s}\sinh(\alpha s)+\alpha(e^{-s}-1+b^2)\cosh(\alpha s)\big](s^2/2+b^2)-\alpha(-s+b^2)s^2\big(2\sinh^2(s/2)+b^2\big)\\
   &=\Big[-\big(1-s+\frac{s^2}{2}-\frac{s^3}{3!}+\frac{s^4}{4!}+o(s^4)\big)\big((\alpha s)+\frac{(\alpha s)^3}{3!}+o(s^4)\big)\\
&\qquad+\alpha\big(-s+\frac{s^2}{2}-\frac{s^3}{3!}+\frac{s^4}{4!}+o(s^4)+b^2\big)\big(1+\frac{(\alpha s)^2}{2}+\frac{(\alpha s)^4}{4!}+o(s^5)\big)\Big]\\
&\qquad~~\times\big(\frac{s^2}{2}+b^2\big)-\alpha(-s+b^2)s^2\big[\frac{s^2}{2}+\frac{s^4}{4!}+o(s^5)+b^2\big],
\end{align*}
\begin{align*}
A_2&=(2\sinh^2(s/2)+b^2)(s^2/2+b^2)\\
 &=\big[\frac{s^2}{2}+\frac{s^4}{4!}+o(s^5)+b^2\big]\big[\frac{s^2}{2}+b^2\big]\\
 &=\frac{s^4}{4}+s^2b^2+b^4+o(s^5).
\end{align*}
Write  $\displaystyle B=\frac{B_1}{B_2}$ with
\begin{align*}
B_1=&-\sinh(s)(e^{-s}-1+b^2)\sinh(\alpha s)(s^2/2+b^2)^2+\alpha(-s+b^2)s^2(2\sinh^2(s/2)+b^2)^2\\
=&-\Big(s+\frac{s^3}{3!}+o(s^4)\Big)\Big(-s+\frac{s^2}{2}-\frac{s^3}{3!}+\frac{s^4}{4!}+o(s^4)+b^2\Big)\Big((\alpha s)+\frac{(\alpha s)^3}{3!}+o(s^4)\Big)\Big(\frac{s^2}{2}+b^2\Big)^2\\
&+\alpha(-s+b^2)s^2\Big(\frac{s^2}{2}+\frac{s^4}{4!}+o(s^5)+b^2\Big)^2,\\
B_2=&(2\sinh^2(s/2)+b^2)^2(s^2/2+b^2)^2\\
=&\Big(\frac{s^2}{2}+\frac{s^4}{4!}+o(s^5)+b^2\Big)^2\cdot\Big(\frac{s^2}{2}
+b^2\Big)^2.
\end{align*}
Since $A_1,A_2$ and $B_1,B_2$  both have the same order about $s$ or $b$ respectively, we can deduce that $A$ and $B$ are bounded.

\vskip 0.1in
Now, we turn to prove \eqref{equ:paskcosht2}. Noting that
\begin{align}\nonumber
&\int_0^1\Big|\partial_s\Big[\Big(|\textbf{n}|^{-1/2}a(2^j|\textbf{n}|)\Big)\Big(\frac{b\cosh(\alpha s)}{2\sinh^2(s/2)+b^2}-\frac{b}{s^2/2+b^2}\Big)\Big]\Big|ds\\\nonumber
=&\int_0^1\Big|\partial_s\Big(|\textbf{n}|^{-1/2}a(2^j|\textbf{n}|)\Big)\cdot\Big(\frac{b\cosh(\alpha s)}{2\sinh^2(s/2)+b^2}-\frac{b}{s^2/2+b^2}\Big)\\\label{equ:paskcterm21}
&\quad +\Big(|\textbf{n}|^{-1/2}a(2^j|\textbf{n}|)\Big)\cdot\partial_s\Big(\frac{b\cosh(\alpha s)}{2\sinh^2(s/2)+b^2}-\frac{b}{s^2/2+b^2}\Big)\Big|ds
\end{align}
and using \eqref{equ:ajbfnest} and the fact that $|\textbf{n}|^{-1/2}a(2^j|\textbf{n}|)$ is bounded, and
\begin{equation}\label{equ:simfactint1}
\Big|\int_0^1\frac{b}{s^2+b^2}ds\Big|\lesssim1,
\end{equation} we only need to show
\begin{equation}\label{equ:cas31sk01}
\Big|\partial_s^k\Big(\frac{\cosh(\alpha s)}{2\sinh^2(s/2)+b^2}-\frac{1}{\frac{s^2}{2}+b^2}\Big)\Big|\lesssim \frac{1}{s^2+b^2},\quad k=0,1.
\end{equation}
It is easy to show the case $k=0$.

For $k=1,$ by a direct computation, we get
\begin{align}\nonumber
&\partial_s\Big(\frac{\cosh(\alpha s)}{2\sinh^2(s/2)+b^2}-\frac{1}{\frac{s^2}{2}+b^2}\Big)\\\nonumber
=&\frac{\alpha\sinh(\alpha s)}{2\sinh^2(s/2)+b^2}-\frac{2\cosh(\alpha s)\sinh(s/2)\cosh(s/2)}{[2\sinh^2(s/2)+b^2]^2}+\frac{s}{(\frac{s^2}{2}+b^2)^2}\\\label{equ:parfrscsinter1}
\triangleq& I_1-I_2+I_3.
\end{align}
It is easy to check that $$|I_1|\lesssim  \frac{1}{s^2+b^2}.$$
On the other hand,
\begin{align*}
   & |I_3-I_2| \\
  \leq & \Big|\frac{s}{(\frac{s^2}{2}+b^2)^2}-\frac{s}{[2\sinh^2(\frac{s}2)+b^2]^2}\Big|+\Big|\frac{s}{[2\sinh^2(\frac{s}2)+b^2]^2}-\frac{2\cosh(\alpha s)\sinh(\frac{s}2)\cosh(\frac{s}2)}{[2\sinh^2(\frac{s}2)+b^2]^2}\Big| \\
  \leq & s\frac{2[2\sinh^2(\frac{s}2)+2b^2+\frac{s^2}{2}]\cdot[\sinh(\frac{s}2)+\frac{s}2]\cdot|\sinh(\frac{s}2)-\frac{s}2|}
  {(\frac{s^2}{2}+b^2)^2[2\sinh^2(\frac{s}2)+b^2]^2}\\
  &+\frac{|s-2\cosh(\alpha s)\sinh(\frac{s}2)\cosh(\frac{s}2)|}{[2\sinh^2(\frac{s}2)+b^2]^2}\\
  \lesssim& \frac{1}{s^2+b^2}.
\end{align*}
Hence, we derive \eqref{equ:cas31sk01} with $k=1$.

\vskip 0.1in

Finally, we show \eqref{equ:paskcosht3}.
 By \eqref{equ:ajbfnest} and \eqref{equ:simfactint1}, we only need to consider the term
\begin{equation}\label{equ:remscase332term}
\int_0^1\Big|\Big(|\textbf{n}|^{-1/2}a(2^j|\textbf{n}|)-(r_1+r_2)^{-\frac{1}{2}}a(2^j(r_1+r_2))\Big)
\partial_s\Big(\frac{b}{s^2/2+b^2}\Big)\Big|ds.
\end{equation}
Since
\begin{align*}
&(r_1+r_2)^{-\frac{1}{2}}-|\textbf{n}|^{-1/2}\\
=&\frac{|\textbf{n}|^2-(r_1+r_2)^2}{(r_1+r_2)^{\frac{1}{2}}|\textbf{n}|^{1/2}((r_1+r_2)^{\frac{1}{2}}+|\textbf{n}|^{1/2})(r_1+r_2+|\textbf{n}|)}\\
=&\frac{2\sinh^2(\frac{s}{2})r_1r_2}{(r_1+r_2)^{\frac{1}{2}}|\textbf{n}|^{1/2}((r_1+r_2)^{\frac{1}{2}}+|\textbf{n}|^{1/2})(r_1+r_2+|\textbf{n}|)}\\
\lesssim&s^2,
\end{align*}
we get
\begin{align*}
   &\Big||\textbf{n}|^{-1/2}a(2^j|\textbf{n}|)-(r_1+r_2)^{-\frac{1}{2}}a(2^j(r_1+r_2))\Big| \\
  \leq & \Big|(r_1+r_2)^{-\frac{1}{2}}-|\textbf{n}|^{-1/2}\Big||a(2^j|\textbf{n}|)|+(r_1+r_2)^{-\frac{1}{2}}\big|a(2^j|\textbf{n}|)-
   a(2^j(r_1+r_2))\big|\\
  \lesssim & s^2+ \big||\textbf{n}|-(r_1+r_2)\big|\lesssim s^2.
\end{align*}
Plugging this into \eqref{equ:remscase332term} implies
\begin{align*}
  &\int_0^1\Big|\Big(|\textbf{n}|^{-1/2}a(2^j|\textbf{n}|)-(r_1+r_2)^{-\frac{1}{2}}a(2^j(r_1+r_2))\Big)
\partial_s\Big(\frac{b}{s^2/2+b^2}\Big)\Big|ds\\
\lesssim& \int_0^1 s^2 \frac{bs}{(s^2/2+b^2)^2}\;ds\lesssim \int_0^1 \frac{b}{s^2/2+b^2}\;ds\lesssim1.
\end{align*}
And so  \eqref{equ:paskcosht3} holds. This concludes the proof of the lemma.

\end{proof}

\begin{center}

\end{center}


\begin{thebibliography}{99}

\bibitem{AB}
{ Y. Aharonov, and D. Bohm}, Significance of electromagnetic potentials in quantum theory,
\textit{Phys. Rev. Lett.} {\bf 115} (1959), 485-491.
%

\bibitem{BB}
 C.M. Bender, and S. Boettcher,
Real Spectra in Non-Hermitian Hamiltonians Having $\mathcal{PT}$-Symmetry,
\textit{Phys. Rev. Lett.} {\bf 80} (1998), 5243--5246.



\bibitem{BFM18}  M. D. Blair, G. A. Ford, and J. L. Marzuola, $L^p$-bounds on spectral clusters associated to polygonal domains,
  \textit{Revista Matem\'atica Iberoamericana}  34 (2018), 1071-1091.
  
  \bibitem{BC} S. B\"oglie, J.-C. Cuenin, Counterexample to the Laptev-Safronov Conjecture, https://arxiv.org/abs/2109.06135

 \bibitem{BDEL}
 D. Bonheure, J. Dolbeault, M. Esteban, and M. Loss, Inequalities involving Aharonov–Bohm magnetic potentials in dimensions 2 and 3, \textit{Rev. Math. Phys.} {\bf 33} (2021),  2150006.

\bibitem{BM1} J.M. Bouclet and H. Mizutani, Uniform resolvent and Strichartz estimates for Schr\"odinger equations with scaling critical potentials,
\textit{Trans. Amer. Math. Soc.} 370 (2018), 7293-7333.




\bibitem{BPST} N. Burq, F. Planchon, J. G. Stalker, A. S. Tahvildar-Zadeh, Strichartz estimates for the wave and {S}chr\"odinger equations with potentials of critical decay,
\textit{Indiana Univ. Math. J.} 53 (2004), 1665-1680.



\bibitem{CS}
A. Carbery, and F. Soria, Almost everywhere convergence of Fourier integrals for functions in Sobolev spaces and an $L^2$-localization principle, \textit{Revista Matem\'atica Iberoamericana} 4(1988), 319-337.



%

\bibitem{CFK} L. Cossetti, L. Fanelli, and D. Krejcirik, Absence of eigenvalues of Dirac and Pauli Hamiltonians via the method of multipliers, \textit{Comm. Math. Phys.} {\bf 379} (2020), 633-691.

\bibitem{CK1}  D. Colton and R. Kress, Inverse acoustic and electromagnetic scattering theory, volume 93 of Applied Mathematical Sciences. Springer-Verlag, Berlin, 1992.

\bibitem{CK16}  J. C. Cuenin and C. E. Kenig, $L^p$ resolent estimates for magnetic Schr\"odinger operators with unbounded background
fields, \textit{Comm. PDE,} 42(2017), 235-260.

\bibitem{Ev}  G. Ev\'equoz, Existence and asymptotic behavior of standing waves of the nonlinear Helmholtz equation in the plane,
\textit{Analysis} (Berlin) 37 (2017), 55–68.



%
%
%
\bibitem{FKV2}
L. Fanelli, D. Krejcirik,  and L. Vega,
Absence of eigenvalues of two-dimensional magnetic Schr\"odinger operators,
\textit{J. Funct. Anal.} {\bf 275} (2018), 2453-2472.


\bibitem{FZZ} L. Fanelli, J. Zhang and J. Zheng,
\newblock Dispersive estimates for 2D-wave equations with critical potentials,
\newblock arXiv:2003.10356v3.

\bibitem{Fr} R. L. Frank, Eigenvalue bounds for Schr\"odinger operators with
complex potentials, \textit{Bull. London Math. Soc.}, 43 (2011), 745–750.

\bibitem{Fr2} R. L. Frank, Eigenvalue bounds for Schrödinger operators with complex potentials. III. \textit{Trans. Amer. Math. Soc.} {\bf 370} (2018), no. 1, 219 - 240.

\bibitem{FS} R. L. Frank, B. Simon, Eigenvalue bounds for Schrödinger operators with complex potentials. II. \textit{J. Spectr. Theory} {\bf 7} (2017), no. 3, 633 - 658

\bibitem{GWZZ} X. Gao, J. Wang, J. Zhang and J. Zheng, Uniform resolvent estimates for Schr\"odinger operators in Aharonov-Bohm magnetic fields,
\textit{Journal of Diff. Equ.}, 292(2021), 70-89.

\bibitem{GYZZ} X. Gao, Z. Yin, J. Zhang and J. Zheng,
\newblock Decay and Strichartz estimates  in critical electromagnetic fields,
\newblock arXiv:2003.03086

\bibitem{Gar1}  A. Garcia,  $L^p$-$L^q$ estimates for electromagnetic Helmholtz equation. \textit{J. Math. Anal. Appl.} 384(2011), 409-420.

\bibitem{Gar2}  A. Garcia,  $L^p$-$L^q$ estimates for electromagnetic Helmholtz equation. Singular potentials,
\textit{ J. Funct. Anal.} 268(2015), 2787-2819.


\bibitem{GS} M. Goldberg and W. Schlag, A limiting absorption principle for the three-dimensional
Schr\"odinger equation with $L^p$ potentials, \textit{Int. Math. Res. Not.} 75 (2004), 4049-4071.


\bibitem{Gut}
S. Guti\'errez, Non trivial $L^q$ solutions to the Ginzburg-Landau equation, \textit{Math. Ann.} 328 (2004), 1-25.

\bibitem{HK}
M. Hansmann, and D. Krejcirik, The abstract Birman-Schwinger principle and spectral stability,  arXiv:2010.15102, to appear in \textit{J. Anal. Math.}


\bibitem{Hor} L. H\"ormander, Oscillatory integrals and multipliers on $FL^p$, Ark. Mat. 11 (1973) 1-11.

%
%
%
%


\bibitem{JK} A. Jensen and T. Kato, Spectral properties of Schr\"odinger operators and time-decay of the wave functions, \textit{Duke Math. J.} 46 (1979), 583-611.

\bibitem{JSS} J. L. Journ\'e, A. Soffer, C. D. Sogge, Decay estimates for Schr\"odinger operators, \textit{Comm. Pure Appl. Math.} 44 (1991), 573-604.

\bibitem{Kato} T. Kato. Wave operators and similarity for some non-selfadjoint operators. \textit{Math. Ann.}, 162(1965), 258-279.
%
%
%

\bibitem{KY} T. Kato and K. Yajima, Some examples of smooth operators and the associated smoothing effect. \textit{Rev. Math. Phys.}, 1(1989), 481-496.
%


\bibitem{KRS} C. E. Kenig, A. Ruiz, C. D. Sogge, Uniform Sobolev inequalities and unique continuation for
second order constant coefficient differential operators, \textit{Duke Math. J.} 55(1987), 329-347.
%


\bibitem{KL} Y. Kwon and S. Lee, Sharp resolvent estimates outside of the uniform boundedness range, \textit{Comm. Math. Phys.},
374(2020), 1417-1467. 

\bibitem{Le} N. N. Lebedev, Special functions and their applications. Dover Publications Inc., New York, 1972. Revised edition, translated from the Russian and edited by Richard A. Silverman, Unabridged and corrected republication.

\bibitem{LS}  A. Laptev and O. Safronov, Eigenvalue estimates for Schr\"odinger operators with complex
potentials. \textit{Comm. Math. Phys.}, 292(2009), 29–54.

\bibitem{LW} A. Laptev and T. Weidl, Hardy inequalities for magnetic Dirichlet forms. In: Mathematical results in quantum mechanics (Prague
1998), Oper. Theory Adv. Appl. Vol. 108, Basel:
Birkh$\ddot{a}$user, 1999, 299-305.




\bibitem{Miz} H. Mizutani, Uniform Sobolev estimates for Schr\"odinger operators with scaling-critical potentials and applications, \textit{Anal. PDE}, 13(5)(2020), 1333-1369.


\bibitem{MZZ} H. Mizutani, J. Zhang and J. Zheng, Uniform resolvent estimates for Schr\"odinger operator with an inverse-square potential, \textit{J. Funct. Anal.}, 278(2020), 108350.

%


\bibitem{RS}  I. Rodnianski and W. Schlag, Time decay for solutions of Schr\"odinger equations with rough and time-dependent potentials, \textit{Invent. Math.}, 155(2004):451-513.

\bibitem{RV} A. Ruiz, L. Vega, On local regularity of Schr\"odinger equations, \textit{Int. Math. Res. Not.} (1993) 13-27.

\bibitem{SGY}
F.G. Scholtz, H.B. Geyer,  and F.J.W. Hahne,
Quasi-Hermitian operators in quantum mechanics and the variational principle, \textit{Ann. Phys.} {\bf 213} (1992), 74--101.

\bibitem{Sogge} C. D. Sogge, Oscillatory integrals and spherical harmonics, \textit{Duke Math. J.} 53(1986), 43-65.

\bibitem{Stein-W} C E. M. Stein and G. Weiss. Introduction to Fourier analysis on Euclidean spaces. Princeton University Press, Princeton, N.J., 1971. Princeton Mathematical Series, No. 32.

\bibitem{Stein} E.M. Stein, Oscillatory integrals in Fourier analysis, in: Beijing Lectures in Harmonic Analysis, Beijing, 1984, in: Ann. of Math. Stud., vol. 112, Princeton Univ. Press, Princeton, NJ, 1986, pp. 307–355.

%




%
%
%
%

%
%
%
%
%
%

%
%



%
%
%
%
%
%


%
%
%
%
%
%
%
%
%
%
%
%
%
%
%

\end{thebibliography}
\end{document}